\newcommand{\B}{{\mathcal B}}
\newcommand{\transp}{{\scriptscriptstyle \top}}
\newtheorem{thm}{\bf{Theorem}}[section]
\newtheorem{lemma}[thm]{\bf{Lemma}}
\newtheorem{prop}[thm]{\bf{Proposition}}
\newtheorem{rem}[thm]{\bf{Remark}}
\begin{document}

\title[Convergence of SBDM for risk-averse stochastic convex programs]{Convergence analysis of sampling-based decomposition methods for risk-averse multistage stochastic convex programs}

\author{Vincent Guigues}

\address{Vincent Guigues:
            FGV/EMAp, Praia de Botafogo, Rio de Janeiro, Brazil, {\tt vguigues@fgv.br}}

\begin{abstract} 
We consider a class of sampling-based decomposition
methods to solve risk-averse multistage stochastic convex programs.
We prove a formula for the computation of the cuts
necessary to build the outer linearizations of the recourse functions.
This formula can be used to obtain an efficient implementation of Stochastic Dual Dynamic Programming applied to convex nonlinear problems.
We prove the almost sure convergence of these decomposition methods when the relatively complete
recourse assumption holds. We also prove the almost sure convergence of these algorithms when applied
to risk-averse multistage stochastic linear programs that do not satisfy the relatively complete
recourse  assumption. The analysis is first done assuming the underlying stochastic process
is interstage independent and discrete, with a finite set of possible realizations at each stage.
We then indicate two ways of extending the methods and convergence analysis to the case
when the process is interstage dependent. 
\end{abstract}

\keywords{Stochastic programming \and  Risk-averse optimization \and Decomposition algorithms \and Monte Carlo sampling \and Relatively complete recourse \and SDDP}

\maketitle 

\par AMS subject classifications: 90C15, 90C90.

\section{Introduction}

Multistage stochastic convex optimization models have become a standard tool
to deal with a wide range of engineering problems in which one has to make
a sequence of decisions, subject to random costs and constraints, that arise from observations of a stochastic process.
Decomposition methods are popular solution methods to solve such problems.
These algorithms are based on dynamic programming equations and 
build outer linearizations of the recourse functions, assuming that the
realizations of the stochastic process over the optimization period can be
represented by a finite scenario tree.
Exact decomposition methods such as the Nested Decomposition (ND) algorithm 
\cite{birge-louv-book}, \cite{birgemulti}, compute cuts at each iteration for
the recourse functions at all the nodes of the scenario tree.
However, in some applications, the number of scenarios may become so large that these
exact methods entail prohibitive computational effort. 

Monte Carlo sampling-based algorithms constitute an interesting alternative in such
situations. For multistage stochastic
linear programs (MSLP) whose number of immediate descendant nodes is small but with many
stages, Pereira and Pinto \cite{pereira} propose to sample in the forward pass of
the ND. This sampling-based variant of the ND is the so-called Stochastic Dual Dynamic
Programming (SDDP) algorithm, which has been the object of several
recent improvements and extensions \cite{shapsddp}, \cite{philpmatos}, \cite{guiguesrom10}, \cite{guiguesrom12}, \cite{guiguescoap2013},
\cite{kozmikmorton}.

In this paper, we are interested in the convergence of SDDP and related algorithms for risk-averse
multistage stochastic convex programs (MSCP).
A convergence proof of an enhanced variant of SDDP, the Cutting-Plane and Partial-Sampling (CUPPS) algorithm,
was given in \cite{chenpowell99} for risk-neutral multistage stochastic linear programs with uncertainty in the right-hand side only.
For this type of problems, the proof was later extended to a larger class of algorithms in  \cite{philpot2}, \cite{philpot}.
These proofs are directly applicable to show the convergence of SDDP applied to the risk-averse models introduced in
\cite{guiguesrom10}.
Finally, more recently, Girardeau et al. proved the convergence of a class of sampling-based
decomposition methods to solve some risk-neutral multistage stochastic convex programs \cite{lecphilgirar12}.
We extend this latter analysis in several ways:
\begin{itemize}
\item[(A)] The model is risk-averse, based on dynamic programming equations expressed in terms
of conditional coherent risk functionals.
\item[(B)] Instead of using abstract sets, the dynamic constraints are expressed using
equality and inequality constraints, a formulation needed when the algorithm is implemented
for a real-life application. Regarding the problem formulation, the dynamic
constraints also depend on the full history of decisions instead of just the previous decision.
As a result, the recourse functions also depend on the the full history of decisions and the formulas of the optimality cuts
for these functions and of the feasibility cuts built by the traditional implementation of SDDP (where recourse functions depend on the previous decision only) 
need to be updated, see Algorithms 1, 2, and 3.
\item[(C)] The argument $x_{1:t-1}=(x_1,\ldots,x_{t-1})$ of the recourse function $\mathcal{Q}_t$ for stage $t$
takes values in $\mathbb{R}^{n(t-1)}$ (see Section \ref{framework} for details). 
To derive cuts for this function, we need the description of the subdifferential of
a lower bounding convex function which is the value function of a convex problem.
For that, proceeding as in \cite{lecphilgirar12}, $n(t-1)$ additional variables $z^t \in \mathbb{R}^{n(t-1)}$
and the $n(t-1)$ constraints $z^t = (x_1^\transp,\ldots,x_{t-1}^\transp)^\transp$ would be added. With the 
argument $x_{1:t-1}$ of the value function appearing only in the right-hand side of linear constraints,
\cite{lecphilgirar12} then uses the (known) formula of the subdifferential
of a value function whose argument is the right-hand side of linear constraints.
On the contrary, we derive in Lemma \ref{dervaluefunction}
a formula for the subdifferential of the value function of a convex problem
(with the argument of the value function in both the objective and nonlinear constraints)
that does not need the introduction of additional variables and
constraints.
We believe that this lemma is a key tool for the implementation of SDDP applied to convex problems
and is interesting per-se 
since subgradients of value functions of convex problems are
computed at a large number of points at each iteration of the algorithm. 
The use of this formula should speed up each iteration.
We are not aware of another paper proving this formula.
\item[(D)] A separate convergence proof is given for the case of interstage independent processes in which cuts
can be shared between nodes of the same stage, assuming relatively complete recourse. The way to extend the algorithm and convergence proof to 
solve MSLPs that do not satisfy the relatively complete recourse assumption 
and to solve interstage dependent MSCPs
is also discussed.
\item[(E)] It is shown that the optimal value of the approximate first stage problem
converges almost surely to the optimal value of the problem and that almost surely
any accumulation point of the sequence of approximate first stage solutions
is an optimal solution  of the first stage problem.
\end{itemize}
However, we use the traditional sampling process for SDDP (\cite{pereira}), which is less general
than the one from \cite{lecphilgirar12}. 
From the convergence analysis, we see that the main ingredients on which the convergence of SDDP relies
(both in the risk-averse and risk-neutral settings) are the following:
\begin{itemize}
\item[(i)] the decisions belong almost surely to compact sets.
\item[(ii)] The recourse functions and their lower bounding approximations
are convex Lipschitz continuous on some sets. The subdifferentials of these functions
are bounded on these sets.
\item[(iii)] The samples generated
along the iterations
are independent and at
 each stage, conditional to the history of the process, the number of possible
realizations of the process is finite.
\end{itemize}
Since the recourse functions are expressed in terms of value functions of convex optimization
problems, it is useful to study properties of such functions.
This analysis is done in Section \ref{propvaluefunction} where we provide a formula for the
subdifferential of the value function of a convex optimization problem 
as well as conditions ensuring the continuity of this function and the boundedness
of its subdifferential.
Section \ref{framework} introduces the class of problems and decomposition algorithms we consider
and prepares the ground showing (ii) above. Section \ref{sec:relativelcr} shows the convergence
of these decomposition algorithms
for interstage independent processes when relatively complete recourse holds.
In Section \ref{sec:withoutrelativelcr}, we explain how to extend the algorithm and convergence analysis
for the special case of multistage stochastic linear programs that do not satisfy the relatively complete recourse
assumption. Finally, while Sections \ref{framework}-\ref{sec:withoutrelativelcr} deal with interstage independent processes,
Section \ref{idp} establishes the convergence when the process is interstage dependent.

We use the following notation and terminology:
\begin{itemize}
\item The tilde symbol will be used to represent realizations of random variables:
for random variable $\xi$, $\tilde \xi$ is a realization of $\xi$.
\item For vectors $x_1,\ldots,x_m \in \mathbb{R}^{n}$, 
we denote by $[x_1, \ldots,x_m]$ the $n\small{\times}m$ matrix
whose $i$-th column is the vector $x_i$.
\item For matrices $A, B$, we denote the matrix $\left(\begin{array}{c}A\\B\end{array}\right)$ by $[A;B]$
and the matrix $\left(A \;B\right)$ by $[A, B]$
\item For sequences of $n$-vectors $(x_t)_{t \in \mathbb{N}}$ and $t_1 \leq t_2 \in \mathbb{N}$, 
$x_{t_{1}:t_{2}}$ will represent, depending on the context,
\begin{itemize}
\item[(i)] the Cartesian product $(x_{t_1 }, x_{t_1 +1},\ldots,x_{t_2}) \in \underbrace{\mathbb{R}^{n} {\small{\times}} \ldots {\small{\times}} \mathbb{R}^{n}}_{t_2-t_1+1 \mbox{ times}}$ or
\item[(ii)] the vector 
$[x_{t_1 }; x_{t_1 +1};\ldots;x_{t_2}] \in \mathbb{R}^{n(t_2 - t_1 + 1)}$.
\end{itemize}
\item The usual scalar product in $\mathbb{R}^n$ is denoted by $\langle x, y\rangle = x^\transp y$ for $x, y \in \mathbb{R}^n$.
The corresponding norm is $\|x\|=\|x\|_2=\sqrt{\langle x, x \rangle}$.

\item $\mathbb{I}_{A}(\cdot)$ is the indicator function of the set $A$:
$$
\mathbb{I}_{A}(x) :=
\left\{
\begin{array}{ll}
0, & \mbox{if }x \in A,\\
+\infty, & \mbox{if }x \notin A.
\end{array}
\right.
$$
\item $\mbox{Gr}(f)$ is the graph of multifunction $f$. 
\item $A^*=\{x : \langle x, a \rangle \leq 0, \forall a \in A\}$ is the polar cone of $A$.
\item $\mathcal{N}_A(x)$ is the normal cone to $A$ at $x$.
\item $\mathcal{T}_A(x)$ is the tangent cone to $A$ at $x$.
\item $\mbox{ri}(A)$ is the relative interior of set $A$.
\item $\mathbb{B}_n$ is the unit ball $\mathbb{B}_n=\{x \in \mathbb{R}^n : \|x\| \leq 1 \}$ in $\mathbb{R}^n$.
\item dom($f$) is the domain of function $f$.
\end{itemize}

\section{Some properties of the value function of a convex optimization problem} \label{propvaluefunction}

We start providing a representation of the subdifferential of the value function of a convex optimization problem.
This result plays a central role in
the implementation and convergence analysis of SDDP applied to convex problems and will be used in the sequel.

Let $\mathcal{Q}: X\rightarrow {\overline{\mathbb{R}}}$, be the value function given by
\begin{equation} \label{vfunctionq}
\mathcal{Q}(x)=\left\{
\begin{array}{l}
\inf_{y \in \mathbb{R}^{n}} \;f(x,y)\\
y \in S(x):=\{y\in Y \;:\;Ax+By=b,\;g(x,y)\leq 0\}.
\end{array}
\right.
\end{equation}
Here, $A$ and $B$ are matrices of appropriate dimensions, and
$X \subseteq \mathbb{R}^m$ and $Y \subseteq \mathbb{R}^n$ are nonempty, compact, and convex
sets. Denoting by
\begin{equation}\label{epsfatten}
X^\varepsilon := X + \varepsilon  \mathbb{B}_m
\end{equation}
the $\varepsilon$-fattening of the set $X$, we make the following assumption (H):
\begin{itemize}
\item[1)] $f:\mathbb{R}^m \small{\times} \mathbb{R}^n \rightarrow \mathbb{R} \cup \{+\infty\}$ is lower semicontinuous, proper, and convex.
\item[2)] For $i=1,\ldots,p$, the $i$-th component of function
$g(x,y)$ is a convex lower semicontinuous function
$g_i:\mathbb{R}^m \small{\times} \mathbb{R}^n \rightarrow \mathbb{R} \cup \{+\infty\}$.
\item[3)] There exists $\varepsilon>0$ such that $X^{\varepsilon}{\small{\times}}  Y \subset \mbox{dom}(f)$.
\end{itemize}

Consider the Lagrangian dual problem 
\begin{equation}\label{dualpb}
\displaystyle \sup_{(\lambda, \mu) \in \mathbb{R}^q \small{\times} \mathbb{R}_{+}^{p} }\; \theta_{x}(\lambda, \mu)
\end{equation}
for the dual function
$$
\theta_{x}(\lambda, \mu)=\displaystyle \inf_{y \in Y} \;f(x,y) + \lambda^\transp (Ax+By-b) + \mu^\transp g(x,y).
$$
We denote by $\Lambda(x)$ the set of optimal solutions of the  dual problem \eqref{dualpb}
and we use the notation
$$
\mbox{Sol}(x):=\{y \in S(x) : f(x,y)=\mathcal{Q}(x)\}
$$
to indicate the solution set to \eqref{vfunctionq}.

It is well known that under Assumption (H), $\mathcal{Q}$ is convex and
if $f$ is uniformly convex then $\mathcal{Q}$ is uniformly convex too.
The description of the subdifferential of $\mathcal{Q}$ is given in
the following lemma:
\begin{lemma}\label{dervaluefunction} 
Consider the value function $\mathcal{Q}$ given by \eqref{vfunctionq} and take $x_0 \in X$
such that $S(x_0)\neq \emptyset$.
Let 
$$
C_1 = \Big\{(x,y) \in \mathbb{R}^{m}{\small{\times}}\mathbb{R}^{n} \;:\; Ax+By=b\Big\}
 \mbox{ and }C_2 = \Big\{(x,y) \in \mathbb{R}^{m}{\small{\times}}\mathbb{R}^{n} \;:\;g(x,y)\leq 0 \Big\}.
$$
Let Assumption (H) hold and
assume the Slater-type constraint qualification condition:
$$
there \;exists\; (\bar x, \bar y) \in X{\small{\times}}\emph{ri}(Y) \mbox{ such that } (\bar x, \bar y) \in C_1 \mbox{ and } (\bar x, \bar y) \in \emph{ri}(C_2).
$$
Then $s \in \partial \mathcal{Q}(x_0)$ if and only if
\begin{equation}\label{caractsubQ}
\begin{array}{l}
(s, 0) \in  \partial f(x_0, y_0)+\Big\{[A^\transp; B^\transp ] \lambda \;:\;\lambda \in \mathbb{R}^q\Big\}\\
\hspace*{1.2cm}+ \Big\{\displaystyle \sum_{i \in I(x_0, y_0)}\; \mu_i \partial g_i(x_0, y_0)\;:\;\mu_i \geq 0 \Big\}+ \Big\{\{0\}  \small{\times} \mathcal{N}_{Y}(y_0)  \Big\},
\end{array}
\end{equation}
where $y_0$ is any element in the solution set \mbox{Sol}($x_0$), and with
$$I(x_0, y_0)=\Big\{i \in \{1,\ldots,p\} \;:\;g_i(x_0, y_0) =0\Big\}.$$

In particular, if $f$  and $g$ are differentiable, then 
$$
\partial \mathcal{Q}(x_0)=\Big\{  \nabla_x f(x_0, y_0)+ A^\transp \lambda + \sum_{i \in I(x_0, y_0)}\; \mu_i \nabla_x g_i(x_0, y_0)\;:\; (\lambda, \mu) \in \Lambda(x_0) \Big\}.
$$
\end{lemma}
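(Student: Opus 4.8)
The plan is to reformulate $\mathcal{Q}$ as an unconstrained convex minimization over $(x,y)$ by absorbing the constraints into indicator functions, compute the subdifferential of the value function via the standard rule for infimal projections, and then split the subdifferential of the composite objective using a sum rule justified by the Slater-type condition. Concretely, introduce the function
\[
F(x,y) := f(x,y) + \mathbb{I}_{C_1}(x,y) + \mathbb{I}_{C_2}(x,y) + \mathbb{I}_{\mathbb{R}^m\times Y}(x,y),
\]
so that $\mathcal{Q}(x) = \inf_{y} F(x,y)$ is the marginal (value) function of $F$. Under Assumption (H), $F$ is proper, lsc, and convex, and by Assumption (H)-3) together with the compactness of $X$ and $Y$ the infimum is attained on $X$, with $\mathrm{Sol}(x_0)\neq\emptyset$ whenever $S(x_0)\neq\emptyset$. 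The first step is then to invoke the classical formula $s\in\partial\mathcal{Q}(x_0)$ if and only if $(s,0)\in\partial F(x_0,y_0)$ for any $y_0\in\mathrm{Sol}(x_0)$ — this holds because $\mathcal{Q}(x_0) = F(x_0,y_0)$ and, for a convex marginal function, $(s,0)$ being a subgradient of $F$ at a minimizing pair is equivalent to $s$ subgradient of the projection. (One direction is immediate from the definitions; the converse uses that the minimum over $y$ is attained.)

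The second step is to apply the Moreau--Rockafellar sum rule to $\partial F(x_0,y_0)$, writing
\[
\partial F(x_0,y_0) = \partial f(x_0,y_0) + \mathcal{N}_{C_1}(x_0,y_0) + \mathcal{N}_{C_2}(x_0,y_0) + \mathcal{N}_{\mathbb{R}^m\times Y}(x_0,y_0).
\]
The normal cone to the affine set $C_1$ is the range of $[A^\transp;B^\transp]$; the normal cone to $\mathbb{R}^m\times Y$ at $(x_0,y_0)$ is $\{0\}\times\mathcal{N}_Y(y_0)$; and $\mathcal{N}_{C_2}(x_0,y_0)$, by the normal-cone formula for a convex feasible region defined by convex inequalities $g_i\le 0$ (again under a constraint qualification), equals $\{\sum_{i\in I(x_0,y_0)}\mu_i\, v_i : \mu_i\ge 0,\ v_i\in\partial g_i(x_0,y_0)\}$. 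Combining these four pieces gives exactly \eqref{caractsubQ}. The differentiable specialization follows by replacing subdifferentials with gradients and recognizing that the multiplier vector $(\lambda,\mu)$ appearing in the expression is precisely a Lagrange multiplier, i.e., an element of $\Lambda(x_0)$: the optimality conditions $0\in\partial_y[\,f + \lambda^\transp(Ax+By-b) + \mu^\transp g\,](x_0,y_0) + \mathcal{N}_Y(y_0)$ with $\mu\ge 0$ and complementary slackness characterize dual optimal solutions, and conversely any dual optimal $(\lambda,\mu)$ yields a subgradient by this construction (here the constraint qualification ensures $\Lambda(x_0)\neq\emptyset$ and that strong duality holds).

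The main obstacle is the careful verification that all the constraint qualifications needed for the sum rule and for the normal-cone formula of $C_2$ are implied by the single Slater-type hypothesis in the statement. The delicate point is that the sum rule requires a relative-interior-type transversality among $\dom f$, $C_1$, $C_2$, and $\mathbb{R}^m\times Y$; here Assumption (H)-3) ($X^\varepsilon\times Y\subset\dom f$) is what makes $\dom f$ non-binding, while the hypothesis "$(\bar x,\bar y)\in X\times\mathrm{ri}(Y)$ with $(\bar x,\bar y)\in C_1\cap\mathrm{ri}(C_2)$" supplies a common point in the relative interiors of the remaining three sets. I would spell out that $\mathrm{ri}(C_2)\supseteq\{(x,y): g_i(x,y)<0,\ i=1,\dots,p\}$ (after accounting for any affine $g_i$) so the stated condition is the usual Slater condition, and then cite the refined Moreau--Rockafellar calculus (e.g., the version requiring only that the relative interiors of the domains have a point in common). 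A secondary subtlety is that $y_0$ ranges over $\mathrm{Sol}(x_0)$ and the characterization must be independent of the choice of $y_0$; this is automatic because $(s,0)\in\partial F(x_0,y_0)$ is equivalent to $s\in\partial\mathcal{Q}(x_0)$ for every minimizer $y_0$, so the right-hand side of \eqref{caractsubQ} does not in fact depend on which $y_0\in\mathrm{Sol}(x_0)$ is picked.
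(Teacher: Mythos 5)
Your proposal is correct and follows essentially the same route as the paper: both express $\mathcal{Q}$ as the marginal function of $f$ plus indicator functions, invoke the rule $s\in\partial\mathcal{Q}(x_0)\Leftrightarrow(s,0)\in\partial F(x_0,y_0)$ at a minimizer $y_0$, split the subdifferential via the Moreau--Rockafellar/normal-cone-of-intersection calculus justified by the Slater point together with (H)-3), and identify the three normal cones exactly as in the paper, including the identification of $(\lambda,\mu)$ with elements of $\Lambda(x_0)$ in the differentiable case. The only difference is cosmetic: the paper first lumps the constraints into $\mathbb{I}_{\mbox{Gr}(S)}$ and then decomposes $\mathcal{N}_{\mbox{Gr}(S)}$, whereas you apply the sum rule to the four terms at once.
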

\begin{proof}
Observe that
$$
\mathcal{Q}(x)=
\left\{
\begin{array}{l}
\inf \; f(x,y)+\mathbb{I}_{\mbox{Gr}(S)}(x,y)\\
y \in \mathbb{R}^{n}
\end{array}
\right.
$$
where $\mathbb{I}_{\mbox{Gr}(S)}$ is the indicator function of the set
$$
\begin{array}{l}
\mbox{Gr}(S):=\Big\{(x,y) \in \mathbb{R}^{m}{\small{\times}}\mathbb{R}^{n} \;:\; Ax+By=b,\;g(x,y)\leq 0,\;y \in Y \Big\}=  C_1 \bigcap C_2 \bigcap \mathbb{R}^m \small{\times} Y.
\end{array}
$$
Using Theorem 24(a) in Rockafellar \cite{rock74}, we have
\begin{equation}\label{subdiffvaluef1}
\begin{array}{lll}
s \in \partial \mathcal{Q}(x_0) &\Leftrightarrow &(s,0) \in \partial (f+\mathbb{I}_{\mbox{Gr}(S)})(x_0,y_0)\\
&\Leftrightarrow &(s, 0)   \in \partial f(x_0,y_0) +  \mathcal{N}_{\mbox{Gr}(S)}(x_0, y_0). \;\;(a)
\end{array}
\end{equation}
For equivalence \eqref{subdiffvaluef1}-(a), we have used the fact that $f$ and $\mathbb{I}_{\mbox{Gr}(S)}$ are proper,
finite at $(x_0, y_0)$, and 
\begin{equation}\label{interdomnempty}
\mbox{ri}(\mbox{dom}(f)) \cap \mbox{ri}(\mbox{dom}(\mathbb{I}_{\mbox{Gr}(S)})) \neq \emptyset.
\end{equation}
The set $\mbox{ri}(\mbox{dom}(f)) \cap \mbox{ri}(\mbox{dom}(\mathbb{I}_{\mbox{Gr}(S)}))$ is nonempty because
it contains the point $(\bar x, \bar y)$:
$$
\begin{array}{lll}
(\bar x, \bar y) \in C_1 \cap \mbox{ri}(C_2) \cap \mathbb{R}^m \small{\times}\mbox{ri}(Y) &= &\mbox{ri}(C_1) \cap \mbox{ri}(C_2) \cap \mathbb{R}^m \small{\times}\mbox{ri}(Y)\\
 &= & \mbox{ri}(C_1 \cap C_2 \cap \mathbb{R}^m \small{\times} Y) = \mbox{ri}(\mbox{dom}(\mathbb{I}_{\mbox{Gr}(S)})),\\
(\bar x, \bar y) \in X\small{\times}\mbox{ri}(Y) \subseteq \mbox{ri}(X^\varepsilon) \small{\times} \mbox{ri}(Y)& = &\mbox{ri}(X^\varepsilon \small{\times} Y) \stackrel{(H)}{\subseteq} \mbox{ri}(\mbox{dom}(f)).
\end{array}
$$
Using the fact $C_1$ is an affine space and $C_2$ and $Y$ are closed and convex sets
such that $(\bar x, \bar y) \in \mbox{ri}(C_2) \cap \mbox{ri}(\mathbb{R}^m  \small{\times} Y) \cap C_1 \neq \emptyset$, we have 
$$
\mathcal{N}_{\mbox{Gr}(S)}(x_0, y_0)=\mathcal{N}_{C_1}(x_0,y_0)+\mathcal{N}_{C_2}(x_0,y_0)+\mathcal{N}_{\mathbb{R}^{m}\small{\times}Y}(x_0,y_0).
$$
But $\mathcal{N}_{\mathbb{R}^{m}\small{\times}Y}(x_0,y_0)=\{0\}\small{\times}\mathcal{N}_{Y}(y_0)$ and 
standard calculus on normal and tangent cones shows that 
$$
\begin{array}{lll}
\mathcal{T}_{C_1}(x_0,y_0)&=&\{(x,y) \;:\;Ax+By=0\}=\mbox{Ker}([A,B]),\\
\mathcal{N}_{C_1}(x_0, y_0)&=&\mathcal{T}_{C_1}^{*}(x_0,y_0)=(\mbox{Ker}([A, B])^{\perp}\\
&=&\mbox{Im}[A^\transp; B^\transp]=\Big\{[A^\transp; B^\transp ] \lambda \;:\;\lambda \in \mathbb{R}^q\Big\},\\
\mathcal{N}_{C_2}(x_0, y_0)&=&\Big\{\displaystyle \sum_{i \in I(x_0, y_0)}\; \mu_i \partial g_i(x_0, y_0)\;:\;\mu_i \geq 0  \Big\}.
\end{array}
$$
This completes the announced characterization \eqref{caractsubQ}  of $\partial \mathcal{Q}( x_0)$. If $f$  and $g$ are differentiable then the condition \eqref{caractsubQ} can be written
\begin{equation}\label{caractsub}
\begin{array}{l}
s =  \nabla_x f(x_0, y_0)+ A^\transp \lambda + \displaystyle \sum_{i \in I(x_0, y_0)}\; \mu_i \nabla_x g_i(x_0, y_0),\;(a)\\
-\left[\nabla_y f(x_0, y_0)+ B^\transp \lambda + \displaystyle \sum_{i \in I(x_0, y_0)}\; \mu_i \nabla_y g_i(x_0, y_0)\right] \in \mathcal{N}_{Y}(y_0),\;(b)
\end{array}
\end{equation}
for some $\lambda \in \mathbb{R}^q$ and $\mu \in \mathbb{R}_{+}^{|I(x_0, y_0)|}$.

Finally, note that a primal-dual solution $(y_0, \lambda, \mu)$ satisfies \eqref{caractsub}-(b)
and if $(y_0, \lambda, \mu)$ with $\mu \geq 0$ satisfies \eqref{caractsub}-(b), knowing that $y_0$ is primal feasible, then under our assumptions $(\lambda, \mu)$
is a dual solution, i.e., $(\lambda, \mu) \in \Lambda(x_0)$.\hfill
\end{proof}
The following proposition provides conditions ensuring the 
Lipschitz continuity of $\mathcal{Q}$ and the boundedness
of its subdifferential at any point in $X$:
\begin{prop}\label{continuityvalf}
Consider the value function $\mathcal{Q}$ given by \eqref{vfunctionq}.
Let Assumption (H) hold and assume that 
for every $x \in X^{\varepsilon}$, the set $S(x)$ is nonempty, where $\varepsilon$
is given in (H)-3).
Then $\mathcal{Q}$ is finite on $X^\varepsilon$,
Lipschitz continuous on $X$, and the set $\cup_{x \in X} \partial \mathcal{Q}(x)$ is bounded.
More precisely, if $M_0=\sup_{x \in X^\varepsilon} \mathcal{Q}(x)$ and $m_0=\min_{x \in X} \mathcal{Q}(x)$, then for every $x \in X$ and every 
$s \in \partial \mathcal{Q}(x)$ we have
\begin{equation} \label{uppbounds}
\|s\|\leq \frac{1}{\varepsilon}(M_0 -m_0 ).
\end{equation}
\end{prop}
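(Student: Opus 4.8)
The plan is to deduce everything from one geometric observation: for every $x\in X$ the closed ball $x+\varepsilon\mathbb{B}_m$ is contained in $X^\varepsilon=X+\varepsilon\mathbb{B}_m$, so that $X\subseteq\intt(X^\varepsilon)$ and, more to the point, a subgradient of $\mathcal{Q}$ at $x$ can be probed against a point of $X^\varepsilon$ lying at the full distance $\varepsilon$ from $x$ in the direction of that subgradient.

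First I would establish pointwise finiteness of $\mathcal{Q}$ on $X^\varepsilon$ (and, incidentally, $\mbox{Sol}(x)\neq\emptyset$). Fix $x\in X^\varepsilon$. The set $S(x)=Y\cap\{y:By=b-Ax\}\cap\bigcap_{i=1}^{p}\{y:g_i(x,y)\le 0\}$ is closed, being the intersection of the compact set $Y$, an affine manifold, and the sublevel sets of the lower semicontinuous functions $g_i(x,\cdot)$; it is bounded because $S(x)\subseteq Y$; hence it is compact, and it is nonempty by hypothesis. Since $\{x\}\times Y\subseteq X^\varepsilon\times Y\subseteq\dom(f)$ by (H)-3), $f(x,\cdot)$ is real-valued and lower semicontinuous on the compact set $S(x)$, so the infimum defining $\mathcal{Q}(x)$ is attained and finite. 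Thus $\mathcal{Q}(x)\in\bR$ for every $x\in X^\varepsilon$, the minimizer lying in $\mbox{Sol}(x)$.

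Next, since $\mathcal{Q}$ is convex (recalled above) and finite on the convex set $X^\varepsilon$ whose interior contains $X$, it is locally Lipschitz, hence continuous, on $\intt(X^\varepsilon)$; in particular it is continuous on the compact set $X$, so $m_0=\min_X\mathcal{Q}$ is finite and attained, and $\partial\mathcal{Q}(x)\neq\emptyset$ at every $x\in X$. Now fix $x\in X$ and $s\in\partial\mathcal{Q}(x)$ with $s\neq0$, and set $x'=x+\varepsilon\,s/\|s\|\in X+\varepsilon\mathbb{B}_m=X^\varepsilon$. The subgradient inequality gives $\mathcal{Q}(x')\ge\mathcal{Q}(x)+\langle s,x'-x\rangle=\mathcal{Q}(x)+\varepsilon\|s\|$, and combining $\mathcal{Q}(x')\le M_0$ with $\mathcal{Q}(x)\ge m_0$ yields $\varepsilon\|s\|\le M_0-m_0$, which is exactly \eqref{uppbounds}. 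Once \eqref{uppbounds} is available with $M_0<\infty$, boundedness of $\cup_{x\in X}\partial\mathcal{Q}(x)$ is immediate, and Lipschitz continuity of $\mathcal{Q}$ on $X$ follows in the standard way: for $x_1,x_2\in X$ and $s_i\in\partial\mathcal{Q}(x_i)$ one has $\langle s_1,x_2-x_1\rangle\le\mathcal{Q}(x_2)-\mathcal{Q}(x_1)\le\langle s_2,x_2-x_1\rangle$, hence $|\mathcal{Q}(x_2)-\mathcal{Q}(x_1)|\le\tfrac1\varepsilon(M_0-m_0)\|x_2-x_1\|$.

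The step I expect to need the most care is the finiteness of $M_0=\sup_{X^\varepsilon}\mathcal{Q}$: a convex function that is merely finite, but not continuous, on a compact convex set need not be bounded above there, so pointwise finiteness on $X^\varepsilon$ is not by itself sufficient. I would handle this by retreating slightly inward: for $0<\varepsilon'<\varepsilon$ the fattening $X^{\varepsilon'}$ is a compact subset of $\intt(X^\varepsilon)$, on which $\mathcal{Q}$ is continuous and hence bounded, so $M_0':=\sup_{X^{\varepsilon'}}\mathcal{Q}<\infty$; rerunning the subgradient estimate with $\varepsilon'$ in place of $\varepsilon$ bounds every $s\in\partial\mathcal{Q}(x)$, $x\in X$, by $\tfrac1{\varepsilon'}(M_0'-m_0)<\infty$, which already gives boundedness of the subdifferential and Lipschitz continuity on $X$. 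The sharp constant $\tfrac1\varepsilon(M_0-m_0)$ of \eqref{uppbounds} is then recovered by letting $\varepsilon'\uparrow\varepsilon$, or directly from the fact that the value function $\mathcal{Q}$ is upper semicontinuous on $X^\varepsilon$ — a consequence of the inner semicontinuity of the convex-graph feasible-set multifunction $S$ on $X^\varepsilon$ together with the continuity properties of $f$ — so that $M_0<\infty$ outright.
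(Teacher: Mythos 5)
Your proof is correct and follows essentially the same route as the paper: the same compactness/lower-semicontinuity argument for finiteness of $\mathcal{Q}$ on $X^\varepsilon$, Lipschitz continuity from convexity plus $X\subseteq\intt(X^\varepsilon)$, and the identical probing step $x'=x+\varepsilon s/\|s\|\in X^\varepsilon$ in the subgradient inequality. The one place you go beyond the paper is the finiteness of $M_0=\sup_{X^\varepsilon}\mathcal{Q}$: the paper justifies it only by noting that $\mathcal{Q}$ is finite and lower semicontinuous on the compact set $X^\varepsilon$, which yields $m_0>-\infty$ but not an upper bound, and you are right that pointwise finiteness of a convex function on a compact convex set does not by itself give boundedness above. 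Your retreat to $X^{\varepsilon'}$ with $\varepsilon'<\varepsilon$ (where continuity, hence boundedness, is automatic) cleanly repairs this and still delivers the stated conclusions, so this is a legitimate tightening rather than a deviation; your alternative suggestion via upper semicontinuity of $\mathcal{Q}$ through inner semicontinuity of $S$ is the only sketchy part, but it is not needed.
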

\begin{proof} 
Finiteness of $\mathcal{Q}$ on $X^\varepsilon$ follows from the fact that, under the assumptions of the lemma, for every $x \in X^\varepsilon$, the 
feasible set $S(x)$ of \eqref{vfunctionq} is nonempty and compact and the objective function $f(x, \cdot)$ is finite valued
on $Y$ and lower semicontinuous.
It follows that $X$ is contained in the relative interior of the domain of 
$\mathcal{Q}$. Since $\mathcal{Q}$ is convex and since a convex function is Lipschitz continuous
on the relative interior of its domain, $\mathcal{Q}$ is Lipschitz continuous on $X$. 

Next, for every $x \in X$, for every
$y \in X^\varepsilon$ and $s \in \partial \mathcal{Q}(x)$, we have
$$
\mathcal{Q}(y) \geq \mathcal{Q}(x) + \langle s, y-x \rangle.
$$
Observing  that $M_0$ and $m_0$ are finite ($\mathcal{Q}$ is finite and lower semicontinuous on the compact set $X^\varepsilon$),
for every $x \in X$ and $y \in X^\varepsilon$ we get
$$
M_0 \geq m_0 + \langle s, y-x \rangle.
$$
If $s = 0$ then \eqref{uppbounds} holds and 
if $s \neq 0$, taking $y=x+ \varepsilon \frac{s}{\|s\|} \in X^{\varepsilon}$  in the above relation, we obtain 
\eqref{uppbounds}, i.e., $s$ is bounded.\hfill
\end{proof}

\if{
\begin{rem} If $\emph{Aff}(X) \neq \mathbb{R}^m$, the proof of \eqref{continuityvalf} does not work if instead of (H)-3),
we use the weaker assumption:
$$
\mbox{there exists }\varepsilon>0 \mbox{ such that }(X^{\varepsilon} \cap \emph{Aff}(X)) {\small{\times}}  Y \subset \mbox{dom}(f).
$$
Indeed, assuming that $0 \in X$, then for any subgradient $s(x) \in \partial Q(x)$, the orthogonal projection $\Pi_{\emph{Aff}(X)}[s(x)]$  of 
$s(x)$ onto \emph{Aff}($X$) (which is a subspace since $0 \in X$) is still a subgradient of $\mathcal{Q}$ at $x$.
However, an arbitrary subgradient $s(x) \in \partial Q(x)$ does not necessarily
belong to \emph{Aff}($X$) since $\Pi_{\emph{Aff}(X)}[s(x)] +  y \in \partial Q(x)$ for any $y \in \emph{Aff}(X)^{\perp}$
and $\Pi_{\emph{Aff}(X)}[s(x)] +  y \notin \emph{Aff}(X)$. For this reason, in the next section,
we cannot replace Assumption (H2)-4) by Assumption (H1)-6) from \cite{lecphilgirar12}.
\end{rem}
}\fi

\section{Decomposition methods for risk-averse multistage stochastic convex programs}\label{framework}

Consider a risk-averse multistage stochastic optimization problem of the form
\begin{equation} \label{pbinit0}
\begin{array}{ll}
\displaystyle \inf_{x_1 \in X_1(x_0, \xi_{1})}&f_{1}(x_{1},\,\Psi_1) +  \rho_{2|\mathcal{F}_1}\left( \displaystyle \inf_{x_2 \in X_2(x_{0:1}, \xi_{2})}\;f_{2}(x_{1:2}, \Psi_2 ) + \ldots \right.\\
&+\rho_{T-1|\mathcal{F}_{T-2}}\left( \displaystyle \inf_{x_{T-1} \in X_{T-1}(x_{0:T-2},\,\xi_{T-1})}\;f_{T-1}(x_{1:T-1}, \Psi_{T-1}) \right.\\
&+ \rho_{T|\mathcal{F}_{T-1}}\left. \left. \left( \displaystyle \inf_{x_{T} \in X_{T}(x_{0:T-1},\,\xi_{T})}\;f_{T}(x_{1:T}, \Psi_{T})  \right) \right) \ldots \right)
\end{array}
\end{equation}
for some functions $f_t$ taking values in $\mathbb{R}\cup \{+\infty\}$, 
where 
$$
X_{t}(x_{0:t-1},\, \xi_{t}) = \Big\{x_t \in \mathcal{X}_t \;:\;g_t(x_{0:t}, \Psi_t) \leq 0,\;\;\displaystyle \sum_{\tau=0}^{t} \; A_{t, \tau} x_{\tau} = b_t\Big\}
$$
for some vector-valued functions $g_t$,
some random vectors $\Psi_t$ and $b_t$, some random matrices $A_{t, \tau}$, and where
$\xi_t$ is a discrete random vector with finite support corresponding to the concatenation of the
random variables $(\Psi_t, b_t,$ $(A_{t, \tau})_{\tau=0,\ldots,t})$
in an arbitrary order. In this problem $x_0$ is given, $\xi_1$ is deterministic, $(\xi_t)$ is 
a stochastic process, and setting $\mathcal{F}_{t}=\sigma(\xi_1,\ldots,\xi_t)$
and denoting by $\mathcal{Z}_t$ the set of $\mathcal{F}_t$-measurable functions, 
$\rho_{t+1|\mathcal{F}_t}: \mathcal{Z}_{t+1} \rightarrow \mathcal{Z}_{t}$ is a coherent and law invariant conditional risk measure.

In this section and the next two Sections \ref{sec:relativelcr} and \ref{sec:withoutrelativelcr}, we assume that the stochastic process $(\xi_t)$ 
satisfies the following assumption:\\
\begin{itemize}
\item[(H1)] $(\xi_t)$ 
is interstage independent and
for $t=2,\ldots,T$, $\xi_t$ is a random vector taking values in $\mathbb{R}^K$ with discrete distribution and
finite support $\{\xi_{t, 1}, \ldots, \xi_{t, M}\}$ while $\xi_1$ is deterministic 
($\xi_{t, j}$ is the vector corresponding to the concatenation of the elements in 
$(\Psi_{t, j}, b_{t, j}, (A_{t, \tau, j})_{\tau=0,\ldots,t}))$.\\
\end{itemize}
Under Assumption (H1), $\rho_{t+1|\mathcal{F}_t}$ coincides with its unconditional counterpart $\rho_{t+1}: \mathcal{Z}_{t+1} \rightarrow \mathbb{R}$.
To alleviate notation and without loss of generality, we assume that the number $M$ of possible realizations
of $\xi_t$, the size $K$ of $\xi_t$, and $n$ of $x_t$ do not depend on $t$.

For problem \eqref{pbinit0}, we can write the following dynamic programming equations:
we set $\mathcal{Q}_{T+1} \equiv 0$ and for
$t=2,\ldots,T$, define 
\begin{equation}\label{definitionQt}
\mathcal{Q}_{t}(x_{1:t-1})=\rho_t\Big(\mathfrak{Q}_{t}(x_{1:t-1}, \xi_t)\Big)
\end{equation}
with
\begin{equation} \label{defmathfrak}
\begin{array}{lll}
\mathfrak{Q}_{t}(x_{1:t-1}, \xi_t)&=&
\left\{
\begin{array}{l}
\displaystyle \inf_{x_t}\;F_t(x_{1:t}, \Psi_t):=f_t(x_{1:t}, \Psi_{t}) + \mathcal{Q}_{t+1}(x_{1:t})\\
x_t \in \mathcal{X}_t,\;g_t(x_{0:t}, \Psi_t) \leq 0,\;\;\displaystyle \sum_{\tau=0}^{t} \; A_{t, \tau} x_{\tau} = b_t,
\end{array}
\right.
\\
&=&\left\{
\begin{array}{l}
\displaystyle \inf_{x_t} \;F_t(x_{1:t}, \Psi_t)\\
x_t \in X_t (x_{0:t-1}, \xi_t ).
\end{array}
\right.
\end{array}
\end{equation}
With this notation, $F_t(x_{1:t}, \Psi_t)$ is the future optimal cost starting at time $t$
from the history of decisions $x_{1:t-1}$ if  $\Psi_t$ and $x_t$ are respectively the value of the
process $(\Psi_t)$ and the decision taken at stage $t$.  Problem \eqref{pbinit0} can then be written
\begin{equation} \label{firststagepb}
\left\{
\begin{array}{l}
\displaystyle \inf_{x_1} \; F_1(x_{1}, \Psi_1):=f_1(x_{1}, \Psi_{1}) + \mathcal{Q}_{2}(x_{1})\\
x_1 \in X_1 (x_{0}, \xi_1 )=\{
x_1 \in \mathcal{X}_1 : g_1(x_0, x_1, \Psi_1) \leq 0, A_{1,1} x_1 = b_1 - A_{1,0} x_0\},
\end{array}
\right.
\end{equation}
with optimal value denoted by $\mathcal{Q}_{1}(x_{0})=\mathfrak{Q}_{1}(x_{0}, \xi_1)$.
\par Setting $\Phi_{t, j} = \mathbb{P}(\xi_t = \xi_{t, j})>0$ for $j=1,\ldots,M$,   
we reformulate the problem as in \cite{philpmatos2} using the dual representation of a coherent risk measure \cite{artzner}:
\begin{equation} \label{defqtcoherent}
\mathcal{Q}_{t}(x_{1:t-1})=\rho_t( \mathfrak{Q}_{t}(x_{1:t-1}, \xi_t) ) = \displaystyle \sup_{p \in \mathcal{P}_t} \;\displaystyle \sum_{j=1}^{M} p_j \Phi_{t, j} \mathfrak{Q}_{t}(x_{1:t-1}, \xi_{t, j})
\end{equation}
for some convex subset $\mathcal{P}_t$
of 
$$
\mathcal{D}_t=\{p \in \mathbb{R}^M : p \geq 0,\;\;\sum_{j=1}^{M}\;p_j \Phi_{t, j} =1  \}.
$$ 
Optimization problem \eqref{defqtcoherent} is convex and linear if $\mathcal{P}_t$ is a polyhedron.
Such is the case when $\rho_t = CVaR_{1-\varepsilon_t}$ is the Conditional Value-at-Risk   of level $1-\varepsilon_t$
(introduced in \cite{ury2}) where (see \cite{philpmatos2} for instance)
$$
\mathcal{P}_t=\{p \in \mathcal{D}_t : p_j \leq \frac{1}{\varepsilon_t}, j=1,\ldots,M\}.
$$
In this case, the optimization problem \eqref{defqtcoherent} can be solved analytically, without resorting to an optimization step (once the values 
$\mathfrak{Q}_{t}(x_{1:t-1}, \xi_{t, j}), j=1,\ldots,M,$ are known, see \cite{philpmatos2} for details) and 
numerical simulations in Section 4.1.1 of \cite{shaptekaya2}
have shown that the corresponding subproblems are solved more quickly than if the minimization formula from \cite{ury1}, \cite{ury2} for the Conditional Value-at-Risk
was used. We refer to \cite{ruzshap2}, \cite{ruzshap1}, \cite{rom2}, \cite{guiguesrom10}
for the definition of the sets $\mathcal{P}_t$ corresponding to various popular risk measures.

Recalling definition \eqref{epsfatten} of the
the $\varepsilon$-fattening of a set $X$, we also make the following Assumption (H2) for $t=1,\ldots,T$:
\begin{itemize}
\item[1)] $\mathcal{X}_{t} \subset \mathbb{R}^n$ is nonempty, convex, and compact.
\item[2)] For every $x_{1:t} \in \mathbb{R}^{n} \times \ldots  \times \mathbb{R}^{n}$ the function
$f_t(x_{1:t}, \cdot)$ is measurable and for every $j=1,\ldots,M$, the function
$f_t(\cdot, \Psi_{t, j})$ is proper, convex, and lower semicontinuous.
\item[3)] For every $j=1,\ldots,M$, each component of the function $g_t(x_0, \cdot, \Psi_{t, j})$ is a
convex lower semicontinuous function.
\item[4)] There exists $\varepsilon>0$ such that:
\begin{itemize}
\item[4.1)] for every $j=1,\ldots,M$,
$$
\Big[\mathcal{X}_1 {\small{\times}} \ldots {\small{\times}} \mathcal{X}_{t-1} \Big]^{\varepsilon}{\small{\times}} \mathcal{X}_{t}  \subset \mbox{dom} \; f_t\Big(\cdot, \Psi_{t, j}\Big);
$$
\item[4.2)] for every $j=1,\ldots,M$, for every
$x_{1:t-1} \in \Big[\mathcal{X}_1 {\small{\times}} \ldots {\small{\times}} \mathcal{X}_{t-1}\Big]^{\varepsilon}$,
the set $X_t(x_{0:t-1}, \xi_{t, j})$ is nonempty.
\end{itemize}
\item[5)] If $t \geq 2$, for every $j=1,\ldots,M$, there exists
$$
{\bar x}_{t, j}=({\bar x}_{t, j, 1}, \ldots, {\bar x}_{t, j, t}) \in \mathcal{X}_1 \small{\times}\ldots \small{\times}\mathcal{X}_{t-1}\small{\times}\mbox{ri}(\mathcal{X}_t)
\cap \mbox{ri}(\{g_t(x_0,\cdot,\Psi_{t, j})\leq 0\})$$ such that $\bar x_{t, j, t} \in X_t(x_0, \bar x_{t, j, 1},\ldots,\bar x_{t, j, t-1}, \xi_{t, j})$.\\
\end{itemize}

As shown in Proposition \ref{convexityrec}, Assumption (H2) guarantees that 
for $t=2,\ldots,T$, recourse function $\mathcal{Q}_t$ is convex and Lipschitz  continuous on
the set $\Big[\mathcal{X}_1{\small{\times}} \ldots {\small{\times}} \mathcal{X}_{t-1}\Big]^{\hat{\varepsilon}}$
for every $0<{\hat{\varepsilon}}<\varepsilon$.
\begin{prop}\label{convexityrec} 
Under Assumption (H2), for $t=2,\ldots,T+1$, for every $0<{\hat{\varepsilon}}<\varepsilon$, the recourse function $\mathcal{Q}_t$ is convex,
finite on $\Big[\mathcal{X}_1 \small{\times} \ldots \small{\times}\mathcal{X}_{t-1}\Big]^{\hat{\varepsilon}}$,
and continuous on
$\Big[\mathcal{X}_1{\small{\times}} \ldots {\small{\times}} \mathcal{X}_{t-1}\Big]^{\hat{\varepsilon}}$.
\end{prop}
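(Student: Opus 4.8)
The plan is a backward induction on $t$ from $t=T+1$ down to $t=2$: one applies Proposition~\ref{continuityvalf} to each ``scenario'' value function $\mathfrak{Q}_t(\cdot,\xi_{t,j})$ and then transfers the conclusions to $\mathcal{Q}_t=\rho_t(\mathfrak{Q}_t(\cdot,\xi_t))$ via the dual representation \eqref{defqtcoherent}. Write $\mathbf{X}_\tau:=\mathcal{X}_1\times\cdots\times\mathcal{X}_\tau$, which is nonempty, convex and compact by (H2)-1). The base case $t=T+1$ is immediate since $\mathcal{Q}_{T+1}\equiv 0$. I would carry the induction hypothesis in the (equivalent) form: for every $0<\varepsilon'<\varepsilon$, $\mathcal{Q}_{t+1}$ is convex and finite on $[\mathbf{X}_t]^{\varepsilon'}$. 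The continuity clause then comes for free: a convex function finite on $[\mathbf{X}_t]^{\varepsilon'}$ is finite and convex on the open set $\intt([\mathbf{X}_t]^{\varepsilon'})$, hence continuous there, and $[\mathbf{X}_t]^{\hat\varepsilon}\subseteq\intt([\mathbf{X}_t]^{\varepsilon'})$ whenever $\hat\varepsilon<\varepsilon'$ (because $\dist(\cdot,\mathbf{X}_t)\le\hat\varepsilon$ forces $\dist(\cdot,\mathbf{X}_t)<\varepsilon'$, using compactness of $\mathbf{X}_t$). I would also record once the elementary inclusion $[\mathbf{X}_{t-1}]^{\delta}\times\mathcal{X}_t\subseteq[\mathbf{X}_t]^{\delta}$ for every $\delta>0$ (write a point of the left-hand side as $(a,c)+\delta(u,0)$ with $(a,c)\in\mathbf{X}_t$ and $\|u\|\le1$); this is what glues consecutive stages together.

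Fix now $t\in\{2,\ldots,T\}$, $0<\varepsilon'<\varepsilon$, $j\in\{1,\ldots,M\}$, and pick $\varepsilon''$ with $\varepsilon'<\varepsilon''<\varepsilon$. I would identify $\mathfrak{Q}_t(\cdot,\xi_{t,j})$ with the value function \eqref{vfunctionq}: take $X=\mathbf{X}_{t-1}$, $Y=\mathcal{X}_t$, the equality constraint $\sum_{\tau=1}^{t-1}A_{t,\tau,j}x_\tau+A_{t,t,j}x_t=b_{t,j}-A_{t,0,j}x_0$, the inequality constraint $g_t(x_0,\cdot,\Psi_{t,j})\le0$ (so that $S(x_{1:t-1})=X_t(x_{0:t-1},\xi_{t,j})$), and objective $f=F_t(\cdot,\Psi_{t,j})=f_t(\cdot,\Psi_{t,j})+\mathcal{Q}_{t+1}$ (if needed to literally meet hypothesis (H)-1), replace $\mathcal{Q}_{t+1}$ by $\mathcal{Q}_{t+1}+\mathbb{I}_{[\mathbf{X}_t]^{\varepsilon''}}$, which by the first paragraph is proper, convex and lower semicontinuous on $\mathbb{R}^{nt}$ since $\mathcal{Q}_{t+1}$ is continuous on a neighbourhood of the compact set $[\mathbf{X}_t]^{\varepsilon''}$). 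Then Assumption (H) of Section~\ref{propvaluefunction} holds: parts 1)--2) from (H2)-2), (H2)-3) and the induction hypothesis, and part 3) with radius $\varepsilon'$ because $[\mathbf{X}_{t-1}]^{\varepsilon'}\times\mathcal{X}_t\subseteq[\mathbf{X}_t]^{\varepsilon'}\subseteq[\mathbf{X}_t]^{\varepsilon''}$ together with (H2)-4.1) gives $[\mathbf{X}_{t-1}]^{\varepsilon'}\times\mathcal{X}_t\subseteq\dom f$. Moreover $S(x_{1:t-1})\neq\emptyset$ for every $x_{1:t-1}\in[\mathbf{X}_{t-1}]^{\varepsilon'}\subseteq[\mathbf{X}_{t-1}]^{\varepsilon}$ by (H2)-4.2), and the added indicator is harmless on $[\mathbf{X}_{t-1}]^{\varepsilon'}$ since every $x_t$ feasible for $\mathfrak{Q}_t(x_{1:t-1},\xi_{t,j})$ with $x_{1:t-1}\in[\mathbf{X}_{t-1}]^{\varepsilon'}$ satisfies $(x_{1:t-1},x_t)\in[\mathbf{X}_{t-1}]^{\varepsilon'}\times\mathcal{X}_t\subseteq[\mathbf{X}_t]^{\varepsilon''}$. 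Proposition~\ref{continuityvalf} (with $\varepsilon'$ in the role of its $\varepsilon$) then yields that $\mathfrak{Q}_t(\cdot,\xi_{t,j})$ is finite on $[\mathbf{X}_{t-1}]^{\varepsilon'}$, and the well-known convexity of \eqref{vfunctionq} under (H) gives that it is convex.

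It remains to pass to $\mathcal{Q}_t$. For $x_{1:t-1}\in[\mathbf{X}_{t-1}]^{\varepsilon'}$, \eqref{defqtcoherent} gives $\mathcal{Q}_t(x_{1:t-1})=\sup_{p\in\mathcal{P}_t}\sum_{j=1}^{M}p_j\Phi_{t,j}\,\mathfrak{Q}_t(x_{1:t-1},\xi_{t,j})$. For each fixed $p\in\mathcal{P}_t$ the inner expression is a convex combination (the weights $p_j\Phi_{t,j}\ge0$ sum to $1$) of the finite numbers $\mathfrak{Q}_t(x_{1:t-1},\xi_{t,j})$, hence lies between $\min_j\mathfrak{Q}_t(x_{1:t-1},\xi_{t,j})$ and $\max_j\mathfrak{Q}_t(x_{1:t-1},\xi_{t,j})$; since $\mathcal{P}_t\neq\emptyset$ this shows $\mathcal{Q}_t$ is finite on $[\mathbf{X}_{t-1}]^{\varepsilon'}$. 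Being a supremum of finite nonnegative combinations of convex functions, $\mathcal{Q}_t$ is convex — equivalently, the coherent risk measure $\rho_t$ preserves convexity of the family $x_{1:t-1}\mapsto\mathfrak{Q}_t(x_{1:t-1},\xi_{t,j})$. As $\varepsilon'<\varepsilon$ was arbitrary, $\mathcal{Q}_t$ is convex and finite on $[\mathbf{X}_{t-1}]^{\varepsilon'}$ for every $\varepsilon'<\varepsilon$, hence continuous on $[\mathbf{X}_{t-1}]^{\hat\varepsilon}$ for every $\hat\varepsilon<\varepsilon$ by the reduction in the first paragraph. This closes the induction step and the proof.

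The main obstacle is not conceptual but bookkeeping with the radii: one must choose $\varepsilon'<\varepsilon''<\varepsilon$ so that (H)-3) and the nonemptiness hypothesis of Proposition~\ref{continuityvalf} hold with a radius strictly below $\varepsilon$, while the product fattenings nest ($[\mathbf{X}_{t-1}]^{\delta}\times\mathcal{X}_t\subseteq[\mathbf{X}_t]^{\delta}$) and the induction hypothesis — stated on the closed fattenings $[\mathbf{X}_t]^{\varepsilon'}$ — is exploited through their interiors to recover continuity. The one genuinely delicate step is to ensure the objective in \eqref{vfunctionq} is a proper lower semicontinuous convex function on all of $\mathbb{R}^{nt}$ rather than merely on a neighbourhood of the compact set of interest; this is the role of the indicator $\mathbb{I}_{[\mathbf{X}_t]^{\varepsilon''}}$, and it is here that it matters that the induction hypothesis provides continuity of $\mathcal{Q}_{t+1}$ on a full neighbourhood of $[\mathbf{X}_t]^{\varepsilon''}$, not just on its interior.
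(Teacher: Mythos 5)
Your proof is correct, and it reaches the conclusion by a somewhat different route than the paper. The paper's proof of Proposition \ref{convexityrec} does not invoke Proposition \ref{continuityvalf} at all: for fixed $x_{1:t-1}$ in the $\hat\varepsilon$-fattening it argues directly that the feasible set $X_t(x_{0:t-1},\xi_{t,j})$ is nonempty and compact, that the objective $x_t\mapsto f_t(x_{1:t-1},x_t,\Psi_{t,j})+\mathcal{Q}_{t+1}(x_{1:t-1},x_t)$ is finite-valued and lower semicontinuous on $\mathcal{X}_t$ (the continuity of $\mathcal{Q}_{t+1}(x_{1:t-1},\cdot)$ on $\mathcal{X}_t$ being obtained from finiteness on a fattening $\mathcal{X}_t^{\tilde\varepsilon}$ with the explicit radius $\tilde\varepsilon=\sqrt{((\varepsilon+\hat\varepsilon)/2)^2-\hat\varepsilon^2}$), so that the infimum is finite by Weierstrass; convexity of each $\mathfrak{Q}_t(\cdot,\xi_{t,j})$, then of $\mathcal{Q}_t$ via monotonicity/convexity of $\rho_t$, and finally Lipschitz continuity on the $\hat\varepsilon$-fattening because it is a compact subset of the relative interior of $\mathrm{dom}\,\mathcal{Q}_t$. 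You instead feed each scenario value function $\mathfrak{Q}_t(\cdot,\xi_{t,j})$ into Proposition \ref{continuityvalf} with $X=\mathcal{X}_1\times\cdots\times\mathcal{X}_{t-1}$, $Y=\mathcal{X}_t$ and radius $\varepsilon'$, which forces you to verify (H)-1) for the composite objective; your device of replacing $\mathcal{Q}_{t+1}$ by $\mathcal{Q}_{t+1}+\mathbb{I}_{[\mathcal{X}_1\times\cdots\times\mathcal{X}_t]^{\varepsilon''}}$ is exactly what is needed there, and you correctly check that this modification does not change the value function on $[\mathcal{X}_1\times\cdots\times\mathcal{X}_{t-1}]^{\varepsilon'}$ (every feasible $x_t$ lies in $\mathcal{X}_t$, so the indicator vanishes at all feasible points), that the induction hypothesis at stage $t+1$ supplies continuity of $\mathcal{Q}_{t+1}$ on a closed fattening strictly inside the $\varepsilon$-fattening so the modified objective is proper, convex and lower semicontinuous on all of $\mathbb{R}^{nt}$, and that (H)-3) and the nonemptiness hypothesis hold with radius $\varepsilon'$ via (H2)-4.1)-4.2) and the product inclusion $[\mathcal{X}_1\times\cdots\times\mathcal{X}_{t-1}]^{\delta}\times\mathcal{X}_t\subseteq[\mathcal{X}_1\times\cdots\times\mathcal{X}_t]^{\delta}$. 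The passage from the $\mathfrak{Q}_t(\cdot,\xi_{t,j})$ to $\mathcal{Q}_t$ through the dual representation \eqref{defqtcoherent}, and the recovery of continuity from convexity plus finiteness on a strictly larger fattening, coincide with the paper's argument. What your route buys is that it avoids the paper's ad hoc radius computation and reuses Proposition \ref{continuityvalf} as a black box, at the price of the regularization needed to meet its global hypothesis (H)-1); what the paper's route buys is that it never has to worry about the behaviour of $\mathcal{Q}_{t+1}$ outside the fattened sets, since it only ever evaluates it on $\mathcal{X}_t$ and a small fattening thereof. One cosmetic remark: since the modified and true value functions agree on the convex set $[\mathcal{X}_1\times\cdots\times\mathcal{X}_{t-1}]^{\varepsilon'}$, the convexity you import from the "well-known" fact is convexity of $\mathfrak{Q}_t(\cdot,\xi_{t,j})$ on that set, which is all that is needed, but it is worth stating explicitly, as the paper instead derives convexity directly from (H2)-1)-3).
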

\begin{proof} The proof is by induction on $t$.
The result holds for $t=T+1$ since $\mathcal{Q}_{T+1} \equiv 0$. Now assume that for
some $t \in \{2,\ldots,T\}$, the function $\mathcal{Q}_{t+1}$
is convex, 
finite on
$\Big[\mathcal{X}_1 \small{\times} \ldots \small{\times}\mathcal{X}_{t}\Big]^{\hat{\varepsilon}}$,
and continuous on
$\Big[\mathcal{X}_1{\small{\times}} \ldots {\small{\times}} \mathcal{X}_{t}\Big]^{\hat{\varepsilon}}$
for every $0<{\hat{\varepsilon}}<\varepsilon$.
Take an arbitrary 
$0<{\hat{\varepsilon}}<\varepsilon$,
$x_{1:t-1} \in \Big[\mathcal{X}_1 {\small{\times}} \ldots {\small{\times}} \mathcal{X}_{t-1} \Big]^{{\hat{\varepsilon}}}$
and fix $j \in \{1,\ldots,M\}$.
Consider the optimization problem \eqref{defmathfrak} with $\xi_t=\xi_{t, j}$.
Note that the feasible set $X_t(x_{0:t-1}, \xi_{t, j})$ of this problem 
is nonempty (invoking (H2)-4.2)) and compact, since it is the intersection of the  compact set $\mathcal{X}_t$
(invoking (H2)-1)), an affine space, and a lower level set of $g_t(x_{0:t-1}, \cdot, \Psi_{t, j})$ which is closed since
this function $g_t(x_{0:t-1}, \cdot, \Psi_{t, j})$  is lower semicontinuous (using Assumption (H2)-3)).
Next observe that if $x_{1:t-1} \in \Big[\mathcal{X}_1 \small{\times} \ldots \small{\times}\mathcal{X}_{t-1}\Big]^{\hat{\varepsilon}}$
and $x_t \in \mathcal{X}_t^{\tilde \varepsilon}$ with ${\tilde \varepsilon}=\sqrt{\left( \frac{\varepsilon + {\hat \varepsilon}}{2} \right)^2 - {\hat \varepsilon}^2}>0$,
then $x_{1:t} \in \Big[\mathcal{X}_1 {\small{\times}} \ldots {\small{\times}} \mathcal{X}_{t} \Big]^{\frac{\varepsilon + {\hat \varepsilon}}{2}}$ with $(\varepsilon + {\hat \varepsilon}   )/2<\varepsilon$. 
Using this observation and the induction hypothesis, we have that $\mathcal{Q}_{t+1}(x_{1:t-1}, \cdot)$ is finite (and convex) on $\mathcal{X}_t^{\tilde \varepsilon}$
which implies that $\mathcal{Q}_{t+1}(x_{1:t-1}, \cdot)$ is Lipschitz continuous on $\mathcal{X}_t$.
It follows that the optimal value $\mathfrak{Q}_{t}(x_{1:t-1}, \xi_{t, j})$ of problem \eqref{defmathfrak} with $\xi_t=\xi_{t, j}$ is finite
because its objective function $x_t \rightarrow f_t(x_{1:t-1}, x_t,\Psi_{t, j})+\mathcal{Q}_{t+1}(x_{1:t-1}, x_t)$ takes finite values
on $\mathcal{X}_t$ (using (H2)-4.1), (H2)-2),  and the induction hypothesis) and is lower semicontinuous (using
(H2)-2)). Using Definition \eqref{defqtcoherent} of $\mathcal{Q}_{t}$, we deduce that $\mathcal{Q}_{t}(x_{1:t-1})$ is finite.
Since  $x_{1:t-1}$
was chosen arbitrarily
in $\Big[\mathcal{X}_1 {\small{\times}} \ldots {\small{\times}} \mathcal{X}_{t-1} \Big]^{\hat \varepsilon}$,
we have shown that $\mathcal{Q}_t$ is finite on 
$\Big[\mathcal{X}_1 {\small{\times}} \ldots {\small{\times}} \mathcal{X}_{t-1} \Big]^{\hat \varepsilon}$.

Next, we deduce from Assumptions (H2)-1), (H2)-2), and (H2)-3) that for every $j\in \{1,\ldots,M\}$, $\mathfrak{Q}_t(\cdot, \xi_{t, j})$ is convex
on $\Big[\mathcal{X}_1 {\small{\times}} \ldots {\small{\times}} \mathcal{X}_{t-1} \Big]^{\hat \varepsilon}$.
Since $\rho_t$ is coherent, it is monotone and convex, and 
$\mathcal{Q}_t(\cdot)=\rho_t(\mathfrak{Q}_t(\cdot, \xi_{t}))$ is convex
on $\Big[\mathcal{X}_1 {\small{\times}} \ldots {\small{\times}} \mathcal{X}_{t-1} \Big]^{\hat \varepsilon}$.
Since $\Big[\mathcal{X}_1 {\small{\times}}\ldots {\small{\times}} \mathcal{X}_{t-1}\Big]^{\hat \varepsilon}$ is a compact subset of the relative
interior of the domain of convex function $\mathcal{Q}_t$, we have that $\mathcal{Q}_t$ is Lipschitz continuous on
$\Big[\mathcal{X}_1 {\small{\times}}\ldots {\small{\times}} \mathcal{X}_{t-1}\Big]^{\hat \varepsilon}$.\hfill
\end{proof}

Recalling Assumption (H1), the distribution of $(\xi_2, \ldots, \xi_T)$ is discrete 
and the $M^{T-1}$ possible realizations of $(\xi_2, \ldots, \xi_T)$ can be organized
in a finite tree with the root node $n_0$ associated to a stage $0$ (with decision $x_0$ taken at that
node) having one child node $n_1$
associated to the first stage (with $\xi_1$ deterministic).
Algorithm 1 below is a sampling algorithm which, for
iteration $k \geq 1$, selects a set of nodes $(n_1^k, n_2^k, \ldots, n_T^k)$ of the scenario tree
(with $n_t^k$ a node of stage $t$) 
corresponding to a sample $({\tilde \xi}_1^k, {\tilde \xi}_2^k,\ldots, {\tilde \xi}_T^k)$
of $(\xi_1, \xi_2,\ldots, \xi_T)$. Since ${\tilde \xi}_1^k=\xi_1$, we have $n_1^k=n_1$ for all $k$.

In the sequel, we use the following notation:
$\mathcal{N}$ is the set of nodes and 
$\mathcal{P}:\mathcal{N} \rightarrow \mathcal{N}$ is the function 
associating to a node its parent node (the empty set for the root node).
We will denote by {\tt{Nodes}}$(t)$ the set of nodes for stage $t$ and
for a node $n$ of the tree, we denote by 
\begin{itemize}
\item $C(n)$ the set of its children nodes (the empty set for the leaves);
\item $x_n$ a decision taken at that node;
\item $\Phi_n$ the transition probability from the parent node of $n$ to $n$;
\item $\xi_n$ the realization of process $(\xi_t)$ at node $n$\footnote{Note that
to alleviate notation, the same notation $\xi_{\tt{Index}}$ is used to denote
the realization of the process at node {\tt{Index}} of the scenario tree and the value of the process $(\xi_t)$
for stage {\tt{Index}}. The context will allow us to know which concept is being referred to.
In particular, letters $n$ and $m$ will only be used to refer to nodes while $t$ will be used to refer to stages.}:
for a node $n$ of stage $t$, this realization $\xi_n$ is the concatenation of the realizations $\Psi_n$ of $\Psi_t$,
$b_n$ of $b_t$, and $A_{\tau, n}$ of $A_{t, \tau}$ for $\tau=0,1,\ldots,t$;
\item $\xi_{[n]}$ (resp. $x_{[n]}$) the history of the realizations of the process $(\xi_t)$ (resp. the history of the decisions) from the first stage node $n_1$ to node $n$:
for a node $n$ of stage $t$, the $i$-th component of $\xi_{[n]}$ (resp. $x_{[n]}$) is $\xi_{\mathcal{P}^{t-i}(n)}$ (resp. $x_{\mathcal{P}^{t-i}(n)}$) for $i=1,\ldots, t$;
\end{itemize}

We are now in a position to describe Algorithm 1 which is a decomposition algorithm
solving \eqref{pbinit0}. This algorithm exploits the convexity of recourse functions $\mathcal{Q}_{t},\;t=2,\ldots,T+1$,
building polyhedral lower approximations $\mathcal{Q}_{t}^{k},\;t=2,\ldots,T+1$, of these functions of the form
$$
\begin{array}{lll}
\mathcal{Q}_{t}^{k}(x_{1:t-1}) & = &\displaystyle \max_{0 \leq \ell \leq k}\;\Big( \theta_t^{\ell} + \langle \beta_t^{\ell}, x_{1:t-1}-x_{[n_{t-1}^{\ell}]}^{\ell} \rangle \Big)\\
& = &\displaystyle \max_{0 \leq \ell \leq k}\;\Big( \theta_t^{\ell} + \langle \beta_{t, 1}^{\ell}, x_{1:t-2}-x_{[n_{t-2}^{\ell}]}^{\ell} \rangle + \langle \beta_{t, 2}^{\ell}, x_{t-1}-x_{n_{t-1}^{\ell}}^{\ell} \rangle \Big),
\end{array}
$$ 
where $\beta_{t, 1}^{\ell} \in \mathbb{R}^{n(t-2)}$ (resp. $\beta_{t, 2}^{\ell} \in \mathbb{R}^{n}$) gathers the first
$n(t-2)$ (resp. last $n$) components of $\beta_t^{\ell}$.

Since $\mathcal{Q}_{T+1}\equiv 0$ is known, we have $\mathcal{Q}_{T+1}^k\equiv 0$ for all $k$, i.e.,
$\theta_{T+1}^k$ and $\beta_{T+1}^k$ are null for all $k \in \mathbb{N}$.
At iteration $k$, decisions $(x_{n_1^k}^k, \ldots,x_{n_T^k}^k)$ and
coefficients $(\theta_t^k, \beta_t^k),\;t=2,\ldots,T+1$, are computed
for a sample of nodes $(n_1^k, n_2^k, \ldots, n_T^k)$: $x_{n_t^k}^k$ is the decision taken at node $n_t^k$
replacing the (unknown) recourse function $\mathcal{Q}_{t+1}$ by 
$\mathcal{Q}_{t+1}^{k-1}$, available at the beginning of iteration $k$.

In Lemma \ref{lipqtk} below, we show that the coefficients $(\theta_t^k, \beta_t^k)$
computed in Algorithm 1 define valid cuts for $\mathcal{Q}_t$, i.e., $\mathcal{Q}_t \geq \mathcal{Q}_t^k$
for all $k \in \mathbb{N}$. To describe Algorithm 1, it is convenient to introduce for $t=2,\ldots, T$, the function
$\mathfrak{Q}_{t}^{k-1}$ defined as follows: $\mathfrak{Q}_{t}^{k-1}(x_{1:t-1}, \xi_t)$ is the optimal value of the optimization problem
\begin{equation} \label{eqdefapprox}
\left\{
\begin{array}{l}
\displaystyle \inf_{x_t}\;F_t^{k-1}(x_{1:t}, \Psi_t):=f_t(x_{1:t}, \Psi_{t}) + \mathcal{Q}_{t+1}^{k-1}(x_{1:t})\\
x_t \in \mathcal{X}_t, g_t(x_{0:t}, \Psi_t) \leq 0,\;\;\displaystyle  \sum_{\tau=0}^{t} \; A_{t, \tau} x_{\tau} = b_t.
\end{array}
\right.
\end{equation}
We also denote by $\mathfrak{Q}_{1}^{k-1}(x_{0}, \xi_1)$ the optimal value of the problem above for $t=1$.\\
\rule{\linewidth}{1pt}
{\textbf{Algorithm 1: Multistage stochastic decomposition algorithm to solve \eqref{pbinit0}.}}\\\newline
{\textbf{Initialization.}} Set $\mathcal{Q}_{t}^0 \equiv-\infty$ for $t=2,\ldots,T$, and
$\mathcal{Q}_{T+1}^0 \equiv 0$, i.e., set $\theta_{T+1}^0=0$, $\beta_{t+1}^0=0$  for $t=1,\ldots,T$, 
and $\theta_{t+1}^0=-\infty$ for $t=1,\ldots,T-1$.\\\newline
{\textbf{Loop.}}\\
{\textbf{For $k=1,2,\ldots,$}}\\
\hspace*{0.4cm}Sample a set of $T+1$ nodes $(n_0^k, n_1^k, \ldots, n_T^k)$ such that
$n_0^k$ is the root node, $n_1^k = n_1$ is the node\\
\hspace*{0.4cm}corresponding to the first stage, and for every
$t=2,\ldots,T$, node $n_t^k$ is a child node of node $n_{t-1}^k$.\\
\hspace*{0.4cm}This set of nodes is associated to a sample $({\tilde \xi}_1^k, {\tilde \xi}_2^k,\ldots, {\tilde \xi}_T^k)$
of $(\xi_1, \xi_2,\ldots, \xi_T)$, realizations of random\\
\hspace*{0.4cm}variables $(\xi_1^k, \xi_2^k,\ldots, \xi_T^k )$. \\
\hspace*{0.4cm}{\textbf{For}} $t=1,\ldots,T$,\\
\hspace*{0.8cm}{\textbf{For}} every node $n$ of stage $t-1$\\
\hspace*{1.2cm}{\textbf{For}} every child node $m$ of node $n$, compute an optimal solution
$x_m^k$ of \eqref{eqdefapprox} taking $(x_{0:t-1} , \xi_t)=$
\hspace*{1.6cm}$(x_0, x_{[n]}^k, \xi_m)$ solving 
\begin{equation} \label{defxtkj}
\begin{array}{l}
\left\{
\begin{array}{l}
\displaystyle \inf_{x_t} \;f_t(x_{[n]}^k, x_t, \Psi_{m}) + \mathcal{Q}_{t+1}^{k-1}(x_{[n]}^k, x_t)\\
g_t(x_0, x_{[n]}^k, x_t, \Psi_{m}) \leq 0,\\
A_{t, m} x_{t} = b_{m}-\displaystyle \sum_{\tau=0}^{t-1} \; A_{\tau, m} x_{\mathcal{P}^{t-\tau} (m)}^k,\\
x_t \in \mathcal{X}_t,
\end{array}
\right.
=\left\{
\begin{array}{l}
\displaystyle \inf_{x_t, z} \;f_t(x_{[n]}^k, x_{t}, \Psi_{m}) + z\\
g_t(x_0, x_{[n]}^k, x_t, \Psi_{m}) \leq 0,\;\;\;\hspace*{2.1cm}[\pi_{k, m, 1}]\\ 
A_{t, m} x_{t} = b_{m}-\displaystyle \sum_{\tau=0}^{t-1} \; A_{\tau, m} x_{\mathcal{P}^{t-\tau} (m)}^k,\;\;\;\hspace*{0.39cm}[\pi_{k, m, 2}]\\
z \geq \theta_{t+1}^{\ell} + \langle \beta_{t+1, 1}^{\ell}, x_{[n]}^{k}-x_{[n_{t-1}^{\ell}]}^{\ell} \rangle \\
\;\;\;\;\;\;+ \langle \beta_{t+1, 2}^{\ell}, x_{t}-x_{n_t^{\ell}}^{\ell} \rangle,0 \leq \ell \leq k-1,\;\hspace*{0.05cm}[\pi_{k, m, 3}]\\
x_t \in \mathcal{X}_t,
\end{array}
\right.
\end{array}
\end{equation}
\hspace*{1.6cm}with optimal value $\mathfrak{Q}_t^{k-1}(x_{[n]}^{k}, \xi_{m})$ for $t \geq 2$, $\mathfrak{Q}_1^{k-1}(x_0, \xi_{1})$ for $t=1$, and 
with the convention,
\hspace*{1.6cm}for $t=1$, that $(x_0, x_{[n_0^k]}^k, x_1)=(x_0, x_1)$ and  $x_{[n_{0}^{k}]}^k = x_{n_0^k}^k=x_{n_0}^k=x_0$ for all $k$. \\
\hspace*{1.6cm}In the above problem, we have denoted by $\pi_{k, m, 1}, \pi_{k, m, 2}$ and $\pi_{k, m, 3}$ the optimal Lagrange\\
\hspace*{1.6cm}multipliers associated with respectively the first, second, and third group of constraints.\\
\hspace*{1.2cm}{\textbf{End For}}\\
\hspace*{1.6cm}{\textbf{If}} $n=n_{t-1}^k$ and $t \geq 2$, compute for every $m \in C(n)$
$$
\begin{array}{lll}
\pi_{k , m}& = &f'_{t, x_{1:t-1}}\Big(x_{[n]}^k, x_{m}^k, \Psi_{m} \Big)+ g'_{t, x_{1:t-1}}\Big(x_0, x_{[n]}^k, x_m^k, \Psi_{m} \Big) \pi_{k , m, 1}\\
&&+ \left(\begin{array}{c}A_{1, m}^\transp\\\vdots\\A_{t-1, m}^\transp \end{array}   \right) \pi_{k , m, 2}+ [\beta_{t+1, 1}^0, \beta_{t+1, 1}^{1}, \ldots, \beta_{t+1, 1}^{k}]\pi_{k , m, 3}
\end{array}
$$
\hspace*{1.6cm}where $f'_{t, x_{1:t-1}}(x_{[n]}^k, x_{m}^k, \Psi_{m})$ is a 
subgradient of convex function $f_t ( \cdot, x_{m}^k, \Psi_{m} )$ at $x_{[n]}^k$ and
the $i$-th 
\hspace*{1.6cm}column of matrix $g'_{t, x_{1:t-1}}(x_0, x_{[n]}^k, x_{m}^k, \Psi_{m})$ is a subgradient at $x_{[n]}^k$ of the $i$-th component\\
\hspace*{1.6cm}of convex function $g_{t}(x_0, \cdot, x_{m}^k, \Psi_{m})$.\\
\hspace*{1.6cm}Compute $p_{k, m},  m \in C(n)$, solving 
$$
\begin{array}{lll}
\rho_t \left( \mathfrak{Q}_{t}^{k-1}(x_{[n]}^{k}, \xi_t )  \right)&= &\displaystyle \sup_{p \in \mathcal{P}_t} \;\displaystyle \sum_{m \in C(n)} p_m \Phi_{m} \mathfrak{Q}_{t}^{k-1}(x_{[n]}^k, \xi_{m})\\
&=&\displaystyle \sum_{m \in C(n)} p_{k , m} \Phi_{m} \mathfrak{Q}_{t}^{k-1}(x_{[n]}^{k}, \xi_{m}).
\end{array}
$$
\hspace*{1.6cm}Compute coefficients
\begin{equation}\label{formulathetak}
\begin{array}{l}
\theta_t^k  = \rho_t \left( \mathfrak{Q}_{t}^{k-1}(x_{[n]}^{k}, \xi_t) \right)=  \displaystyle \sum_{m \in C(n)} p_{k , m} \Phi_{m} \mathfrak{Q}_t^{k-1}(x_{[n]}^k, \xi_{m}) \mbox{ and }
\beta_t^k  = \displaystyle \sum_{m \in C(n)} p_{k , m} \Phi_{m}  \pi_{k , m}, 
\end{array}
\end{equation}
\hspace*{1.6cm}making up the new approximate recourse function
$$
\mathcal{Q}_{t}^{k}(x_{1:t-1})  = \displaystyle \max_{0 \leq \ell \leq k}\;\Big( \theta_t^{\ell} + \langle \beta_t^{\ell}, x_{1:t-1}-x_{[n_{t-1}^{\ell}]}^{\ell} \rangle \Big).
$$
\hspace*{1.6cm}{\textbf{End If}}\\
\hspace*{0.8cm}{\textbf{End For}}\\ 
\hspace*{0.4cm}{\textbf{End For}}\\
\hspace*{0.4cm}Compute $\theta_{T+1}^k=0$ and $\beta_{T+1}^k=0$.\\
{\textbf{End For}}\\
\rule{\linewidth}{1pt}

The convergence of Algorithm 1 is shown in the next section. Various modifications of Algorithm 1 have been proposed in the literature. For instance, it is possible to 
\begin{itemize}
\item[(i)] use a number of samples that varies along the iterations;
\item[(ii)] sample from the distribution of $\xi_t$ (instead of using all realizations $\xi_{t, 1}, \ldots, \xi_{t, M}$ of $\xi_t$)
to build the cuts \cite{chenpowell99}, \cite{philpot};
\item[(iii)] generate the trial points $x_{n_t^k}^k$ using the Abridged Nested Decomposition Method \cite{birgedono}.
\end{itemize}
The convergence proof of the next section can be extended to these variants of Algorithm 1.

We will assume that the sampling procedure in Algorithm 1 satisfies the following property:\\
\begin{itemize}
\item[(H3)] for every $j=1,\ldots,M$, for every $t=2,\ldots,T$, and for every $k \in \mathbb{N}^*$,
$
\mathbb{P}(\xi_t^k = \xi_{t, j}) = \Phi_{t, j}>0 \mbox{ with }\sum_{j=1}^{M} \Phi_{t, j}=1.
$ For every 
$t=2,\ldots,T$, and $k \geq 1$,
$$
\xi_t^k \mbox{ is independent on }\sigma(\xi_2^1,\ldots,\xi_T^1,\ldots,\xi_{2}^{k-1}, \ldots, \xi_{T}^{k-1},\xi_2^k,\ldots,\xi_{t-1}^k).
$$
\end{itemize}

In the following three lemmas, we show item (ii) announced in the introduction: functions $\mathcal{Q}_t^k$
for $k \geq T-t+1$ are Lipschitz continuous and have bounded subgradients. 
\begin{lemma}\label{lipqtk}
Consider the sequences $\mathcal{Q}_t^k, \theta_t^k$, and $\beta_t^k$ generated by Algorithm 1.
Under Assumptions (H2), then almost surely, for $t=2,\ldots,T+1$, the following holds:
\begin{itemize}
\item[(a)] $\mathcal{Q}_t^k$ is convex with
$\mathcal{Q}_t^{k} \leq \mathcal{Q}_{t}$ on 
$\Big[\mathcal{X}_1 \small{\times}\ldots \small{\times}\mathcal{X}_{t-1}\Big]^{\varepsilon}$ for all $k \geq 1$;
\item[(b)] the sequences
$(\theta_t^k)_{k \geq T-t+1}$, $(\beta_t^k)_{k \geq T-t+1}$, and
$(\pi_{k, m})_{k \geq T-t+1}$ for all $m$,
are bounded;
\item[(c)] for $k \geq T-t+1$,
$\mathcal{Q}_t^k$ is convex Lipschitz continuous on 
$\Big[\mathcal{X}_1 \small{\times} \ldots \small{\times}\mathcal{X}_{t-1}\Big]^{\varepsilon}$.
\end{itemize}
\end{lemma}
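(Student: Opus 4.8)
The plan is to argue by backward induction on $t$, from $t=T+1$ down to $t=2$, establishing (a), (b), (c) together. Every bound produced below will be uniform over the sampled scenarios, so the statement in fact holds for every realization of the sampling in Algorithm 1; the phrase ``almost surely'' is kept only to match the statement. The base case $t=T+1$ is immediate since $\mathcal{Q}_{T+1}^k\equiv 0$ satisfies (a)--(c) trivially. For the inductive step I would assume (a)--(c) hold for stage $t+1$ and proceed as follows.

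For (a) at stage $t$: introduce the convex function $\widetilde{\mathcal{Q}}_t^k(x_{1:t-1}):=\rho_t\big(\mathfrak{Q}_t^{k-1}(x_{1:t-1},\xi_t)\big)=\sup_{p\in\mathcal{P}_t}\sum_{m\in C(n_{t-1}^k)}p_m\Phi_m\,\mathfrak{Q}_t^{k-1}(x_{1:t-1},\xi_m)$. By the induction hypothesis (a), $\mathcal{Q}_{t+1}^{k-1}\leq\mathcal{Q}_{t+1}$ on $[\mathcal{X}_1\times\ldots\times\mathcal{X}_t]^{\varepsilon}$, so comparing \eqref{eqdefapprox} with \eqref{defmathfrak} (identical feasible sets, smaller objective) gives $\mathfrak{Q}_t^{k-1}(\cdot,\xi_m)\leq\mathfrak{Q}_t(\cdot,\xi_m)$, whence $\widetilde{\mathcal{Q}}_t^k\leq\mathcal{Q}_t$ by monotonicity of $\rho_t$. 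By \eqref{formulathetak}, $\theta_t^k=\widetilde{\mathcal{Q}}_t^k(x_{[n_{t-1}^k]}^k)$; and I would show $\beta_t^k\in\partial\widetilde{\mathcal{Q}}_t^k(x_{[n_{t-1}^k]}^k)$. The supremum over $p\in\mathcal{P}_t$ is attained at $p_k=(p_{k,m})$, so it suffices that $\beta_t^k=\sum_m p_{k,m}\Phi_m\pi_{k,m}$ be a subgradient at $x_{[n_{t-1}^k]}^k$ of $x_{1:t-1}\mapsto\sum_m p_{k,m}\Phi_m\mathfrak{Q}_t^{k-1}(x_{1:t-1},\xi_m)$, which follows once each $\pi_{k,m}\in\partial\big[\mathfrak{Q}_t^{k-1}(\cdot,\xi_m)\big](x_{[n_{t-1}^k]}^k)$. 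This last inclusion is exactly Lemma \ref{dervaluefunction} applied to the value function \eqref{defxtkj} viewed as a value function in the parameter $x_{1:t-1}$ with decision variables $(x_t,z)\in\mathcal{X}_t\times\mathbb{R}$: Assumption (H) for that problem follows from (H2)-1),2),3) and the induction hypothesis (a) (convexity of the epigraph constraints), the Slater-type condition is (H2)-5), the hypothesis $S(x_{[n_{t-1}^k]}^k)\neq\emptyset$ holds because $x_m^k$ is feasible, and the expression defining $\pi_{k,m}$ in Algorithm 1 is precisely the right-hand side of \eqref{caractsubQ} with $(\lambda,\mu)=\big(\pi_{k,m,2},(\pi_{k,m,1},\pi_{k,m,3})\big)$ the optimal multipliers of \eqref{defxtkj}. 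Hence $\theta_t^k+\langle\beta_t^k,\cdot-x_{[n_{t-1}^k]}^k\rangle\leq\widetilde{\mathcal{Q}}_t^k\leq\mathcal{Q}_t$ everywhere; since this holds at each iteration, every stage-$t$ cut is a valid affine minorant of $\mathcal{Q}_t$, so $\mathcal{Q}_t^k=\max_{0\leq\ell\leq k}(\theta_t^\ell+\langle\beta_t^\ell,\cdot-x_{[n_{t-1}^\ell]}^\ell\rangle)$ is convex and $\leq\mathcal{Q}_t$ on $[\mathcal{X}_1\times\ldots\times\mathcal{X}_{t-1}]^{\varepsilon}$, which is (a).

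For (b), fix $k\geq T-t+1$, so $k-1\geq T-(t+1)+1$; by the induction hypothesis (c), $\mathcal{Q}_{t+1}^{k-1}$ is a genuine finite maximum of affine functions, hence real valued on $\mathbb{R}^{nt}$. Therefore the feasible set of \eqref{defxtkj} is nonempty for every $x_{1:t-1}\in[\mathcal{X}_1\times\ldots\times\mathcal{X}_{t-1}]^{\varepsilon}$ (the variable $z$ is always feasible, and $X_t(x_{0:t-1},\xi_m)\neq\emptyset$ by (H2)-4.2)), so Proposition \ref{continuityvalf}, applied to $\mathfrak{Q}_t^{k-1}(\cdot,\xi_m)$ on a marginally smaller fattening $X^{\hat\varepsilon}$, $\hat\varepsilon<\varepsilon$, on which $\mathcal{Q}_{t+1}$ is finite (Proposition \ref{convexityrec}), bounds $\|\pi_{k,m}\|$ by a multiple of the oscillation $\sup_{X^{\hat\varepsilon}}\mathfrak{Q}_t^{k-1}(\cdot,\xi_m)-\inf_{X^{\hat\varepsilon}}\mathfrak{Q}_t^{k-1}(\cdot,\xi_m)$. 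To make this uniform in $k$, note the cuts only accumulate, so $\mathcal{Q}_{t+1}^{T-t}\leq\mathcal{Q}_{t+1}^{k-1}\leq\mathcal{Q}_{t+1}$ and hence $\mathfrak{Q}_t^{T-t}(\cdot,\xi_m)\leq\mathfrak{Q}_t^{k-1}(\cdot,\xi_m)\leq\mathfrak{Q}_t(\cdot,\xi_m)$, with the two outer functions finite (continuous) on the relevant compacts by the induction hypothesis and Proposition \ref{convexityrec}; this gives a bound on $\|\pi_{k,m}\|$ independent of $k$. Since $\beta_t^k=\sum_m p_{k,m}\Phi_m\pi_{k,m}$ with $p_{k,m}\geq0$ and $\sum_m p_{k,m}\Phi_m=1$, $\beta_t^k$ is a convex combination of the $\pi_{k,m}$ and obeys the same bound, and $\theta_t^k=\sum_m p_{k,m}\Phi_m\mathfrak{Q}_t^{k-1}(x_{[n_{t-1}^k]}^k,\xi_m)$ is bounded because $x_{[n_{t-1}^k]}^k\in\mathcal{X}_1\times\ldots\times\mathcal{X}_{t-1}$ and $\mathfrak{Q}_t^{k-1}(\cdot,\xi_m)$ is sandwiched as above. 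Statement (c) for stage $t$ is then immediate: for $k\geq T-t+1$ the cuts with $\ell<T-t+1$ carry $\theta_t^\ell=-\infty$ and drop out, so $\mathcal{Q}_t^k$ is a finite maximum of finitely many affine functions whose slopes $\beta_t^\ell$ are bounded uniformly by (b), hence convex and Lipschitz on $[\mathcal{X}_1\times\ldots\times\mathcal{X}_{t-1}]^{\varepsilon}$.

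The main obstacle is the step in (a) showing that $\pi_{k,m}$, as literally written in Algorithm 1, lies in $\partial\big[\mathfrak{Q}_t^{k-1}(\cdot,\xi_m)\big](x_{[n_{t-1}^k]}^k)$: this needs the Slater point supplied by (H2)-5) to remain a Slater point after adjoining the auxiliary variable $z$ and the epigraph cuts, and it needs the term-by-term identification of $\pi_{k,m}$ with the right-hand side of \eqref{caractsubQ} (subgradients of $f_t$ and of the components of $g_t$ in the argument $x_{1:t-1}$; the transposed equality blocks $[A_{1,m}^\transp;\ldots;A_{t-1,m}^\transp]$ coming from $\sum_{\tau}A_{\tau,m}x_{\mathcal{P}^{t-\tau}(m)}$; and the matrix $[\beta_{t+1,1}^0,\ldots,\beta_{t+1,1}^{k}]$ of $x_{1:t-1}$-gradients of the cut constraints). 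A secondary, purely technical, point is the $\varepsilon$/$\hat\varepsilon$ bookkeeping in (b), needed so that Propositions \ref{convexityrec} and \ref{continuityvalf} can be invoked on a set where $\mathcal{Q}_{t+1}$ is known to be finite.
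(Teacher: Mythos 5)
Your proposal is correct and follows essentially the same route as the paper's proof: backward induction, validity of the cuts via monotonicity of $\rho_t$ together with Lemma \ref{dervaluefunction} identifying $\pi_{k,m}$ as a subgradient of $\mathfrak{Q}_t^{k-1}(\cdot,\xi_m)$, and boundedness obtained by sandwiching $\mathfrak{Q}_t^{T-t}(\cdot,\xi_m)\leq\mathfrak{Q}_t^{k-1}(\cdot,\xi_m)\leq\mathfrak{Q}_t(\cdot,\xi_m)$ and using finiteness of the recourse functions on $\varepsilon$-fattened compacts (Propositions \ref{continuityvalf} and \ref{convexityrec}). The only cosmetic differences (bounding $\beta_t^k$ as a convex combination of the $\pi_{k,m}$ rather than by plugging a test point into the cut inequality, and the $T-t$ versus $T-t+1$ index in the lower envelope) do not change the argument.
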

\begin{proof} 
We show the result by induction on $k$ and $t$.
For $t=T+1$, and $k \geq 0$, $\theta_{t}^k$ and 
$\beta_{t}^k$ are 
bounded since they are null (recall that $\mathcal{Q}_{T+1}^k$ is null for all $k \geq 0$) and
$\mathcal{Q}_{T+1}^k = \mathcal{Q}_{T+1} \equiv 0$ is convex and Lipschitz continuous on
$\mathcal{X}_1{\small{\times}}\ldots {\small{\times}} \mathcal{X}_{T}$
for $k \geq 0$.
Assume now that for some $t \in \{1, \ldots,T\}$ and $k \geq T-t+1$, the functions $\mathcal{Q}_{t+1}^j$
for $T-t \leq j \leq k-1$ are convex Lipschitz continuous on 
$\Big[\mathcal{X}_1 \small{\times} \ldots \small{\times}\mathcal{X}_{t}\Big]^{\varepsilon}$
with $\mathcal{Q}_{t+1}^j  \leq \mathcal{Q}_{t+1}$.
We show that (i) $\theta_t^k$ and $\beta_t^k$ are well defined and bounded;
(ii) $\mathcal{Q}_t^k$ is convex Lipschitz continuous on 
$\Big[\mathcal{X}_1 \small{\times} \ldots \small{\times}\mathcal{X}_{t-1}\Big]^{\varepsilon}$;
(iii) $\mathcal{Q}_t \geq \mathcal{Q}_t^k$ on 
$\Big[\mathcal{X}_1 \small{\times} \ldots \small{\times}\mathcal{X}_{t-1}\Big]^{\varepsilon}$.

Take an arbitrary $x_{1:t-1} \in \Big[\mathcal{X}_1 \small{\times} \ldots \small{\times}\mathcal{X}_{t-1}\Big]^{\varepsilon}$.
Since $\mathcal{Q}_{t+1} \geq \mathcal{Q}_{t+1}^{k-1}$ on
$\Big[\mathcal{X}_1 \small{\times} \ldots \small{\times}\mathcal{X}_{t}\Big]^{\varepsilon}$,
using definition \eqref{defmathfrak} of $\mathfrak{Q}_t$ and the definition of $\mathfrak{Q}_t^{k}$, we have 
\begin{equation} \label{ineqrfak}
\mathfrak{Q}_{t}(x_{1:t-1}, \cdot) \geq \mathfrak{Q}_{t}^{k-1}(x_{1:t-1}, \cdot)
\end{equation}
and using the monotonicity of $\rho_t$ (recall that $\rho_t$ is coherent)
\begin{equation}\label{inequalityqfrac}
\begin{array}{lll}
\mathcal{Q}_{t}(x_{1:t-1})=\rho_t\Big(  \mathfrak{Q}_{t}(x_{1:t-1}, \xi_t) \Big) & \geq & \rho_t \Big( \mathfrak{Q}_{t}^{k-1}(x_{1:t-1}, \xi_t) \Big)\\
& \geq & \displaystyle \sup_{p \in \mathcal{P}_t} \;\displaystyle \sum_{j=1}^{M} p_j \Phi_{t, j} \mathfrak{Q}_{t}^{k-1}(x_{1:t-1}, \xi_{t, j}).
\end{array}
\end{equation}
Using Assumptions (H2)-1), 2), 3), 4.1), 4.2)
and the fact that $\mathcal{Q}_{t+1}^{k-1}$ is Lipschitz continuous on the compact set 
$\Big[\mathcal{X}_1 \small{\times} \ldots \small{\times}\mathcal{X}_{t}\Big]^{\varepsilon}$
(induction hypothesis), we have that $\mathfrak{Q}_{t}^{k-1}(x_{1:t-1}, \xi_{t, j})$ is finite for all $j \in \{1,\ldots,M\}$.
Recalling that 
$$\theta_t^k  = \displaystyle \sum_{m \in C(n)} p_{k , m} \Phi_{m} \mathfrak{Q}_t^{k-1}(x_{[n]}^k, \xi_{m})$$
with $x_{[n]}^k \in \mathcal{X}_1 \small{\times} \ldots \small{\times}\mathcal{X}_{t-1} \subset \Big[\mathcal{X}_1 \small{\times} \ldots \small{\times}\mathcal{X}_{t-1}\Big]^{\varepsilon}$,
it follows that $\theta_t^k$
is finite.
Next, using Assumptions (H2)-2), 3), for every $j \in \{1,\ldots,M\}$, the function $\mathfrak{Q}_{t}^{k}(\cdot, \xi_{t, j})$ is convex.
Since it is finite on 
$\Big[\mathcal{X}_1 \small{\times} \ldots \small{\times}\mathcal{X}_{t-1}\Big]^{\varepsilon}$,
it is Lipschitz continuous on 
$\mathcal{X}_1 {\small{\times}} \ldots {\small{\times}} \mathcal{X}_{t-1}$.
This function is thus subdifferentiable on 
$\Big[\mathcal{X}_1 \small{\times} \ldots \small{\times}\mathcal{X}_{t-1}\Big]^{\varepsilon}$
and using Lemma \ref{dervaluefunction}, whose assumptions are satisfied,
$\pi_{k , m}$ is a subgradient of $\mathfrak{Q}_{t}^{k-1}(\cdot, \xi_{m})$
at $x_{[n_{t-1}^k]}^k$, i.e., setting $n=n_{t-1}^k$, for every $x_{1:t-1} \in \Big[\mathcal{X}_1 \small{\times} \ldots \small{\times}\mathcal{X}_{t-1}\Big]^{\varepsilon}$, 
we have
\begin{equation}\label{relasubgradqt}
\mathfrak{Q}_t^{k-1}(x_{1:t-1}, \xi_{m}) 
\geq \mathfrak{Q}_t^{k-1}(x_{[n]}^k, \xi_{m}) + \langle \pi_{k , m},  x_{1:t-1} - x_{[n]}^{k} \rangle.
\end{equation}
Plugging this inequality into \eqref{inequalityqfrac}, still denoting $n=n_{t-1}^k$,
we obtain for $x_{1:t-1} \in \Big[\mathcal{X}_1 \small{\times} \ldots \small{\times}\mathcal{X}_{t-1}\Big]^{\varepsilon}$:
$$
\begin{array}{lcl} 
\mathcal{Q}_{t}(x_{1:t-1}) & \geq  & \displaystyle \sup_{p \in \mathcal{P}_t} \;\displaystyle \sum_{j=1}^M p_j \Phi_{t, j} \mathfrak{Q}_{t}^{k-1}(x_{1:t-1}, \xi_{t, j})\\
&=  &  \displaystyle \sup_{p \in \mathcal{P}_t} \; \displaystyle \sum_{m \in C(n)} p_{m} \Phi_{m} \mathfrak{Q}_{t}^{k-1}(x_{1:t-1}, \xi_{m}) \\
& \geq  & \displaystyle \sum_{m \in C(n)} p_{k , m} \Phi_{m} \mathfrak{Q}_{t}^{k-1}(x_{1:t-1}, \xi_{m}) \mbox{ since }p_k=(p_{k, m})_{m \in C(n)} \in \mathcal{P}_t\\
& \stackrel{\eqref{relasubgradqt}}{\geq}  & \displaystyle \sum_{m \in C(n)} p_{k , m} \Phi_{m} \mathfrak{Q}_t^{k-1}(x_{[n]}^k, \xi_{m}) + \displaystyle \sum_{m \in C(n)} p_{k, m} \Phi_{m} \langle \pi_{k, m}, x_{1:t-1} - x_{[n]}^{k}\rangle\\
& \geq  & \theta_t^k + \langle \beta_t^k, x_{1:t-1}-x_{[n]}^k \rangle
\end{array}
$$
using the definitions of $\theta_t^k$ and  $\beta_t^k$.
If $\beta_t^k=0$ then $\beta_t^k$ is bounded and if $\beta_t^k \neq 0$, plugging
$x_{1:t-1}=x_{[n]}^k + \frac{\varepsilon}{2} \frac{\beta_t^k}{\|\beta_t^k\|}\in \Big[\mathcal{X}_1 \small{\times} \ldots \small{\times}\mathcal{X}_{t-1}\Big]^{\varepsilon/2}$
in the above inequality, where $\varepsilon$ is defined in (H2)-4.2), we obtain
\begin{equation} \label{uboundbeta}
\|\beta_t^k\| \leq \frac{2}{\varepsilon}\left(\mathcal{Q}_{t}\Big(x_{[n]}^k + \frac{\varepsilon}{2} \frac{\beta_t^k}{\|\beta_t^k\|}\Big)-\theta_t^k \right).
\end{equation}
From Proposition \ref{convexityrec}, $\mathcal{Q}_t$ is finite
on $\Big[\mathcal{X}_1 \small{\times} \ldots \small{\times}\mathcal{X}_{t-1}\Big]^{\varepsilon/2}$.
Since $\theta_t^k$ is finite, \eqref{uboundbeta} shows that $\beta_t^k$ is bounded:
\begin{equation} \label{boundbeta}
\|\beta_t^k\| \leq \frac{2}{\varepsilon}\left(\displaystyle \sup_{x_{1:t-1} \in [\mathcal{X}_1 \small{\times} \ldots \small{\times}\mathcal{X}_{t-1}]^{\varepsilon/2}} \mathcal{Q}_{t}\Big(x_{1:t-1}\Big)-\theta_t^k \right).
\end{equation}
This achieves the induction step.
Gathering our observations, we have shown that $\mathcal{Q}_t \geq \mathcal{Q}_t^k$ for all $k \in \mathbb{N}$ and that
$\mathcal{Q}_t^k$ is Lipschitz continuous for $k \geq T-t+1$.

Finally, using Proposition \ref{continuityvalf}, we have that $\pi_{k, m}$ is bounded.
More precisely,
If $\pi_{k, m} \neq 0$, then relation \eqref{relasubgradqt} written for 
$x_{1:t-1}={\tilde x}_{1:t-1}^{k, m}=x_{[n]}^k + \frac{\varepsilon}{2} \frac{\pi_{k , m}}{\|\pi_{k, m}\|} \in \Big[ \mathcal{X}_1  {\small{\times}} \ldots {\small{\times}} \mathcal{X}_{t-1}\Big]^{\varepsilon/2}$
gives for $k \geq T-t+2$,
$$
\|\pi_{k, m}\| \leq \frac{2}{\varepsilon}\Big(\mathfrak{Q}_t({\tilde x}_{1:t-1}^{k, m}, \xi_{m})- \mathfrak{Q}_t^{T-t+1}(x_{[n ]}^k, \xi_{m})    \Big),
$$
where we have used the fact that $\mathfrak{Q}_t^{k} \leq \mathfrak{Q}_t$ and 
$\mathfrak{Q}_t^{k+1}(\cdot, \xi_{m}) \geq \mathfrak{Q}_t^{k}(\cdot, \xi_{m})$, for all $k \in \mathbb{N}$.
In the proof of Proposition \ref{convexityrec}, we have shown that
for every $t=2,\ldots,T$, and $j=1,\ldots,M$, the function $\mathfrak{Q}_t(\cdot, \xi_{t, j})$ is finite on the compact
set $[\mathcal{X}_1 \small{\times} \ldots \small{\times}\mathcal{X}_{t-1}]^{\varepsilon/2}$.
Also, we have just shown that for every $t=2,\ldots,T,$ and $j=1,\ldots,M$,
the function $\mathfrak{Q}_t^{T-t+1}(\cdot, \xi_{t, j})$ is continuous on the compact
set $\mathcal{X}_1  {\small{\times}} \ldots {\small{\times}} \mathcal{X}_{t-1}$.
It follows that for every $k \geq T-t+1$ and node $m$, we have for $\pi_{k, m}$ the upper bound
\begin{equation} \label{uppboundpitkj}
\begin{array}{l}
\|\pi_{k, m}\| \leq \displaystyle \max_{t=2,\ldots,T,j=1,\ldots,M}\frac{2 M(t,j)}{\varepsilon}\mbox{ where }\\
M(t,j)=\displaystyle \max_{x_{1:t-1} \in [\mathcal{X}_1 \small{\times} \ldots \small{\times}\mathcal{X}_{t-1}]^{\varepsilon/2}}  \mathfrak{Q}_t(x_{1:t-1}, \xi_{t, j}) -  \min_{x_{1:t-1} \in  \mathcal{X}_1  {\small{\times}} \ldots {\small{\times}} \mathcal{X}_{t-1}} \mathfrak{Q}_t^{T-t+1}(x_{1:t-1}, \xi_{t, j}).
\end{array}
\end{equation}
\hfill
\end{proof}
\begin{rem} In the case when the cuts are computed in a backward pass using 
approximate recourse functions $\mathcal{Q}_{t+1}^k$ instead of $\mathcal{Q}_{t+1}^{k-1}$ for iteration $k$, we can guarantee that 
$\theta_t^k$ and $\beta_t^k$ are bounded for all $t=2,\ldots,T+1$, and $k \geq 1$ (for $k=0$, we have $\beta_t^k=0$ but $\theta_t^k=-\infty$ is not bounded for $t \leq T$).
\end{rem}

The following lemma will be useful in the sequel:
\begin{lemma}\label{eqqeqthetlemma} Consider the sequences $\mathcal{Q}_t^k, x_{[n_t^k]}^k$, and  $\theta_t^k$ generated by Algorithm 1.
Under Assumptions (H2), for $t=2,\ldots,T$, and for all $k \geq 1$, we have
\begin{equation}\label{eqqeqthet}
\mathcal{Q}_{t}^{k}( x_{[n_{t-1}^k]}^k)  = \theta_t^k.
\end{equation}
\end{lemma}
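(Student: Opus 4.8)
The plan is to prove both inequalities $\mathcal{Q}_t^k(x_{[n_{t-1}^k]}^k)\ge\theta_t^k$ and $\mathcal{Q}_t^k(x_{[n_{t-1}^k]}^k)\le\theta_t^k$; write $n=n_{t-1}^k$ throughout. The lower bound is immediate: in the defining formula $\mathcal{Q}_t^k(x_{1:t-1})=\max_{0\le\ell\le k}\big(\theta_t^\ell+\langle\beta_t^\ell,x_{1:t-1}-x_{[n_{t-1}^\ell]}^\ell\rangle\big)$, the term $\ell=k$ evaluated at $x_{1:t-1}=x_{[n]}^k$ is exactly $\theta_t^k$, so $\mathcal{Q}_t^k(x_{[n]}^k)\ge\theta_t^k$.

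For the matching upper bound I would introduce the convex function $g_t^{j}(x_{1:t-1}):=\rho_t\big(\mathfrak{Q}_t^{j}(x_{1:t-1},\xi_t)\big)=\sup_{p\in\mathcal{P}_t}\sum_{m\in C(n)}p_m\Phi_m\,\mathfrak{Q}_t^{j}(x_{1:t-1},\xi_m)$ and establish two facts. First, each cut is dominated by $g_t$ at the preceding index: for $\ell\ge1$ and all $x_{1:t-1}\in[\mathcal{X}_1\times\cdots\times\mathcal{X}_{t-1}]^\varepsilon$ one has $\theta_t^\ell+\langle\beta_t^\ell,x_{1:t-1}-x_{[n_{t-1}^\ell]}^\ell\rangle\le g_t^{\ell-1}(x_{1:t-1})$; this is precisely the intermediate portion of the chain displayed in the proof of Lemma \ref{lipqtk} (the steps running from $\rho_t(\mathfrak{Q}_t^{k-1}(\cdot,\xi_t))$ down to $\theta_t^k+\langle\beta_t^k,\cdot-x_{[n]}^k\rangle$, using that $p_k\in\mathcal{P}_t$ and that $\pi_{k,m}$ is a subgradient of $\mathfrak{Q}_t^{k-1}(\cdot,\xi_m)$ at $x_{[n]}^k$), read with $k$ replaced by $\ell$; for the degenerate early indices ($\ell<T-t+1$, where $\theta_t^\ell=-\infty$, and $\ell=0$) the affine term is $-\infty$ and the inequality is trivial. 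Second, $g_t^{j}$ is nondecreasing in $j$: since the $\mathcal{Q}_{t+1}^{j}$ are nested maxima we have $\mathcal{Q}_{t+1}^{j-1}\le\mathcal{Q}_{t+1}^{j}$, hence $\mathfrak{Q}_t^{j-1}(\cdot,\xi)\le\mathfrak{Q}_t^{j}(\cdot,\xi)$ by comparing the two instances of problem \eqref{eqdefapprox} over the same feasible set, and then $g_t^{j-1}\le g_t^{j}$ because the coherent risk measure $\rho_t$ is monotone.

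Combining the two facts at the point $x_{[n]}^k\in\mathcal{X}_1\times\cdots\times\mathcal{X}_{t-1}$, each of the $k+1$ affine terms defining $\mathcal{Q}_t^k(x_{[n]}^k)$ is bounded above by $g_t^{\ell-1}(x_{[n]}^k)\le g_t^{k-1}(x_{[n]}^k)$, so $\mathcal{Q}_t^k(x_{[n]}^k)\le g_t^{k-1}(x_{[n]}^k)$; and by the defining formula \eqref{formulathetak}, $g_t^{k-1}(x_{[n]}^k)=\rho_t\big(\mathfrak{Q}_t^{k-1}(x_{[n]}^k,\xi_t)\big)=\theta_t^k$. Together with the lower bound this yields $\mathcal{Q}_t^k(x_{[n_{t-1}^k]}^k)=\theta_t^k$. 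I do not anticipate any genuine obstacle; the only points needing a line of care are the bookkeeping for the early iterations $k<T-t+1$ (where both sides equal $-\infty$ and the identity is vacuous) and verifying that the inequality ``cut $\ell$ $\le g_t^{\ell-1}$'' is quoted at the correct index and on a set containing $x_{[n_{t-1}^k]}^k$, which holds since that point lies in $\mathcal{X}_1\times\cdots\times\mathcal{X}_{t-1}$ by construction.
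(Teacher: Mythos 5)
Your proposal is correct and follows essentially the same route as the paper: the paper's proof likewise shows that for each $\ell<k$ the cut $\theta_t^\ell+\langle\beta_t^\ell,\cdot-x_{[n_{t-1}^\ell]}^\ell\rangle$ evaluated at $x_{[n_{t-1}^k]}^k$ is dominated by $\theta_t^k$, by chaining the monotonicity $\mathfrak{Q}_t^{k-1}\ge\mathfrak{Q}_t^{\ell-1}$, the subgradient inequality for $\pi_{\ell,m}$, and the feasibility $p_\ell\in\mathcal{P}_t$ — exactly your Facts 1 and 2, merely without naming the intermediate function $g_t^{j}$. The only step you leave implicit that the paper states explicitly is the use of interstage independence (H1) to identify the sum over $C(n_{t-1}^k)$ with that over $C(n_{t-1}^\ell)$, but your definition of $g_t^{j}$ via $\rho_t$ absorbs this harmlessly.
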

\begin{proof} We use the short notation $x_{1:t-1}^k = x_{[n_{t-1}^k]}^k$ and $n=n_{t-1}^k$.
Observe that by construction $\mathfrak{Q}_{t}^{k} \geq \mathfrak{Q}_{t}^{k-1}$ for every $t=2,\ldots,T+1$,
and every $k \in \mathbb{N}^{*}$. It follows that for fixed $0 \leq \ell \leq k$,
$$
\begin{array}{lll}
\theta_t^k  &= & \displaystyle \sup_{p \in \mathcal{P}_t} \;\displaystyle \sum_{m \in C(n)} p_{m} \Phi_{m} \mathfrak{Q}_{t}^{k-1}(x_{[n]}^k, \xi_{m})\\
&\geq & \displaystyle \sup_{p \in \mathcal{P}_t} \;\
\displaystyle \sum_{m \in C(n)} p_m \Phi_{m} \mathfrak{Q}_{t}^{\ell -1}(x_{1:t-1}^k, \xi_{m})\\
& = & \displaystyle \sup_{p \in \mathcal{P}_t} \;\
\displaystyle \sum_{m \in C(n_{t-1}^{\ell})} p_m \Phi_{m} \mathfrak{Q}_{t}^{\ell -1}(x_{1:t-1}^k, \xi_{m})\;\;\;(\mbox{since }(\xi_t) \mbox{ is interstage independent})\\
&\geq & \displaystyle \sup_{p \in \mathcal{P}_t} \;\displaystyle \sum_{m \in C(n_{t-1}^{\ell})} p_m \Phi_{m}   \left(\mathfrak{Q}_t^{\ell -1}(x_{1:t-1}^{\ell}, \xi_{m}) + \langle \pi_{\ell, m}, x_{1:t-1}^k - x_{1:t-1}^{\ell}\rangle \right)
\end{array}
$$
using the convexity of function $\mathfrak{Q}_t^{\ell-1}(\cdot, \xi_{m})$ and the fact that 
$\pi_{\ell, m}$ is a subgradient of this function at $x_{1:t-1}^{\ell}$.
Recalling that 
$$
\begin{array}{l}
\theta_t^\ell  = \rho_t \left( \mathfrak{Q}_{t}^{\ell -1}(x_{1:t-1}^{\ell}, \xi_t) \right)=  \displaystyle \sum_{m \in C( n_{t-1}^{\ell} )} p_{\ell, m} \Phi_{m} \mathfrak{Q}_t^{\ell-1}(x_{1:t-1}^\ell, \xi_{m})\;\;\mbox{and}\;\;\beta_t^\ell  = \displaystyle \sum_{m \in C( n_{t-1}^{\ell} )} p_{\ell, m} \Phi_{m}  \pi_{\ell, m},
\end{array}
$$
we get
$$
\begin{array}{lll}
\theta_t^k  &\geq &  \displaystyle \sum_{m \in C(n_{t-1}^{\ell})} p_{\ell, m} \Phi_{m}   \left(\mathfrak{Q}_t^{\ell -1}(x_{1:t-1}^{\ell}, \xi_{m}) + \langle \pi_{\ell, m}, x_{1:t-1}^k - x_{1:t-1}^{\ell}\rangle\right)\\
&\geq &  \theta_t^\ell + \langle \beta_t^\ell, x_{1:t-1}^{k}-x_{1:t-1}^\ell \rangle
\end{array}
$$
and 
$$
\mathcal{Q}_{t}^{k}(x_{1:t-1}^{k})  = \max\Big( \theta_t^k, \theta_t^\ell + \langle \beta_t^\ell, x_{1:t-1}^{k}-x_{1:t-1}^\ell \rangle,\;\ell=0,\ldots,k-1  \Big)=\theta_t^k.
$$
\hfill
\end{proof}
\begin{lemma}\label{lipforqtk} For $t=2,\ldots,T$, and $k \geq T-t+1$, the functions $\mathcal{Q}_t^k$ are
$L$-Lipschitz with $L$ given by
$$
\frac{2}{\varepsilon}\ \max_{t=2,\ldots,T} \left(\sup_{x_{1:t-1} \in [\mathcal{X}_1 \small{\times} \ldots \small{\times}\mathcal{X}_{t-1}]^{\varepsilon/2}} \mathcal{Q}_{t}(x_{1:t-1})-  \min_{x_{1:t-1} \in \mathcal{X}_1 \small{\times} \ldots \small{\times}\mathcal{X}_{t-1}} \mathcal{Q}_{t}^{T-t+1}(x_{1:t-1})\right).
$$
\end{lemma}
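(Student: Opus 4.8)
The plan is to reduce the Lipschitz bound for $\mathcal{Q}_t^k$ to a uniform bound on the norms of the cut gradients $\beta_t^\ell$, and then to estimate those gradients using results already established.

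First I would recall that, for $k\ge T-t+1$, $\mathcal{Q}_t^k$ is the pointwise maximum over $0\le\ell\le k$ of the affine functions $x_{1:t-1}\mapsto\theta_t^\ell+\langle\beta_t^\ell,x_{1:t-1}-x_{[n_{t-1}^\ell]}^\ell\rangle$. The summands with $\theta_t^\ell=-\infty$ — namely those with $\ell<T-t+1$ — are identically $-\infty$, so they do not contribute; the maximum is finite at $x_{[n_{t-1}^k]}^k$, where it equals $\theta_t^k$ by Lemma \ref{eqqeqthetlemma}. Since a finite pointwise maximum of affine functions whose gradients have norm at most $c$ is $c$-Lipschitz, the Lipschitz modulus of $\mathcal{Q}_t^k$ is $\max\{\|\beta_t^\ell\|:\,T-t+1\le\ell\le k\}$, and it suffices to prove $\|\beta_t^\ell\|\le L$ for every $t\in\{2,\dots,T\}$ and every $\ell\ge T-t+1$.

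Next I would invoke inequality \eqref{boundbeta}, which is established inside the induction of Lemma \ref{lipqtk} for each index $\ell\ge T-t+1$:
\[
\|\beta_t^\ell\|\le\frac{2}{\varepsilon}\Bigl(\sup_{x_{1:t-1}\in[\mathcal{X}_1\times\cdots\times\mathcal{X}_{t-1}]^{\varepsilon/2}}\mathcal{Q}_t(x_{1:t-1})-\theta_t^\ell\Bigr),
\]
and then bound $\theta_t^\ell$ from below uniformly in $\ell$. For the latter I would use Lemma \ref{eqqeqthetlemma} to write $\theta_t^\ell=\mathcal{Q}_t^\ell(x_{[n_{t-1}^\ell]}^\ell)$, the monotonicity $\mathcal{Q}_t^\ell\ge\mathcal{Q}_t^{T-t+1}$ (valid because $\ell\ge T-t+1$, so the cuts of $\mathcal{Q}_t^{T-t+1}$ are among those of $\mathcal{Q}_t^\ell$), and the fact that $x_{[n_{t-1}^\ell]}^\ell\in\mathcal{X}_1\times\cdots\times\mathcal{X}_{t-1}$ since every coordinate is a decision produced by \eqref{defxtkj} and hence lies in the corresponding $\mathcal{X}_i$. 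Because $\mathcal{Q}_t^{T-t+1}$ is continuous (Lemma \ref{lipqtk}(c)) and $\mathcal{X}_1\times\cdots\times\mathcal{X}_{t-1}$ is compact (Assumption (H2)-1)), this gives
\[
\theta_t^\ell\ \ge\ \mathcal{Q}_t^{T-t+1}\bigl(x_{[n_{t-1}^\ell]}^\ell\bigr)\ \ge\ \min_{x_{1:t-1}\in\mathcal{X}_1\times\cdots\times\mathcal{X}_{t-1}}\mathcal{Q}_t^{T-t+1}(x_{1:t-1}),
\]
a finite quantity independent of $\ell$. Combining the two displays, noting that the supremum of $\mathcal{Q}_t$ over the compact set $[\mathcal{X}_1\times\cdots\times\mathcal{X}_{t-1}]^{\varepsilon/2}$ is finite by Proposition \ref{convexityrec}, and taking the maximum over $t=2,\dots,T$ yields $\|\beta_t^\ell\|\le L$ for all such $t,\ell$, whence the $L$-Lipschitz continuity of $\mathcal{Q}_t^k$ for $k\ge T-t+1$.

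The only real obstacle is bookkeeping: recognizing that \eqref{boundbeta} is available for \emph{every} index $\ell\ge T-t+1$ rather than only the ``current'' one (true because it is proved within the induction of Lemma \ref{lipqtk}), and correctly discarding the $-\infty$ cuts so that the Lipschitz modulus of the max is exactly $\max_\ell\|\beta_t^\ell\|$. Once these are granted, the statement is a direct assembly of Lemmas \ref{lipqtk} and \ref{eqqeqthetlemma} together with Proposition \ref{convexityrec}.
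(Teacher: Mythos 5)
Your proposal is correct and follows essentially the same route as the paper, whose proof simply cites \eqref{boundbeta} and \eqref{eqqeqthet}: you bound the Lipschitz modulus of the polyhedral function by the norms of the cut gradients, apply \eqref{boundbeta}, and lower-bound $\theta_t^\ell$ via Lemma \ref{eqqeqthetlemma}, the monotonicity $\mathcal{Q}_t^\ell \geq \mathcal{Q}_t^{T-t+1}$, and compactness. The only difference is that you spell out the bookkeeping (discarding the $-\infty$ cuts and uniformity in $\ell$) that the paper leaves implicit.
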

\begin{proof}This is an immediate consequence of \eqref{boundbeta} and \eqref{eqqeqthet}.\hfill
\end{proof}

\section{Convergence analysis for risk-averse multistage stochastic convex programs}\label{sec:relativelcr}

Theorem \ref{convalg1}  shows the convergence of the sequence $\mathfrak{Q}_{1}^k(x_0, \xi_1)$ 
to $\mathcal{Q}_{1}(x_0)$ and that 
any accumulation point of the sequence $(x_1^k)_{k \in \mathbb{N}^*}$ is an optimal solution 
of the first stage problem \eqref{firststagepb}.
\begin{thm}[Convergence analysis of Algorithm 1]\label{convalg1}
Consider the sequences of stochastic decisions $x_n^k$ and of recourse functions $\mathcal{Q}_ t^k$
generated by Algorithm 1 to solve dynamic programming equations  \eqref{definitionQt}-\eqref{defmathfrak}.
Let Assumptions (H1), (H2), and (H3) hold. Then
\begin{itemize}
\item[(i)] almost surely, for $t=2,\ldots,T+1$, the following holds:
$$
\mathcal{H}(t): \;\;\;\forall n \in {\tt{Nodes}}(t-1), \;\; \displaystyle \lim_{k \rightarrow +\infty} \mathcal{Q}_{t}(x_{[n]}^{k})-\mathcal{Q}_{t}^{k}(x_{[n]}^{k} )=0.
$$
\item[(ii)]
Almost surely, we have
$\displaystyle \lim_{k \rightarrow +\infty} \mathfrak{Q}_1^{k}(x_{0}, \xi_1)=\mathcal{Q}_{1}(x_0)$ and
any accumulation point of the sequence $(x_1^k)_{k \in \mathbb{N}^*}$ is an optimal solution of the first stage problem \eqref{firststagepb}.
\end{itemize}
\end{thm}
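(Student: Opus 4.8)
The plan is to prove $\mathcal{H}(t)$ by backward induction on $t$, from $t=T+1$ down to $t=2$, and then deduce (ii) from $\mathcal{H}(2)$. The base case $t=T+1$ is trivial since $\mathcal{Q}_{T+1}\equiv\mathcal{Q}_{T+1}^k\equiv 0$. For the induction step, assume $\mathcal{H}(t+1)$ holds almost surely and fix a node $n\in{\tt Nodes}(t-1)$. The first observation is that, by the sampling assumption (H3) and the fact that at each stage there are only finitely many realizations, almost surely every node of the scenario tree is visited infinitely often; restrict attention to the (random, a.s.\ infinite) subsequence of iterations $k$ along which $n_{t-1}^k=n$. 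Along this subsequence I would control the gap
$$
0\le \mathcal{Q}_{t}(x_{[n]}^{k})-\mathcal{Q}_{t}^{k}(x_{[n]}^{k})
= \mathcal{Q}_{t}(x_{[n]}^{k})-\theta_t^k,
$$
using Lemma \ref{eqqeqthetlemma} for the last equality. Writing both $\mathcal{Q}_t$ and $\theta_t^k$ via the dual representation \eqref{defqtcoherent}, and using that the same weights $p_{k,m}$ are (near-)optimal, this gap is bounded above by $\sum_{m\in C(n)} p_{k,m}\Phi_m\bigl(\mathfrak{Q}_t(x_{[n]}^k,\xi_m)-\mathfrak{Q}_t^{k-1}(x_{[n]}^k,\xi_m)\bigr)$, so it suffices to show each term $\mathfrak{Q}_t(x_{[n]}^k,\xi_m)-\mathfrak{Q}_t^{k-1}(x_{[n]}^k,\xi_m)\to 0$ along the subsequence.

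To do that I would compare the optimization problems defining $\mathfrak{Q}_t$ and $\mathfrak{Q}_t^{k-1}$: they differ only in that $\mathcal{Q}_{t+1}$ is replaced by $\mathcal{Q}_{t+1}^{k-1}$. Since $x_m^k$ is optimal for the approximate problem, and feasible for the exact one,
$$
0\le \mathfrak{Q}_t(x_{[n]}^k,\xi_m)-\mathfrak{Q}_t^{k-1}(x_{[n]}^k,\xi_m)
\le \mathcal{Q}_{t+1}(x_{[n]}^k,x_m^k)-\mathcal{Q}_{t+1}^{k-1}(x_{[n]}^k,x_m^k).
$$
Now I would use a standard argument: extract a further subsequence along which $x_{[n]}^k\to \bar x_{1:t-1}$ and $x_m^k\to\bar x_m$ (possible since all decisions lie in the compact sets $\mathcal{X}_\tau$ by (H2)-1)); the node $m$ is then also visited infinitely often, and along that sub-subsequence $x_{[m]}^k=(x_{[n]}^k,x_m^k)$ is the trial point at $m$. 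Using that $\mathcal{Q}_{t+1}^{k-1}\ge \mathcal{Q}_{t+1}^{K}$ for the particular iteration $K$ visiting $m$, the value $\mathcal{Q}_{t+1}^{k-1}(x_{[m]}^k)$ is at least the cut $\theta_{t+1}^{K}+\langle\beta_{t+1}^K,\cdot\rangle$ evaluated at $x_{[m]}^k$, which by continuity of this affine function and Lemma \ref{eqqeqthetlemma} tends to $\lim_K \theta_{t+1}^K = \lim_K \mathcal{Q}_{t+1}^K(x_{[m]}^K)$. Combined with the induction hypothesis $\mathcal{H}(t+1)$ applied at node $m$, i.e.\ $\mathcal{Q}_{t+1}(x_{[m]}^K)-\mathcal{Q}_{t+1}^K(x_{[m]}^K)\to 0$, and with the Lipschitz continuity of $\mathcal{Q}_{t+1}$ and of the $\mathcal{Q}_{t+1}^k$ (uniformly in $k$, Lemma \ref{lipforqtk}) to transfer the limit from the trial points to $x_{[m]}^k$, one concludes $\mathcal{Q}_{t+1}(x_{[m]}^k)-\mathcal{Q}_{t+1}^{k-1}(x_{[m]}^k)\to 0$. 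Feeding this back gives $\mathcal{H}(t)$. The main obstacle, and the place requiring the most care, is precisely this step: making rigorous, with the interstage-independent sampling and the monotone-in-$k$ family of cuts, that the gap at the children nodes drives the gap at the parent, including handling the fact that different iterations visit different nodes and one must chain several subsequence extractions while keeping the uniform Lipschitz bounds in force.

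For part (ii), I would apply $\mathcal{H}(2)$: since $n_1^k=n_1$ for all $k$ and $x_{[n_1]}^k=x_1^k$, we get $\mathcal{Q}_2(x_1^k)-\mathcal{Q}_2^{k-1}(x_1^k)\to 0$ a.s. The first-stage trial point $x_1^k$ minimizes $f_1(\cdot,\Psi_1)+\mathcal{Q}_2^{k-1}(\cdot)$ over $X_1(x_0,\xi_1)$, with optimal value $\mathfrak{Q}_1^{k-1}(x_0,\xi_1)\le\mathcal{Q}_1(x_0)$; conversely $f_1(x_1^k,\Psi_1)+\mathcal{Q}_2(x_1^k)\ge\mathcal{Q}_1(x_0)$, and the difference of these two quantities is exactly $\mathcal{Q}_2(x_1^k)-\mathcal{Q}_2^{k-1}(x_1^k)\to0$, which yields $\mathfrak{Q}_1^{k}(x_0,\xi_1)\to\mathcal{Q}_1(x_0)$. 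Finally, if $x_1^\star$ is an accumulation point of $(x_1^k)$, say $x_1^{k_i}\to x_1^\star$, then $x_1^\star\in X_1(x_0,\xi_1)$ by closedness of the feasible set, and using continuity of $f_1(\cdot,\Psi_1)$ on $\mathcal{X}_1$ together with continuity of $\mathcal{Q}_2$ (Proposition \ref{convexityrec}), $f_1(x_1^\star,\Psi_1)+\mathcal{Q}_2(x_1^\star)=\lim_i\bigl(f_1(x_1^{k_i},\Psi_1)+\mathcal{Q}_2(x_1^{k_i})\bigr)=\mathcal{Q}_1(x_0)$, so $x_1^\star$ is optimal for \eqref{firststagepb}.
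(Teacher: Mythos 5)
Your part (ii) and the overall skeleton (backward induction, Lemma \ref{eqqeqthetlemma}, the dual representation, and the bound of the parent gap by the children gaps) do follow the paper, but there is a genuine gap in the induction step for (i). The statement $\mathcal{H}(t)$ is a limit over \emph{all} iterations $k$, whereas your argument only controls the gap at node $n$ along the subsequence $\mathcal{S}_n=\{k:\,n_{t-1}^k=n\}$ of iterations whose sampled scenario visits $n$; you never treat $k\notin\mathcal{S}_n$, so "feeding this back" does not give $\mathcal{H}(t)$. This is not cosmetic: your induction assumes the full-sequence statement $\mathcal{H}(t+1)$ but delivers only a subsequence statement at stage $t$, so the induction does not close. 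And it cannot be closed with subsequence statements only, because the parent bound $0\le\mathcal{Q}_t(x_{[n]}^k)-\mathcal{Q}_t^k(x_{[n]}^k)\le\sup_{p\in\mathcal{P}_t}\sum_{m\in C(n)}p_m\Phi_m\big[\mathcal{Q}_{t+1}(x_{[m]}^k)-\mathcal{Q}_{t+1}^{k-1}(x_{[m]}^k)\big]$ needs the child gaps to vanish along $k\in\mathcal{S}_n$, while each $k\in\mathcal{S}_n$ visits exactly one child, so $\mathcal{S}_n$ is not contained in $\mathcal{S}_m$. The paper resolves this with a probabilistic upgrade that is entirely absent from your proposal: if the gap at $n$ stayed $\ge\varepsilon$ along an infinite set of iterations, the independence in (H3), Lemma A.3 of \cite{lecphilgirar12} and the Strong Law of Large Numbers force infinitely many of those iterations to lie in $\mathcal{S}_n$, contradicting the convergence already proved along $\mathcal{S}_n$; this yields the full-sequence $\mathcal{H}(t)$ needed to continue the induction.

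Your attempted deterministic bridge, extracting a convergent sub-subsequence of $(x_{[m]}^k)_{k\in\mathcal{S}_n}$ and lower-bounding $\mathcal{Q}_{t+1}^{k-1}(x_{[m]}^k)$ by a cut $(\theta_{t+1}^{K},\beta_{t+1}^{K})$ generated at an iteration $K\in\mathcal{S}_m$, does not repair this: that cut is tight (up to the induction error) only near the point $x_{[m]}^{K}$ where it was built, and the accumulation points of $(x_{[m]}^k)_{k\in\mathcal{S}_n}$ need not coincide with accumulation points of $(x_{[m]}^{K})_{K\in\mathcal{S}_m}$, since the trial points depend on the entire sampling history; nothing deterministic ties the two subsequences together. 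Moreover, if you keep the full-sequence hypothesis $\mathcal{H}(t+1)$ this detour is unnecessary: the child gaps then vanish along any subsequence, and the only remaining issue is the index shift from $\mathcal{Q}_{t+1}^{k}$ to $\mathcal{Q}_{t+1}^{k-1}$, which the paper handles with Lemma A.1 of \cite{lecphilgirar12} (monotonicity of the cut functions plus the uniform Lipschitz bound of Lemma \ref{lipforqtk}) --- the argument you gesture at. The same index-shift point arises in your part (ii), where $\mathcal{H}(2)$ gives the gap with $\mathcal{Q}_2^{k}$ but the first-stage subproblem at iteration $k$ uses $\mathcal{Q}_2^{k-1}$; apart from that, (ii) is correct and essentially the paper's argument.
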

\begin{proof}
In this proof, all equalities and inequalities hold almost surely.
We show $\mathcal{H}(2),\ldots,\mathcal{H}(T+1)$, by induction backwards in time.
$\mathcal{H}(T+1)$ follows from the fact that $\mathcal{Q}_{T+1}=\mathcal{Q}_{T+1}^{k}=0$.
Now assume that $\mathcal{H}(t+1)$ holds for some $t \in \{2,\ldots,T\}$. We want to show that $\mathcal{H}(t)$ holds.
Take a node $n \in {\tt{Nodes}}(t-1)$. Let
$\mathcal{S}_n= \{k \geq 1 :  n_{t-1}^k =n\}$ be the set of iterations 
such that the sampled scenario passes through node $n$. 
Due to Assumption (H3), the set $\mathcal{S}_n$ is infinite.
We first show that
\begin{equation}\label{passesthroughn}
\displaystyle \lim_{k \rightarrow +\infty,\, k \in \mathcal{S}_n} \mathcal{Q}_{t}(x_{[n]}^{k})-\mathcal{Q}_{t}^{k}(x_{[n]}^{k} )=0.
\end{equation}
Take $k \in \mathcal{S}_n$. We have $n_{t-1}^k = n$ and using Lemma \ref{eqqeqthetlemma} and the definition of $\mathcal{Q}_t$, we get 
$$
0 \leq \mathcal{Q}_t(x_{[n]}^k)-\mathcal{Q}_t^k(x_{[n]}^k) = \mathcal{Q}_t(x_{[n]}^k)-\mathcal{Q}_t^k(x_{[n_{t-1}^k]}^k) =   \displaystyle \sup_{p \in \mathcal{P}_t} \;\displaystyle \sum_{m \in C(n)} p_m \Phi_{m} \mathfrak{Q}_t(x_{[n]}^k, \xi_{m})- \theta_t^k.  
$$
It follows that
\begin{equation}\label{eqfin1conv}
\begin{array}{l}
\mathcal{Q}_t(x_{[n]}^k)-\mathcal{Q}_t^k(x_{[n]}^k)\\
=\displaystyle \sup_{p \in \mathcal{P}_t} \;\displaystyle \sum_{m \in C(n)} p_m \Phi_{m} \mathfrak{Q}_t(x_{[n]}^k, \xi_{m})-
\displaystyle \sup_{p \in \mathcal{P}_t} \;\displaystyle \sum_{m \in C(n)} p_m \Phi_{m} \mathfrak{Q}_t^{k-1}(x_{[n]}^{k}, \xi_{m})  \mbox{ by definition of }\theta_t^k,\\
\leq \displaystyle \sup_{p \in \mathcal{P}_t} \;\displaystyle \sum_{m \in C(n)} p_m \Phi_{m}
\Big[ \mathfrak{Q}_t(x_{[n]}^{k}, \xi_{m}) - \mathfrak{Q}_t^{k-1}(x_{[n]}^{k}, \xi_{m})  \Big]\\
= \displaystyle \sup_{p \in \mathcal{P}_t} \;\displaystyle \sum_{m \in C(n)} p_m \Phi_{m}
\Big[ \mathfrak{Q}_t(x_{[n]}^{k}, \xi_{m})-  f_t(x_{[m]}^{k}, \Psi_m ) - \mathcal{Q}_{t+1}^{k-1}( x_{[m]}^k   ) \Big] \mbox{ by definition of }x_m^k,\\
= \displaystyle \sup_{p \in \mathcal{P}_t} \;\displaystyle \sum_{m \in C(n)} p_m \Phi_{m} \Big[ \mathfrak{Q}_t(x_{[n]}^{k}, \xi_{m}) - F_t(x_{[m]}^{k}, \Psi_m ) + \mathcal{Q}_{t+1}( x_{[m]}^k   ) - \mathcal{Q}_{t+1}^{k-1}( x_{[m]}^k   ) \Big]\\
\end{array}
\end{equation}
using the definition of $F_t$.
Observing that for every  $m \in C(n)$ and $k \in \mathcal{S}_n$ the decision $x_m^k \in X_t(x_0, x_{[n]}^k, \xi_m )$, we obtain, using definition \eqref{defmathfrak} of $\mathfrak{Q}_t$, that
$$
F_t(x_{[n]}^k, x_m^k, \Psi_m  ) = F_t(x_{[m]}^k, \Psi_m  ) \geq \mathfrak{Q}_{t}(x_{[n]}^k, \xi_{m}).
$$
Combining this relation with \eqref{eqfin1conv} gives for $k \in \mathcal{S}_n$
\begin{equation} \label{kinKn}
\begin{array}{l}
0 \leq \mathcal{Q}_t(x_{[n]}^k)-\mathcal{Q}_t^k(x_{[n]}^k) \leq  \displaystyle \sup_{p \in \mathcal{P}_t} \;\displaystyle \sum_{m \in C(n)} p_m \Phi_{m} \Big[\mathcal{Q}_{t+1}( x_{[m]}^k   ) - \mathcal{Q}_{t+1}^{k-1}( x_{[m]}^k   ) \Big].
\end{array}
\end{equation}                      
Using the induction hypothesis $\mathcal{H}(t+1)$, we have for every child node $m$ of node $n$ that
\begin{equation}\label{inductionm}
\displaystyle \lim_{k \rightarrow +\infty} \mathcal{Q}_{t+1}(x_{[m]}^{k})-\mathcal{Q}_{t+1}^{k}(x_{[m]}^{k} )=0.
\end{equation}
Now recall that $\mathcal{Q}_{t+1}$ is convex on the compact set $\mathcal{X}_1 \small{\times} \ldots \small{\times} \mathcal{X}_t$ (Proposition \ref{convexityrec}),
$x_{[m]}^k \in  \mathcal{X}_1 \small{\times} \ldots \small{\times} \mathcal{X}_t$ for every child node $m$ of node $n$,
and the functions $\mathcal{Q}_{t+1}^{k}, k\geq T-t+1$, are $L$-Lipschitz (Lemma \ref{lipforqtk})
with $\mathcal{Q}_{t+1} \geq \mathcal{Q}_{t+1}^{k} \geq \mathcal{Q}_{t+1}^{k-1}$ on $\mathcal{X}_1 \small{\times} \ldots \small{\times} \mathcal{X}_t$
(Lemma \ref{lipqtk}).
It follows that we can use Lemma A.1 in \cite{lecphilgirar12} to deduce from \eqref{inductionm} that
$$
\displaystyle \lim_{k \rightarrow +\infty   } \mathcal{Q}_{t+1}(x_{[m]}^{k})-\mathcal{Q}_{t+1}^{k-1}(x_{[m]}^{k} )=0.
$$
Combining this relation with \eqref{kinKn}, we obtain 
\begin{equation}\label{convfirst}
\lim_{k \rightarrow +\infty, k \in \mathcal{S}_n    } \mathcal{Q}_{t}(x_{[n]}^{k})-\mathcal{Q}_{t}^{k}(x_{[n]}^{k} )=0.
\end{equation}
To show $\mathcal{H}(t)$, it remains to show that 
\begin{equation}\label{inductionm3}
\displaystyle \lim_{k \rightarrow +\infty, k \notin \mathcal{S}_n    } \mathcal{Q}_{t}(x_{[n]}^{k})-\mathcal{Q}_{t}^{k}(x_{[n]}^{k} )=0.
\end{equation}
To show \eqref{inductionm3}, we proceed similarly to the end of the proof of Theorem 3.1 in \cite{lecphilgirar12}, by contradiction and using the Strong Law of Large Numbers.
For the sake of completeness, we apply here these arguments in our context, where the notation and 
the convergence statement $\mathcal{H}(t)$ is different from \cite{lecphilgirar12}.
If \eqref{inductionm3} does not hold, there exists $\varepsilon>0$ such that there is an infinite number of iterations $k \in  \mathbb{N}$ satisfying
$\mathcal{Q}_{t}(x_{[n]}^{k})-\mathcal{Q}_{t}^{k}(x_{[n]}^{k} ) \geq \varepsilon$. Since $\mathcal{Q}_{t}^{k} \geq \mathcal{Q}_{t}^{k-1}$, there is also an infinite number
of iterations belonging to the set
$$
\mathcal{K}_{n, \varepsilon}= \{k \in \mathbb{N} :  \mathcal{Q}_{t}(x_{[n]}^{k})-\mathcal{Q}_{t}^{k-1}(x_{[n]}^{k} ) \geq \varepsilon \}.
$$
Consider the stochastic processes $(w_n^k)_{k \in \mathbb{N}^*}$ and
$(y_n^k)_{k \in \mathbb{N}^*}$ 
where $w_n^k = 1_{k \in  \mathcal{K}_{n, \varepsilon}}$ and $y_n^k = 1_{k \in  \mathcal{S}_{n}}$, i.e., 
$y_n^k$ takes the value $1$ if
node $n$ belongs to the sampled scenario for iteration $k$ (when $n_{t-1}^k = n$) and $0$ otherwise.
Assumption (H3) implies that random variables $(y_n^k)_{k \in \mathbb{N}^*}$ are independent
and setting ${\tilde{\mathcal{F}}}_k=\sigma(w_n^1, \ldots,w_n^{k}, y_n^1, \ldots, y_n^{k-1})$,
by definition of $x_{[n]}^j$ and $\mathcal{Q}_{t}^j$ that
$y_n^k$ is independent on $( (x_{[n]}^j,j=1,\ldots,k), (\mathcal{Q}_{t}^j,j=1,\ldots,k-1))$ and thus 
of ${\tilde{\mathcal{F}}}_k$. If $z^j$ is the $j$th element in the set
$\{y_n^k : k \in \mathcal{K}_{n, \varepsilon}\}$, using Lemma A.3 in \cite{lecphilgirar12}, we obtain that 
random variables $z^j$ are i.i.d. and have the distribution of $y_n^1$.
Using the Strong Law of Large Numbers, we get
$$
\frac{1}{N} \sum_{j=1}^N z^j \xrightarrow{N \rightarrow +\infty} \mathbb{E}[z^1] = \mathbb{E}[y_n^1]=\mathbb{P}(y_n^1 >0)
\stackrel{(H3)}{>}0.
$$
Relation \eqref{convfirst} and Lemma A.1 in \cite{lecphilgirar12} imply that
$\lim_{k \rightarrow +\infty, k \in \mathcal{S}_n    } \mathcal{Q}_{t}(x_{[n]}^{k})-\mathcal{Q}_{t}^{k-1}(x_{[n]}^{k} )=0$.
It follows that the set
$\mathcal{K}_{n, \varepsilon} \cap \mathcal{S}_n = \mathcal{K}_{n, \varepsilon} \cap \{k \in \mathbb{N}^* : y_n^k =1\}$ is finite.
This implies 
$$
\frac{1}{N} \sum_{j=1}^N z^j \xrightarrow{N \rightarrow +\infty} 0,
$$
which yields the desired contradiction and achieves the proof of (i).
\par (ii) 
By definition of $\mathfrak{Q}_1^{k-1}$ (see Algorithm 1), we have
\begin{equation}\label{solfirststapp}
\begin{array}{lll}
\mathfrak{Q}_1^{k-1}(x_{0}, \xi_1)&= &   
f_1(x_{[n_1]}^k, \Psi_1) + \mathcal{Q}_2^{k-1}(x_{[n_1]}^k)
=F_1(x_{[n_1]}^k, \Psi_1)-\mathcal{Q}_2(x_{[n_1]}^k)+\mathcal{Q}_2^{k-1}(x_{[n_1]}^k).
\end{array}
\end{equation}
Since $x_{[n_1]}^k \in X_1(x_0,\xi_1)$ we have $F_1(x_{[n_1]}^k, \Psi_1) \geq \mathfrak{Q}_1(x_0, \xi_1)$.
Together with \eqref{solfirststapp}, this implies 
\begin{equation}\label{finaliii}
0 \leq \mathfrak{Q}_1(x_{0}, \xi_1)-\mathfrak{Q}_1^{k-1}(x_{0}, \xi_1)\leq \mathcal{Q}_2(x_{[n_1]}^k)-\mathcal{Q}_2^{k-1}(x_{[n_1]}^k).
\end{equation}
Using $\mathcal{H}(2)$ from item (i) and Lemma A.1 in \cite{lecphilgirar12}, this implies
\begin{equation}\label{convq2k}
\displaystyle \lim_{k \rightarrow +\infty  } \mathcal{Q}_{2}(x_{[n_1]}^{k})-\mathcal{Q}_{2}^{k-1}(x_{[n_1]}^{k} )=0.
\end{equation}
Plugging this relation into \eqref{finaliii}, we get
$\displaystyle \lim_{k \rightarrow +\infty } \mathfrak{Q}_1^{k-1}(x_{0}, \xi_1) = \mathfrak{Q}_1(x_{0}, \xi_1)$.

Recalling that $n_1$ is the node associated to the first stage,
consider now an accumulation point $x_{n_1}^*$ of the sequence $(x_{n_1}^k)_{k \in \mathbb{N}}$.
There exists a set $K$ such that the sequence $(x_{n_1}^k)_{k \in K}$ converges to $x_{n_1}^*$.
By definition
of $x_{n_1}^k$ and since $\Psi_{n_1} = \Psi_1$, we get
\begin{equation}\label{convfirststage1}
\begin{array}{lll}
 f_1(x_{n_1}^k, \Psi_1) + \mathcal{Q}_2^{k-1}(x_{n_1}^k)=\mathfrak{Q}_1^{k-1}(x_0, \xi_1).
\end{array} 
\end{equation}
Using \eqref{convq2k} and the continuity of $\mathcal{Q}_2$ on $\mathcal{X}_1$, we have
$$
\lim_{k \rightarrow +\infty,\, k \in K}\,\mathcal{Q}_2^{k-1}(x_{n_1}^k)=\mathcal{Q}_2( x_{n_1}^*).
$$
Taking the limit in \eqref{convfirststage1} when $k \rightarrow +\infty$ with $k \in K$ and using the lower semicontinuity of $f_1$, we obtain
$$
f_1(x_{n_1}^{*}, \Psi_1) + \mathcal{Q}_2( x_{n_1}^* )=F_{1}(x_{n_1}^*, \Psi_1) \leq \mathcal{Q}_1(x_0).
$$
Since for every $k \in K$,  $x_{n_1}^k$ is feasible for the first stage problem, so is $x_{n_1}^*$ (due to the lower semicontinuity of $g_1(x_0,\cdot,\Psi_1)$
and the compactness of $\mathcal{X}_1$, the set $X_1(x_0, \xi_1)$ is closed) and 
$x_{n_1}^*$ is an optimal solution to the first stage problem.\hfill
\end{proof}

\begin{rem}\label{importantremark}
In Algorithm 1, decisions are computed at every iteration for all the nodes of the scenario tree.
However, in practice, decisions will only be computed for the nodes of the sampled scenarios and their children nodes
(such is the case of SDDP).
This variant of Algorithm 1 will build the same cuts and compute the same decisions for the nodes of the
sampled scenarios as Algorithm 1. For this variant, for a node $n$, the decision variables $(x_n^k)_k$ are defined for
an infinite subset ${\tilde{\mathcal{S}}}_{n}$ of iterations where the sampled scenario passes through the parent node of node $n$, i.e., 
${\tilde{\mathcal{S}}}_{n}=\mathcal{S}_{\mathcal{P}(n)}$.
With this notation, for this variant of Algorithm 1, applying Theorem \ref{convalg1}-(i), we get for $t=2,\ldots,T+1$,
\begin{equation}\label{variantalg1}
\mbox{for all }n \in {\tt{Nodes}}(t-1), \lim_{k \rightarrow +\infty, k \in {\tilde{\mathcal{S}}}_{n}} \mathcal{Q}_{t}(x_{[n]}^{k})-\mathcal{Q}_{t}^{k}(x_{[n]}^{k} )=0
\end{equation}
almost surely, while Theorem \ref{convalg1}-(ii) still holds.
\end{rem}

\begin{rem}\label{remassumptions}
If for a given stage $t$, $\mathcal{X}_t$ is a polytope and we do not have the nonlinear constraints
given by constraint functions $g_t$ (i.e., the constraints for this stage are linear), then the conclusions of 
Proposition \ref{convexityrec} and Lemmas \ref{lipqtk}, \ref{eqqeqthetlemma}, and \ref{lipforqtk} hold
and thus Theorem \ref{convalg1} holds too under weaker assumptions. 
More precisely, for such stages $t$, we assume (H2)-1),
(H2)-2), and instead of (H2)-4), (H2)-5), the weaker assumption (H2)-3'):\\
\par (H2)-3') There exists $\varepsilon>0$ such that:
\begin{itemize}
\item[3.1')] for every $j=1,\ldots,M$,
$
\Big[\mathcal{X}_1 {\small{\times}} \ldots {\small{\times}} \mathcal{X}_{t-1} \Big]^{\varepsilon}{\small{\times}} \mathcal{X}_{t}  \subset \mbox{dom} \; f_t\Big(\cdot, \Psi_{t, j}\Big);
$
\item[3.2')] for every $j=1,\ldots,M$, for every
$x_{1:t-1} \in \mathcal{X}_1 {\small{\times}} \ldots {\small{\times}} \mathcal{X}_{t-1}$,
the set $X_t(x_{0:t-1}, \xi_{t, j})$ is nonempty.
\end{itemize}
\end{rem}

\section{Convergence analysis for risk-averse multistage stochastic linear programs without relatively complete recourse} \label{sec:withoutrelativelcr}

In this section, we consider the case when $g_t$ is affine and $f_t$ is linear.
We replace assumption (H2)-1) by $\mathcal{X}_t=\mathbb{R}_{+}^{n}$ and we do not make
Assumptions (H2)-4)-5). More precisely, instead of \eqref{defmathfrak}, we consider the following 
dynamic programming equations corresponding to multistage stochastic linear programs
that do not satisfy the relatively complete recourse assumption:
we set $\mathcal{Q}_{T+1} \equiv 0$ and for
$t=2,\ldots,T$, we define $\mathcal{Q}_{t}(x_{1:t-1})=\rho_t\Big(\mathfrak{Q}_{t}(x_{1:t-1}, \xi_t)\Big)$
now with
\begin{equation} \label{defmathfrak1}
\begin{array}{lll}
\mathfrak{Q}_{t}(x_{1:t-1}, \xi_t)&=&
\left\{ 
\begin{array}{l}
\displaystyle \inf_{x_t}\;F_t(x_{1:t}, \Psi_t ):= \Psi_{t}^{\transp} x_{1:t}  + \mathcal{Q}_{t+1}(x_{1:t})\\
\displaystyle \sum_{\tau=0}^{t} \; A_{t, \tau} x_{\tau} = b_t,\;x_t \geq 0,
\end{array}
\right.
\\
&=&\left\{
\begin{array}{l}
\displaystyle \inf_{x_t} \;F_t(x_{1:t}, \Psi_t)\\
x_t \in X_t (x_{0:t-1}, \xi_t ).
\end{array}
\right.
\end{array}
\end{equation}
At the first stage, we solve
\begin{equation} \label{defmathfrak2}
\begin{array}{lll}
\left\{ 
\begin{array}{l}
\displaystyle \inf_{x_1}\;F_1(x_{1}, \Psi_1) := \Psi_{1}^{\transp} x_{1}  + \mathcal{Q}_{2}(x_{1})\\
x_1 \in X_1 (x_{0}, \xi_1 )
=\{x_1 \geq 0, A_{1,1} x_1 = b_1 - A_{1,0} x_0\},
\end{array}
\right.
\end{array}
\end{equation}
with optimal value denoted by $\mathcal{Q}_1(x_0)=\mathfrak{Q}_1(x_0, \xi_1)$.
If we apply Algorithm 1 to solve \eqref{defmathfrak2} (in the sense of Theorem \ref{convalg1}), since Assumption (H2)-4) does not hold, it is possible that 
one of the problems \eqref{defxtkj} to be solved in the forward passes is infeasible. 
In this case, $x_{[n]}^k$ is not a feasible sequence of states from stage $1$ to stage $t-1$
and we build a separating hyperplane separating $x_{n}^k$ and the set of states that are feasible
at stage $t-1$ (those for which there exist sequences of decisions on any future scenario, assuming 
that problem \eqref{defmathfrak2}  is feasible).
The construction of
feasibility cuts for the nested decomposition algorithm is described in \cite{birge-louv-book}.
Feasibility cuts for sampling based decomposition algorithms were introduced 
in \cite{guiguescoap2013}. This latter reference also discusses how to share feasibility cuts
among nodes of the same stage for some interstage independent processes and stochastic programs.
In the case of problem \eqref{defmathfrak1}, before solving problems \eqref{defxtkj} for all $m \in C(n)$ in the forward pass,
setting $n=n_{t-1}^k$, we solve for every $m \in C(n)$ the optimization problem
\begin{equation} \label{phaseI}
\begin{array}{l}
\displaystyle \min_{x_t, y_1, y_2} \;e^\transp ( y_1 + y_2 )\\
\displaystyle A_{t, m} x_{t} + y_1 - y_2  = b_{m}- \sum_{\tau=0}^{t-1} \; A_{\tau, m} x_{\mathcal{P}^{t-\tau} (m)}^k, \hspace*{0.85cm}[\pi]\\
(x_{[n]}^k)^\transp {\tilde \beta}_{t+1, 1}^{\ell} + x_t^\transp {\tilde \beta}_{t+1, 2}^{\ell} \leq {\tilde \theta}_{t+1}^{\ell},\;\ell=1,\ldots,K_t,\;\hspace*{0.38cm}[\tilde \pi]\\
x_t, y_1, y_2 \geq 0,
\end{array}
\end{equation}
where $e$ is a vector of ones.
In the above problem, we have denoted by respectively $\pi$ and $\tilde \pi$ optimal Lagrange multipliers
for the first and second set of constraints\footnote{We suppressed the dependency with respect to $n, k$ to alleviate notation.}.
If ${\tilde{\mathfrak{Q}}}_t(x_{[n]}^k, \xi_{m})$
is the optimal value of \eqref{phaseI}, noting that 
${\tilde{\mathfrak{Q}}}_t(\cdot, \xi_{m})$ is convex with
$s=[A_{1, m}^\transp; \ldots;A_{t-1, m}^\transp] \pi + 
[{\tilde \beta}_{t+1, 1}^{1}, \ldots,{\tilde \beta}_{t+1, 1}^{K_t}]{\tilde \pi}$ belonging to the subdifferential of  
${\tilde{\mathfrak{Q}}}_t(\cdot, \xi_{m})$ at $x_{[n]}^k$, if 
$x_{1:t-1}$ is feasible then
\begin{equation} \label{feascuts}
{\tilde{\mathfrak{Q}}}_t(x_{1:t-1}, \xi_{m})=0 \geq  {\tilde{\mathfrak{Q}}}_t(x_{[n]}^{k}, \xi_{m}) + s^\transp (x_{1:t-1}-x_{[n]}^k).
\end{equation}
Inequality \eqref{feascuts} defines a feasibility cut for $x_{1:t-1}$ of the form
\begin{equation} \label{fcuts}
x_{1:t-1}^\transp {\tilde \beta}_{t}^{\ell} \leq {\tilde \theta}_{t}^{\ell}
\end{equation}
with 
\begin{equation} \label{fcutsform}
{\tilde \beta}_{t}^{\ell} = [({\tilde \beta}_{t, 1}^{\ell})^\transp ; ({\tilde \beta}_{t, 2}^{\ell})^\transp   ] = s  \mbox{ and }{\tilde 
\theta}_{t}^{\ell}=-{\tilde{\mathfrak{Q}}}_t(x_{[n]}^{k}, \xi_{m}) + s^\transp x_{[n]}^k,
\end{equation}
where ${\tilde \beta}_{t, 1}^{\ell}$ (resp. ${\tilde \beta}_{t, 2}^{\ell}$) is the vector containing the first $n(t-2)$ (resp. last $n$) components of ${\tilde \beta}_{t}^{\ell}$.
Incorporating these cuts in the forward pass
of Algorithm 1, we obtain Algorithm 2.\\
\rule{\linewidth}{1pt}
\par {\textbf{Algorithm 2: Multistage stochastic decomposition algorithm to solve \eqref{defmathfrak1} without the relatively complete recourse assumption.}}\\
\par {\textbf{Initialization.}} Set $k=1$ (iteration count), Out=0 (Out will be 1 if the problem is infeasible),
$K_t=0$ (number of feasibility cuts at stage $t$), $\mathcal{Q}_{t}^0 \equiv-\infty$ for $t=2,\ldots,T$, and
$\mathcal{Q}_{T+1}^0 \equiv 0$, i.e., set $\theta_{T+1}^0=0$, $\beta_{t+1}^0=0$ for $t=1,\ldots,T$, 
and $\theta_{t+1}^0$  to $-\infty$ for $t=1,\ldots,T-1$.\\
\par {\textbf{While}} Out=0\\
\hspace*{1.15cm}Sample a set of $T+1$ nodes $(n_0^k, n_1^k, \ldots, n_T^k)$ such that
$n_0^k$ is the root node, $n_1^k = n_1$ is the node\\
\hspace*{1.15cm}corresponding to the first stage, and for every
$t=1,\ldots,T$, node $n_t^k$ is a child node of node $n_{t-1}^k$.\\
\hspace*{1.15cm}This set of nodes is associated to a sample $({\tilde \xi}_1^k, {\tilde \xi}_2^k,\ldots, {\tilde \xi}_T^k)$
of $(\xi_1, \xi_2,\ldots, \xi_T)$, realization of random
\hspace*{1.15cm}variables $(\xi_1^k, \ldots,\xi_T^k)$. \\
\hspace*{1.15cm}Set $t=1$, $n=n_{t-1}^k = n_0$, and $x_n^k=x_0$.\\
\hspace*{1.15cm}{\textbf{While}} ($t<=T$) and (Out=0)\\
\hspace*{1.6cm}Set OutAux$=0$.\\
\hspace*{1.6cm}{\textbf{While}} there remains a nonvisited child of $n$ and (OutAux$=0$),\\
\hspace*{2cm}Take for $m$ a nonvisited child of $n$.\\
\hspace*{2cm}Solve problem \eqref{phaseI}.\\
\hspace*{2cm}{\textbf{If}} the optimal value of \eqref{phaseI} is positive then\\
\hspace*{2.5cm}{\textbf{If}} $t=1$ then\\
\hspace*{2.8cm}Out=1, OutAux$=1$ //the problem is infeasible\\
\hspace*{2.5cm}{\textbf{Else}}\\
\hspace*{2.8cm}Compute \eqref{fcutsform} to build feasibility cut \eqref{fcuts}.\\
\hspace*{2.8cm}Increase $K_{t-1}$ by one.\\
\hspace*{2.8cm}Set $n=n_{t-2}^k$ and OutAux$=1$.\\
\hspace*{2.8cm}Decrease $t$ by one.\\
\hspace*{2.5cm}{\textbf{End if}}\\
\hspace*{2cm}{\textbf{End If}}\\
\hspace*{1.6cm}{\textbf{End While}}\\
\hspace*{1.6cm}{\textbf{If}} OutAux$=0$,\\
\hspace*{2cm}Setting $m=n_t^k$, compute  an optimal solution $x_m^k$ of
\begin{equation}\label{forward1}
\left\{
\begin{array}{l}
\displaystyle \inf_{x_t} \;([x_{[n]}^k; x_t])^\transp \Psi_m + z\\
\displaystyle  \sum_{\tau=0}^{t-1} \; A_{\tau, m} x_{\mathcal{P}^{t-\tau} (m)}^k + A_{t, m} x_t = b_m,\;\;\;\;\;\;\hspace*{0.35cm}[\pi_{k, m, 1}]\\
z \geq \theta_{t+1}^{\ell} + \langle \beta_{t+1, 1}^{\ell}, x_{[n]}^{k}-x_{[n_{t-1}^{\ell}]}^{\ell} \rangle \\
\;\;\;\;\;\;+ \langle \beta_{t+1, 2}^{\ell}, x_{t}-x_{n_t^{\ell}}^{\ell} \rangle,\;\;0 \leq \ell \leq k-1,\;\;[\pi_{k, m, 2}]\\
([x_{[n]}^k; x_t])^\transp {\tilde \beta}_{t+1}^{\ell} \leq {\tilde \theta}_{t+1}^{\ell},\;\ell=1,\ldots,K_t,\;\hspace*{0.23cm}[\pi_{k, m, 3}]\\
x_t \geq 0.
\end{array}
\right.
\end{equation}
\hspace*{2cm}In the above problem, we have
denoted by $\pi_{k, m, 1}, \pi_{k, m, 2}$,  and $\pi_{k, m, 3}$, 
the optimal Lagrange
\hspace*{2cm}multipliers associated with respectively the first, second, and third group of constraints.\\
\hspace*{2cm}Increase $t$ by one and set $n=n_{t-1}^k$.\\
\hspace*{1.6cm}{\textbf{End If}}\\
\hspace*{1.15cm}{\textbf{End While}}\\
\hspace*{1.15cm}{\textbf{If }}Out=0\\
\hspace*{1.6cm}{\textbf{For }} $t=2,\ldots,T$,\\
\hspace*{1.8cm}{\textbf{For }}each child node $m$ of $n=n_{t-1}^k$ with $m \neq n_t^k$, \\
\hspace*{2.55cm}compute an optimal solution $x_m^k$ and the optimal value $\mathfrak{Q}_t^{k-1}(x_{[n]}^{k}, \xi_{m})$ \\
\hspace*{2.55cm}of optimization problem \eqref{forward1}.\\
\hspace*{1.8cm}{\textbf{End For}}\\
\hspace*{1.8cm}For every child node $m$ of $n$, compute 
$$
\begin{array}{lll}
\pi_{k, m}& = & \Psi_{m}(1:(n-1)t) + \left(\begin{array}{c}A_{1, m}^\transp\\\vdots\\A_{t-1, m}^\transp \end{array}   \right) \pi_{k, m, 1}\\
&&+[\beta_{t+1, 1}^{0}, \ldots \beta_{t+1, 1}^{k-1}]\pi_{k, m, 2} +[{\tilde \beta}_{t+1, 1}^{1}, \ldots {\tilde \beta}_{t+1, 1}^{K_t}]\pi_{k, m, 3}.
\end{array}
$$
\hspace*{1.8cm}Compute $(p_{k, m})_{m \in C(n)}$ such that 
$$
\begin{array}{lll}
\rho_t \left( \mathfrak{Q}_{t}^{k-1}(x_{[n]}^{k}, \xi_t )  \right)&=& \displaystyle \sup_{p \in \mathcal{P}_t} \;
\displaystyle \sum_{m \in C(n)} p_m \Phi_{m} \mathfrak{Q}_{t}^{k-1}(x_{[n]}^k, \xi_{m})\\
&=&\displaystyle \sum_{m \in C(n)} p_{k, m} \Phi_{m} \mathfrak{Q}_{t}^{k-1}(x_{[n]}^{k}, \xi_{m})
\end{array}
$$
\hspace*{2cm}and coefficients
\begin{equation}\label{formulathetak2}
\begin{array}{l}
\theta_t^k  = \rho_t \left( \mathfrak{Q}_{t}^{k-1}(x_{[n]}^{k}, \xi_t) \right)=  \displaystyle \sum_{m \in C(n)} p_{k, m} \Phi_{m} \mathfrak{Q}_t^{k-1}(x_{[n]}^k, \xi_{m}),\;\;\;\beta_t^k  = \displaystyle \sum_{m \in C(n)} p_{k, m} \Phi_{m}  \pi_{k, m},
\end{array}
\end{equation}
\hspace*{2cm}making up the new approximate recourse function
$$
\mathcal{Q}_{t}^{k}(x_{1:t-1})  = \displaystyle \max_{0 \leq \ell \leq k}\;\Big( \theta_t^{\ell} + \langle \beta_t^{\ell}, x_{1:t-1}-x_{[n_{t-1}^{\ell}]}^{\ell} \rangle \Big).
$$
\hspace*{1.6cm}{\textbf{End For}}\\
\hspace*{1.6cm}Compute $\theta_{T+1}^{k}=0$ and $\beta_{T+1}^{k}=0$.\\
\hspace*{1.15cm}{\textbf{End If}}\\
\hspace*{1.15cm}Increase $k$ by one.\\
{\textbf{End While}}.\\
\rule{\linewidth}{1pt}
Theorem \ref{convalg2} which is a convergence analysis of
Algorithm 2 is a corollary of the convergence analysis of Algorithm 1 from Theorem \ref{convalg1}:
\begin{thm}[Convergence analysis of Algorithm 2] \label{convalg2} 
Let Assumptions (H1) and (H3) hold and assume that
\begin{itemize}
\item[(H2')] for every $t=1,\ldots,T$, for every realization $({\tilde \xi}_1, {\tilde \xi}_2,\ldots,{\tilde \xi}_t)$
of $(\xi_1, \xi_2,\ldots,\xi_t)$, for every sequence of feasible decisions $x_{0:t-1}$ on that scenario,
i.e., satisfying $x_\tau \in X_{\tau}(x_{0:\tau-1}, {\tilde \xi}_{\tau})$ for $\tau=1,\ldots,t-1$, 
the set $X_t(x_{0:t-1}, {\tilde \xi}_t)$
is bounded and nonempty. 
\end{itemize}
Then either Algorithm 2 terminates reporting that the problem is infeasible or
for $t=2,\ldots,T$, \eqref{variantalg1} holds almost surely and Theorem \ref{convalg1}-(ii) holds.
\end{thm}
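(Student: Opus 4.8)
The plan is to obtain Theorem \ref{convalg2} as a corollary of Theorem \ref{convalg1} by showing that after finitely many iterations Algorithm 2 either has already declared infeasibility or it coincides with Algorithm 1 run on a linear problem to which the argument of Theorem \ref{convalg1} applies.

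\emph{Finitely many feasibility cuts.} First I would show that only finitely many distinct feasibility cuts are ever generated and that, almost surely, there is a (random) iteration $\bar k$ after which no new feasibility cut is added. The key observation is that, for a fixed scenario node $m$ and a fixed collection of stage-$(t+1)$ feasibility cuts, the optimal value $\tilde{\mathfrak{Q}}_t(\cdot,\xi_m)$ of the phase-I problem \eqref{phaseI} is a convex piecewise-linear function of $x_{1:t-1}$ with finitely many affine pieces (the dual feasible set of \eqref{phaseI} does not depend on $x_{1:t-1}$, only on $A_{t,m}$ and on the current stage-$(t+1)$ cuts, and has finitely many vertices), and the cut \eqref{fcuts}--\eqref{fcutsform} produced at $x_{[n]}^k$ is precisely one of these affine pieces set to $\leq 0$; in particular its coefficients $({\tilde \beta}_t^\ell, {\tilde \theta}_t^\ell)$ do not depend on $x_{[n]}^k$. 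Arguing by backward induction on $t$ (the base case $t=T$ being immediate since $K_T\equiv 0$, so the data of \eqref{phaseI} is already frozen there), and using that the collection of active feasibility cuts is nondecreasing, only finitely many distinct stage-$t$ cuts can be added once the stage-$(t+1)$ cuts have stabilized; summing over $t$ gives the claim. Correctness of the mechanism --- that the stabilized collection describes, within $\mathcal{X}_1\times\dots\times\mathcal{X}_{t-1}$, the set of stage-$(t-1)$ states admitting feasible continuations on every scenario --- is the classical property of feasibility cuts for (sampling-based) nested decomposition, \cite{birge-louv-book}, \cite{guiguescoap2013}.

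\emph{Dichotomy and reduction.} By the subgradient inequality \eqref{feascuts} every feasibility cut is valid, so no state admitting feasible continuations is ever cut off; hence if \eqref{defmathfrak2} is infeasible the frozen stage-$2$ cuts certify this and the phase-I problem at $t=1$ eventually returns a positive value, so Algorithm 2 sets $\mathrm{Out}=1$ and terminates, whereas if \eqref{defmathfrak2} is feasible the phase-I problem at $t=1$ always returns $0$ and, for $k\geq\bar k$, so do all phase-I problems, so every forward pass completes without backtracking. In the latter case, for $k\geq\bar k$ Algorithm 2 is exactly Algorithm 1 applied to the linear problem obtained from \eqref{defmathfrak1} by appending the (finitely many, frozen) affine feasibility cuts to the constraints $g_t$ and by replacing $\mathcal{X}_t=\mathbb{R}_+^n$ with the compact convex polyhedron $\mathcal{X}_t^\sharp:=\mathbb{R}_+^n\cap B_t$, $B_t$ being a box chosen large enough to contain $\bigcup X_t(x_{0:t-1},\xi_{t,j})$ over all feasible $x_{0:t-1}$ and all $j$ --- this union is bounded because, by (H2'), the sets $X_t(x_{0:t-1},\xi_{t,j})$ are nonempty with a common (hence trivial) recession cone while their right-hand sides range over a compact set, so the extra constraint $x_t\in B_t$ is nonbinding on feasible trajectories and the modified and original dynamic programming equations agree on the feasible region.

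\emph{Re-running the proof of Theorem \ref{convalg1}.} One cannot quote Theorem \ref{convalg1} verbatim, since assumptions (H2)-4) and (H2)-5) (the $\varepsilon$-fattening nonemptiness and the Slater-type condition) need not hold for the modified problem; instead I would re-inspect the proofs of Lemmas \ref{lipqtk}, \ref{eqqeqthetlemma} and of Theorem \ref{convalg1} and verify that they only use: (a) convexity, finiteness and Lipschitz continuity of $\mathcal{Q}_t$ on its (polyhedral, compact) effective domain --- automatic since $\mathcal{Q}_t$ is polyhedral; (b) the relations $\mathcal{Q}_t^{k}\leq\mathcal{Q}_t$ and $\mathcal{Q}_t^{k-1}\leq\mathcal{Q}_t^{k}$, together with $\mathcal{Q}_t^k(x_{[n_{t-1}^k]}^k)=\theta_t^k$, all of which follow from the cut construction in \eqref{forward1} exactly as in Lemmas \ref{lipqtk} and \ref{eqqeqthetlemma} (the proof of Lemma \ref{eqqeqthetlemma} uses only interstage independence and convexity); (c) uniform boundedness of the cut coefficients $(\theta_t^k,\beta_t^k)_{k\geq\bar k}$, hence uniform Lipschitz continuity of the $\mathcal{Q}_t^k$ --- here, in place of the $\varepsilon$-fattening estimate, one uses an induction on $t$ together with the polyhedral structure of \eqref{forward1} (the $z$-column forces the optimality-cut multipliers $\pi_{k,m,2}$ into a simplex, so they are bounded uniformly in $k$, and one may take the remaining multipliers at vertices of the --- otherwise $k$-independent --- dual feasible set); (d) the sampling independence (H3), used verbatim. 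With (a)--(d) available, the backward induction establishing $\mathcal{H}(2),\dots,\mathcal{H}(T+1)$ and the first-stage argument of Theorem \ref{convalg1} carry over, yielding \eqref{variantalg1} and Theorem \ref{convalg1}-(ii) for the tail iterations, hence in the limit. The main obstacle is precisely part (c): proving the cut coefficients stay bounded without (H2)-4)--5), which is what the $\varepsilon$-fattening arguments of Section \ref{framework} handled in the convex case; the backward induction on $t$ combined with the simplex constraint on the optimality-cut multipliers is the way I would get around it.
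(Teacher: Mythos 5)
Your proposal is correct and follows essentially the same route as the paper's proof: establish that only finitely many distinct feasibility cuts can ever be generated, deduce the infeasibility/feasibility dichotomy, and observe that once the feasibility cuts stabilize Algorithm 2 coincides with the variant of Algorithm 1 described in Remark \ref{importantremark}, whose convergence argument goes through in the polyhedral setting. You spell out details (finiteness of the cut family via the piecewise-linear phase-I value functions and a backward induction, boundedness of the cut coefficients via the dual structure of \eqref{forward1}, the compact-box reduction) that the paper simply asserts as direct consequences of (H2') and (H1), but the overall structure is the same.
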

\begin{proof} Due to Assumption (H2'), recourse
functions $\mathcal{Q}_t$ are convex polyhedral and Lipschitz continuous.
Moreover, Assumption (H2') also guarantees that
\begin{itemize}
\item[(a)] all linear programs \eqref{forward1}  are feasible and have bounded primal and dual
feasible sets. As a result, functions $(\mathcal{Q}_t^k)_{t, k}$ are also Lipschitz continuous convex
and polyhedral. 
\item[(b)] The feasible set of \eqref{phaseI} is bounded and nonempty. 
\end{itemize}
From (b) and Assumption (H1), we obtain that there is only a finite number 
of different feasibility cuts. From the definition of these feasibility cuts,
the feasible set of \eqref{phaseI} contains the first stage feasible set.
As a result, if \eqref{phaseI} is not feasible, there is no solution to 
\eqref{defmathfrak1}. Otherwise, since only a finite
number of different feasibility cuts can be generated, after some iteration
$k_0$ no more feasibility cuts are generated. In this case, 
after iteration $k_0$, Algorithm 2 is the variant of Algorithm 1 
described in Remark \ref{importantremark} and the proof can be achieved combining the
proof of Algorithm 1 and Remark \ref{importantremark}.\hfill
\end{proof}

\section{Convergence analysis with interstage dependent processes} \label{idp}

Consider a problem of form \eqref{pbinit0} with an interstage dependent process $(\xi_t)$,  
and let Assumption (H2) hold.
We assume that the stochastic process $(\xi_t)$ is discrete 
with a finite number of realizations at each stage.
The realizations of the process over the optimization period can still be represented by a finite scenario
tree with the root node $n_0$ associated to a fictitious stage 0 with decision $x_0$ taken at that
node. The unique child node $n_1$ of this root node corresponds to
the first stage (with $\xi_1$ deterministic).
In addition to the notation introduced in Section \ref{sec:relativelcr}, we also define
$\tau_n$ to be the stage associated to node $n$.

For interstage dependent processes, Algorithm 1 can be extended in two ways.
For some classes of processes, we can add in the state
vectors past process values while preserving the convexity of the recourse
functions. We refer to \cite{morton}, \cite{guiguescoap2013} for more details.
The convergence of Algorithm 1 applied to the corresponding dynamic programming
equations can be proved following the developments of Sections \ref{framework}
and \ref{sec:relativelcr}.

It is also possible to deal with more general interstage dependent processes
as in \cite{lecphilgirar12}. However, in this case, recourse functions are not linked to stages
but to the nodes of the scenario tree.
In this context, we associate to each node $n$ of the tree a coherent risk measure
$\rho_n: \mathbb{R}^{|C(n)|}\rightarrow \mathbb{R}$ and risk measure $\rho_{t+1|\mathcal{F}_t}$ in formulation
\eqref{pbinit0} is given by the collection of the risk measures $(\rho_n)_{n\;:\;\tau_n =t}$.
More precisely, we consider the following dynamic programming equations: for every node $n$
which is neither the root node nor a leaf, using the dual representation of risk measure $\rho_n$,
we define the recourse function
\begin{equation} \label{dpinterstagedep}
\mathcal{Q}_n( x_{[n]} )=\rho_n \Big( \mathfrak{Q}_n( x_{[n]}, (\xi_{m})_{m \in C(n)} )  \Big)= 
\displaystyle \sup_{p \in \mathcal{P}_n} \; \sum_{m \in C(n)} \; p_m \Phi_m  \mathfrak{Q}_n( x_{[n]}, \xi_{m})
\end{equation}
for some convex subset $\mathcal{P}_n$ of
$$
\mathcal{D}_n=\{p \in \mathbb{R}^{|C(n)|} : p \geq 0,\;\;\sum_{m \in C(n)}\;p_m \Phi_{m} =1  \},
$$
where $\mathfrak{Q}_n( x_{[n]}, \xi_{m})$ is given by
\begin{equation}\label{eqqnsnxim}
\left\{
\begin{array}{l}
\displaystyle \inf_{x_m} \;F_{\tau_m}(x_{[n]}, x_m, \Psi_{ m})\\
x_m \in \mathcal{X}_{\tau_m},\; g_{\tau_m}(x_0, x_{[n]}, x_{m}, \Psi_{m}) \leq 0,\\
\mbox{[ } A_{0, m}, \ldots, A_{\tau_m, m}\mbox{]} \mbox{[}x_0; x_{[n]}; x_{m}\mbox{]}=b_m
\end{array}
\right.
=
\left\{
\begin{array}{l}
\displaystyle \inf_{x_m} \;F_{\tau_m}(x_{[n]}, x_m, \Psi_{m})\\
x_m \in X_{\tau_m}\Big( x_0, x_{[n]}, \xi_{m}  \Big)
\end{array}
\right.
\end{equation}
with
$$
F_{\tau_m}(x_{[n]}, x_m, \Psi_{m})=f_{\tau_m}(x_{[n]}, x_m, \Psi_{m}) + \mathcal{Q}_{m}(x_{[n]}, x_{m}).
$$
If $n$ is a leaf node then $\mathcal{Q}_n \equiv 0$.
For the first stage, we solve problem \eqref{eqqnsnxim} with $n=n_0$ and $m=n_1$,
with optimal value denoted by $\mathcal{Q}_{n_0}(x_{0})=\mathfrak{Q}_{n_0}(x_{0}, \xi_{n_1})$
where $\xi_{n_1}=\xi_{1}$.

Algorithm 3 solves these dynamic programming equations building at iteration $k$ polyhedral lower approximation
$\mathcal{Q}_n^k$ of $\mathcal{Q}_n$ where
$$
\mathcal{Q}_n^k ( x_{[n]} ) = \max\Big(\theta_n^{\ell} + \langle  \beta_n^\ell, x_{[n]} - x_{[n]}^{\ell}\rangle , 0 \leq  \ell \leq k \Big)
$$ for all node $n \in \mathcal{N}\backslash \{n_0\}$.

If node $n$ is not a leaf, we introduce the function $\mathfrak{Q}_n^{k-1}$ such that
$\mathfrak{Q}_n^{k-1}( x_{[n]}, \xi_{m})$ is given by
\begin{equation} \label{equationalgorithm3}
\left\{
\begin{array}{l}
\displaystyle \inf_{x_m} \;F_{\tau_m}^{k-1}(x_{[n]}, x_m, \Psi_{m}):=f_{\tau_m}(x_{[n]}, x_m, \Psi_{m}) + \mathcal{Q}_{m}^{k-1}(x_{[n]}, x_{m})\\
g_{\tau_m}(x_0, x_{[n]}, x_{m}, \Psi_{m}) \leq 0,\\
\mbox{[ } A_{0, m}, \ldots, A_{\tau_m, m}\mbox{]} \mbox{[}x_0; x_{[n]}; x_{m}\mbox{]}=b_m\\
x_m \in \mathcal{X}_{\tau_m}.
\end{array}
\right.
\end{equation}
Next, we write $\mathfrak{Q}_n^{k-1}( x_{[n]}, \xi_{m})$ under the form
\begin{equation} \label{equationalgorithm3bis}
\left\{
\begin{array}{lll}
\displaystyle \inf_{x_m} \;f_{\tau_m}(x_{[n]}, x_m, \Psi_{m})+z&\\
g_{\tau_m}(x_0, x_{[n]}, x_{m}, \Psi_{m}) \leq 0,& [\pi_{k, m, 1}]\\
\mbox{[ } A_{0, m}, \ldots, A_{\tau_m, m}\mbox{]} \mbox{[}x_0; x_{[n]}; x_{m}\mbox{]}=b_m &  [\pi_{k, m, 2}]\\
z \geq \theta_{m}^{\ell} + \langle \beta_{m, 1}^{\ell}, x_{[n]}-x_{[n]}^{\ell} \rangle + \langle \beta_{m, 2}^{\ell}, x_{m}-x_{m}^{\ell} \rangle,\;\;\ell \leq k-1,   & [\pi_{k, m, 3}]\\
x_m \in \mathcal{X}_{\tau_m},&
\end{array}
\right.
\end{equation}
where $\beta_{m, 1}^{\ell}$ (resp. $\beta_{m, 2}^{\ell}$) contains the first $n \tau_n $ (resp. last $n$)
components of $\beta_{m}^{\ell}$.
In the above problem, we have
denoted by $\pi_{k, m, 1}, \pi_{k, m, 2}$,  and $\pi_{k, m, 3}$ the optimal Lagrange multipliers associated with
respectively the first, second, and third group of constraints. Finally, if $n$ is a leaf, $\mathfrak{Q}_n^{k-1}=0$.

For $n=n_0$, the optimal value of \eqref{equationalgorithm3} is denoted by
$\mathfrak{Q}_{n_0}^{k-1}( x_{0}, \xi_{1})$.\\
\rule{\linewidth}{1pt}
\par {\textbf{Algorithm 3: Multistage stochastic decomposition algorithm to solve \eqref{dpinterstagedep} for interstage dependent processes.}}\\
\par {\textbf{Initialization.}} Set  $\mathcal{Q}_{n}^0 \equiv 0$ for all leaf node $n$ and
$\mathcal{Q}_{n}^0 \equiv-\infty$ for all other node $n$.\\

\par {\textbf{For $k=1,2,\ldots,$}}
\par \hspace*{1cm}Sample a scenario $({\tilde \xi}_1^k, {\tilde \xi}_2^k, \ldots, {\tilde \xi}_T^k)$  for $(\xi_1, \xi_2, \ldots, \xi_T)$, realization of $(\xi_1^k, \ldots,\xi_T^k)$, 
\par \hspace*{1cm}i.e., sample a set of $T+1$ nodes $(n_0^k, n_1^k, n_2^k,\ldots, n_T^k)$ such that $n_0^k =  n_0$ is the root node, 
\par \hspace*{1cm}$n_1^k = n_1$ is the node corresponding to the first stage, and  for every $t=2,\ldots,T$, node 
\par \hspace*{1cm}$n_t^k$ is a child of node $n_{t-1}^k$.
\par \hspace*{1cm}{\textbf{For}} $t=1,\ldots,T$,
\par \hspace*{1.7cm}{\textbf{For}} every node $n$ of stage $t-1$,
\par \hspace*{2.4cm}{\textbf{For}} every child node $m$ of node $n$, 
\par \hspace*{3.1cm}solve \eqref{equationalgorithm3} and denote by $x_{m}^k$ be an optimal solution. 
\par \hspace*{2.4cm}{\textbf{End For}}
\par \hspace*{2.4cm}{\textbf{If}} $t \geq 2$ and $n \neq n_{t-1}^k$, compute 
\begin{equation}\label{nonewcutalg3}
\theta_n^k = \mathcal{Q}_n^{k-1}(x_{[n]}^k) \mbox{ and }\beta_n^k \in \partial \mathcal{Q}_n^{k-1}(x_{[n]}^k).
\end{equation}
\par \hspace*{2.4cm}{\textbf{Else if }}$t \geq 2$ and $n = n_{t-1}^k$, then for every $m \in C(n)$ compute a subgradient 
\par \hspace*{3.1cm}$\pi_{k, m}$ of $\mathfrak{Q}_{n}^{k-1}(\cdot, \xi_{m})$ at $x_{[n]}^k$:
\begin{equation}\label{pikmdef}
\begin{array}{lll}
\pi_{k, m}& = &f'_{\tau_m, x_{[n]}}\Big(x_{[n]}^k, x_{m}^{k}, \Psi_{m} \Big)+ g'_{\tau_m, x_{[n]}}\Big(x_{[n]}^k, x_{m}^{k}, \Psi_{m} \Big) \pi_{k, m, 1}\\
&&+ \left(\begin{array}{c}A_{1, m}^\transp\\\vdots\\A_{\tau_m -1, m}^\transp \end{array}   \right) \pi_{k, m, 2}+ [\beta_{m, 1}^{0}, \ldots, \beta_{m, 1}^{k-1}]\pi_{k, m, 3},
\end{array}
\end{equation}
\par \hspace*{3.1cm}where $f'_{\tau_m, x_{[n]}}\Big(x_{[n]}^k, x_{m}^{k}, \Psi_{m} \Big)$ is a 
subgradient of convex function 
\par \hspace*{3.1cm}$f_{\tau_m} ( \cdot, x_{m}^{k}, \Psi_{m} )$ at $x_{[n]}^k$ and the $i$-th column of matrix $g'_{\tau_m, x_{[n]}}(x_{[n]}^k, x_{m}^{k}, \Psi_{m})$ 
\par \hspace*{3.1cm}is a subgradient at $x_{[n]}^k$ of the $i$-th component of convex function $g'_{\tau_m, x_{[n]}}(\cdot, x_{m}^{k}, \Psi_{m})$.
\par \hspace*{3.1cm}Update $\theta_{n}^k$ and $\beta_{n}^k$ computing
\begin{equation}\label{formulathetankalg3}
\theta_{n}^k =  \sum_{m \in C(n)}\; p_{k, m} \Phi_m \mathfrak{Q}_n^{k-1}( x_{[n]}^k, \xi_{m})  \mbox{ and }\beta_{n}^k = \sum_{m \in C(n)}\; p_{k, m} \Phi_m \pi_{k, m}
\end{equation}
\hspace*{3.1cm}where $p_{k, m}$ satisfies:
$$
\theta_{n}^k = \displaystyle \sup_{p \in \mathcal{P}_{n}} \; \sum_{m \in C(n)} \; p_m \Phi_m  \mathfrak{Q}_n^{k-1}( x_{[n]}^k, \xi_{m})= \displaystyle  \sum_{m \in C(n)}\; p_{k, m} \Phi_m \mathfrak{Q}_n^{k-1}( x_{[n]}^k, \xi_{m}).
$$
\par \hspace*{2.4cm}{\textbf{End If}}
\par \hspace*{1.7cm}{\textbf{End For}}
\par \hspace*{1cm}{\textbf{End For}}
\par \hspace*{1cm}Set $\theta_{n}^k=0$ and $\beta_{n}^k=0$ for every leaf $n$.
\par {\textbf{End For}}\\
\rule{\linewidth}{1pt}
\begin{thm}[Convergence analysis of Algorithm 3] \label{convalg3}
Consider the sequence of random variables $(x_n^k)_{k \in \mathbb{N}^*}, n \in \mathcal{N}$
and random functions $(\mathcal{Q}_n^{k})_{k \in \mathbb{N}}, n \in \mathcal{N}$,
generated by Algorithm 3. Let Assumptions (H2) hold and assume that $(\xi_t)$
is  a discrete random process with a finite set of possible realizations at each stage.
Also assume that the samples generated along the iterations are independent:
introducing the binary random variables 
$y_n^k$ such that $y_n^k=1$ if node $n$ is selected at iteration $k$ and $0$ otherwise,
the random variables $(y_{n_T^k}^k)_{k \in \mathbb{N}^*}$ are independent.
Then,
\begin{itemize}
\item[(i)] almost surely, for any node $n \in \mathcal{N}\backslash\{n_0\}$, we have
$$
\lim_{k \rightarrow +\infty } \;\mathcal{Q}_n^k(x_{[n]}^k)- \mathcal{Q}_n( x_{[n]}^k )=0.
$$
\item[(ii)] Almost surely, we have
$$\lim_{k \rightarrow +\infty} \;\mathfrak{Q}_{n_0}^{k}(x_0, \xi_1)=\mathcal{Q}_{n_0}(x_0),$$ i.e.,
the optimal value of the approximate first stage problems converges to the optimal
value of the first stage problem. Moreover, almost surely, any accumulation point of the sequence
$(x_{n_1}^k)_{k \in \mathbb{N}^*}$ is an optimal solution of the first stage problem.
\end{itemize}
\end{thm}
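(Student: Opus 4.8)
The plan is to run, over the scenario tree, the argument used for Algorithm~1 in Theorem~\ref{convalg1}, after transporting the auxiliary results of Section~\ref{framework} from the ``stage'' indexing to the ``node'' indexing. Concretely, I would first establish the node analogues of Proposition~\ref{convexityrec} and of Lemmas~\ref{lipqtk}, \ref{eqqeqthetlemma}, \ref{lipforqtk}: under Assumption~(H2) (which through (H2)-4.2) provides relatively complete recourse, so that each subproblem \eqref{equationalgorithm3} is feasible and solvable), each $\mathcal{Q}_n$ is convex, finite and Lipschitz continuous on $\bigl[\mathcal{X}_1\times\cdots\times\mathcal{X}_{\tau_n}\bigr]^{\hat\varepsilon}$ for $0<\hat\varepsilon<\varepsilon$; the coefficients $(\theta_n^k,\beta_n^k)$ built by Algorithm~3 satisfy $\mathcal{Q}_n^{k-1}\le\mathcal{Q}_n^k\le\mathcal{Q}_n$; via Lemma~\ref{dervaluefunction} (whose Slater condition comes from (H2)-5)) the vector $\pi_{k,m}$ of \eqref{pikmdef} is a subgradient of $\mathfrak{Q}_n^{k-1}(\cdot,\xi_m)$ at $x_{[n]}^k$, and via Proposition~\ref{continuityvalf} these subgradients are uniformly bounded, so the functions $\mathcal{Q}_n^k$ are eventually Lipschitz with one common constant $L$; and, crucially, the cut produced at a sampled node is tight there, i.e.\ $\mathcal{Q}_n^k(x_{[n]}^k)=\theta_n^k$ whenever $n=n_{\tau_n}^k$. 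Each of these is proved as in Section~\ref{framework}, replacing induction on $t$ by induction on the tree and using $\mathcal{Q}_n^k\ge\mathcal{Q}_n^{k-1}$; moreover the iterates $x_{[n]}^k$ lie in the fixed compact set $\mathcal{X}_1\times\cdots\times\mathcal{X}_{\tau_n}$, which supplies ingredient (i) of the introduction.

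For part~(i) I would proceed by backward induction over the tree, proving $\mathcal{H}(n):\ \mathcal{Q}_n(x_{[n]}^k)-\mathcal{Q}_n^k(x_{[n]}^k)\to0$. The base case is a leaf, where $\mathcal{Q}_n\equiv\mathcal{Q}_n^k\equiv0$. For a non-leaf, non-root node $n$, assume $\mathcal{H}(m)$ for all $m\in C(n)$, and let $\mathcal{S}_n=\{k:\ y_n^k=1\}$, which is a.s.\ infinite by the sampling hypothesis (the $y_n^k$ being functions of the independently chosen leaves, with $\mathbb{P}(y_n^1=1)>0$). For $k\in\mathcal{S}_n$ one has $n=n_{\tau_n}^k$; using tightness of the cut, the dual representation \eqref{dpinterstagedep}, monotonicity of $\rho_n$, the identity $\mathfrak{Q}_n^{k-1}(x_{[n]}^k,\xi_m)=F_{\tau_m}(x_{[n]}^k,x_m^k,\Psi_m)-\mathcal{Q}_m(x_{[n]}^k,x_m^k)+\mathcal{Q}_m^{k-1}(x_{[n]}^k,x_m^k)$, and $F_{\tau_m}(x_{[n]}^k,x_m^k,\Psi_m)\ge\mathfrak{Q}_n(x_{[n]}^k,\xi_m)$ (because $x_m^k$ is feasible for \eqref{eqqnsnxim}), one gets
\[
0\le\mathcal{Q}_n(x_{[n]}^k)-\mathcal{Q}_n^k(x_{[n]}^k)\le\sup_{p\in\mathcal{P}_n}\sum_{m\in C(n)}p_m\Phi_m\Bigl[\mathcal{Q}_m(x_{[m]}^k)-\mathcal{Q}_m^{k-1}(x_{[m]}^k)\Bigr],\qquad x_{[m]}^k=(x_{[n]}^k,x_m^k).
\]
By $\mathcal{H}(m)$, the uniform Lipschitz bound on the $\mathcal{Q}_m^k$, the sandwich $\mathcal{Q}_m^{k-1}\le\mathcal{Q}_m^k\le\mathcal{Q}_m$ and boundedness of $(x_{[m]}^k)_k$, Lemma~A.1 of \cite{lecphilgirar12} yields $\mathcal{Q}_m(x_{[m]}^k)-\mathcal{Q}_m^{k-1}(x_{[m]}^k)\to0$, hence $\mathcal{Q}_n(x_{[n]}^k)-\mathcal{Q}_n^k(x_{[n]}^k)\to0$ along $k\in\mathcal{S}_n$.

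To pass from $\mathcal{S}_n$ to all of $\mathbb{N}$, I would repeat the Strong Law of Large Numbers argument from the end of the proof of Theorem~\ref{convalg1}. If $\mathcal{H}(n)$ failed, then, since $\mathcal{Q}_n^k\ge\mathcal{Q}_n^{k-1}$, the set $\mathcal{K}_{n,\varepsilon}=\{k:\ \mathcal{Q}_n(x_{[n]}^k)-\mathcal{Q}_n^{k-1}(x_{[n]}^k)\ge\varepsilon\}$ would be infinite for some $\varepsilon>0$; the convergence along $\mathcal{S}_n$ together with Lemma~A.1 of \cite{lecphilgirar12} forces $\mathcal{K}_{n,\varepsilon}\cap\mathcal{S}_n$ to be finite; and since $(y_n^k)_k$ are independent and $y_n^k$ is independent of the $\sigma$-field generated by the data computed before the sampling at step~$k$, Lemma~A.3 of \cite{lecphilgirar12} extracts from $\{y_n^k:\ k\in\mathcal{K}_{n,\varepsilon}\}$ an i.i.d.\ subsequence whose empirical means must converge simultaneously to $\mathbb{P}(y_n^1=1)>0$ and to $0$, a contradiction; this proves $\mathcal{H}(n)$ and hence (i). Part~(ii) then follows exactly as in Theorem~\ref{convalg1}-(ii), with $n_0$ in the role of the root and $n_1$ in the role of the first-stage node: from $\mathfrak{Q}_{n_0}^{k-1}(x_0,\xi_1)=f_1(x_{n_1}^k,\Psi_1)+\mathcal{Q}_{n_1}^{k-1}(x_{n_1}^k)$, feasibility of $x_{n_1}^k$ and $\mathcal{H}(n_1)$, one obtains $\mathfrak{Q}_{n_0}^{k}(x_0,\xi_1)\to\mathcal{Q}_{n_0}(x_0)$; and passing to the limit along a subsequence with $x_{n_1}^k\to x_{n_1}^*$, using continuity of $f_1(\cdot,\Psi_1)$ and of $\mathcal{Q}_{n_1}$ and closedness of $X_1(x_0,\xi_1)$, shows that $x_{n_1}^*$ is optimal for the first stage problem.

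The main obstacle is concentrated in the preliminary block: one must verify that every estimate of Section~\ref{framework} survives when recourse functions are attached to nodes rather than stages and when cuts cannot be shared across nodes of the same stage, in particular the uniform boundedness of the subgradients $\pi_{k,m}$ (which rests on Lemma~\ref{dervaluefunction} and Proposition~\ref{continuityvalf}, hence on (H2)-4) and (H2)-5)) and the tightness $\mathcal{Q}_n^k(x_{[n]}^k)=\theta_n^k$ at sampled nodes. Once these are in place, the backward induction over the tree and the SLLN step are word-for-word adaptations of the proof of Theorem~\ref{convalg1}.
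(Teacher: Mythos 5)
Your proposal follows essentially the same route as the paper's proof: node-indexed analogues of Proposition \ref{convexityrec} and Lemmas \ref{lipqtk}--\ref{lipforqtk}, the tightness identity $\mathcal{Q}_n^k(x_{[n]}^k)=\theta_n^k$ (which the paper establishes for all $k$, also at non-sampled iterations via \eqref{nonewcutalg3}, though your restriction to sampled iterations is what the argument actually uses), the same backward-induction chain of inequalities bounding $\mathcal{Q}_n-\mathcal{Q}_n^k$ by the children's gaps, Lemma A.1 and the SLLN contradiction to pass from $\mathcal{S}_n$ to all $k$, and part (ii) exactly as in Theorem \ref{convalg1}. The proposal is correct and matches the paper's argument.
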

\begin{proof} 
We provide the main steps of 
the proof which follows closely the proofs of Sections \ref{framework}  and \ref{sec:relativelcr}.

We prove (i) by backward induction on the number of stages. Following the proof of Proposition \ref{convexityrec}, we show
that $\mathcal{Q}_n$ is continuous on $\mathcal{X}_{1} \small{\times} \ldots \small{\times} \mathcal{X}_{\tau_n}$ for all 
$n \in \mathcal{N}\backslash\{n_0\}$. Following the proof of
Lemma \ref{lipqtk}, we show that for all $n \in \mathcal{N}\backslash\{n_0\}$ and $k$ sufficiently large,
say $k \geq T_0$, $\mathcal{Q}_n^k$ is Lipschitz continuous and for every $m$ the sequence
$(\pi_{k, m})_k$ given by \eqref{pikmdef} is bounded.

We also observe that 
\begin{equation}\label{thetankqnk}
\mathcal{Q}_n^{k}( x_{[n]}^k )= \theta_n^k, \forall k \geq 1,\;\forall n \in \mathcal{N}\backslash\{n_0\}.
\end{equation}
Indeed, if $n$ is a leaf the above relation holds by definition of $\theta_n^k$ and $\mathcal{Q}_n^{k}$.
Let us show \eqref{thetankqnk} when $n$ is not a leaf.
In this case, if $k \notin \mathcal{S}_n=\{k \geq 1 : n_{\tau_n}^k = n\}$ then
using \eqref{nonewcutalg3}, we have $\theta_n^k = \mathcal{Q}_n^{k-1}(x_{[n]}^k)$
and $\mathcal{Q}_n^{k}( x_{[n]}^k )=\max(\theta_n^k, \mathcal{Q}_n^{k-1}( x_{[n]}^k ))=\theta_n^k$.
To show \eqref{thetankqnk} for $k \in \mathcal{S}_n$, observe that
for a given node $n$, a new cut is added
at iteration $k$ for $\mathcal{Q}_n$ only when $k \in \mathcal{S}_n$.
It follows that
$$
\mathcal{Q}_n^{k}( x )=\max_{\ell \in \mathcal{S}_n^k} \theta_n^{\ell} + \langle  \beta_n^\ell, x - x_{[n]}^{\ell}\rangle
$$
where
$\mathcal{S}_n^k = \{1 \leq  j \leq k : j \in \mathcal{S}_n\} \bigcup \{0\}$.
For $k \in \mathcal{S}_n$ and $\ell \in \mathcal{S}_n^k$  with $1 \leq \ell<k$ we have 
$$
\begin{array}{lll}
\theta_n^k & = & \displaystyle \sup_{p \in \mathcal{P}_n} \; \sum_{m \in C(n)} \; p_m \Phi_m  \mathfrak{Q}_n^{k-1}( x_{[n]}^k, \xi_{m})\mbox{ using }\eqref{formulathetankalg3} \mbox{ and the fact that }k \in \mathcal{S}_n,\\
& \geq  & \displaystyle \sup_{p \in \mathcal{P}_n} \; \sum_{m \in C(n)} \; p_m \Phi_m  \mathfrak{Q}_n^{\ell-1}( x_{[n]}^k, \xi_{m})\mbox{ by monotonicity},\\
& \geq  & \displaystyle \sup_{p \in \mathcal{P}_n} \; \sum_{m \in C(n)} \; p_m \Phi_m  \Big( \mathfrak{Q}_n^{\ell-1}( x_{[n]}^{\ell}, \xi_{m}) +  \langle \pi_{\ell, m}, x_{[n]}^{k} - x_{[n]}^{\ell}\rangle   \Big) \mbox{ by definition of }\pi_{\ell, m},\\
& \geq  &\displaystyle  \sum_{m \in C(n)} \; p_{\ell, m} \Phi_m  \Big( \mathfrak{Q}_n^{\ell-1}( x_{[n]}^{\ell}, \xi_{m}) +  \langle \pi_{\ell, m}, x_{[n]}^{k} - x_{[n]}^{\ell}\rangle   \Big) \mbox{ since }(p_{\ell, m})_{m \in C(n)} \in \mathcal{P}_n,\\
& \geq  & \theta_n^{\ell} + \langle \beta_n^\ell, x_{[n]}^{k} - x_{[n]}^{\ell}   \rangle \mbox{ using }\eqref{formulathetankalg3} \mbox{ and the fact that }\ell \in \mathcal{S}_n.
\end{array}
$$
Observing that $k \in \mathcal{S}_n^k$, we get
$
\mathcal{Q}_n^{k}( x_{[n]}^k )=\max\Big(\theta_n^k, \max\Big(\theta_n^{\ell} + \langle  \beta_n^\ell, x_{[n]}^{k} - x_{[n]}^{\ell}\rangle, \ell<k, \ell \in \mathcal{S}_n^k \Big) \Big) = \theta_n^k
$
which shows \eqref{thetankqnk}.
We now prove (i) by induction (the proof is similar to the proof of (i) in Theorem \ref{convalg1}).

The induction hypothesis is that for each node $m$ of stage $t+1$, 
\begin{equation} \label{induchypothesis0}
\lim_{k \rightarrow +\infty} \;\mathcal{Q}_m( x_{[m]}^{k}  ) - \mathcal{Q}_m^{k}(x_{[m]}^{k})=0.
\end{equation}

The above relation is satisfied for every leaf $m$ of the tree.
Now assume that the induction hypothesis is true for each node $m$ of stage $t+1$ for some $t \in \{1,\ldots,T\}$.
We want to show that for each node $n$ of stage $t$, 
\begin{equation} \label{induchypothesis}
\lim_{k \rightarrow +\infty} \;\mathcal{Q}_n(x_{[n]}^{k}) - \mathcal{Q}_n^{k}(x_{[n]}^{k})=0.
\end{equation}
We first show that for each node $n$ of stage $t$, 
\begin{equation} \label{induchypothesisSn}
\lim_{k \rightarrow +\infty, \,k \in \mathcal{S}_n} \;\mathcal{Q}_n(x_{[n]}^{k}) - \mathcal{Q}_n^{k}(x_{[n]}^{k})=0.
\end{equation}
We deduce from the induction hypothesis \eqref{induchypothesis0} and Lemma A.1 in \cite{lecphilgirar12}
that
for each node $m$ of stage $t+1$
\begin{equation} \label{induchypothesisbis}
\lim_{k \rightarrow +\infty} \;\mathcal{Q}_m( x_{[m]}^{k}  ) - \mathcal{Q}_m^{k-1}(x_{[m]}^{k})=0.
\end{equation}
We then have for $k \in \mathcal{S}_n$
\begin{equation} \label{ineqproofintd0}
\begin{array}{lll}
\mathfrak{Q}_n^{k-1}\Big( x_{[n]}^k, \xi_m\Big) &=&  F_{\tau_m}\Big(x_{[m]}^k, \Psi_{m} \Big)-\mathcal{Q}_m\Big( x_{[m]}^k  \Big)+\mathcal{Q}_m^{k-1}\Big( x_{[m]}^k \Big) \\
 & \geq & \mathfrak{Q}_n \Big(x_{[n]}^k, \xi_{m} \Big)-\mathcal{Q}_m\Big( x_{[m]}^k  \Big)+\mathcal{Q}_m^{k-1}\Big( x_{[m]}^k \Big) 
\end{array}
\end{equation}
where the last inequality comes from the relation
$$
\mathfrak{Q}_n \Big(x_{[n]}^k, \xi_{m} \Big)=
\left\{
\begin{array}{l}
\displaystyle \inf_{x_m} \;F_{\tau_m}(x_{[n]}^k, x_m, \Psi_{m})\\
x_m \in X_{\tau_m}\Big( x_{[n]}^{k}, \xi_{m}  \Big)
\end{array}
\right\}
\leq F_{\tau_m}\Big(x_{[m]}^k, \Psi_{m} \Big)
$$
which holds since 
$x_m^k \in X_{\tau_m}\Big( x_{[n]}^{k}, \xi_{m}  \Big)$ for $k \in \mathcal{S}_n, m \in C(n)$.
It follows that
\begin{equation} \label{intdepnoeudm}
\begin{array}{lll}
0  &\leq &\mathfrak{Q}_n \Big(x_{[n]}^k, \xi_{m} \Big)-\mathfrak{Q}_n^{k-1} \Big(x_{[n]}^k, \xi_{m} \Big)  \leq  \mathcal{Q}_m \Big( x_{[m]}^k  \Big)-\mathcal{Q}_m^{k-1} \Big( x_{[m]}^k \Big).
\end{array}
\end{equation}
Combining \eqref{induchypothesisbis} and \eqref{intdepnoeudm} we obtain that for 
every node $m \in C(n)$
\begin{equation}\label{limitchildren}
\lim_{k \rightarrow +\infty, k \in \mathcal{S}_n} \;\mathfrak{Q}_n \Big(x_{[n]}^k, \xi_{m} \Big)-\mathfrak{Q}_n^{k-1} \Big(x_{[n]}^k, \xi_{m} \Big)=0.
\end{equation}
Next, for $k \in \mathcal{S}_n$,
$$
\begin{array}{l}
0 \leq \mathcal{Q}_n (x_{[n]}^k ) - \mathcal{Q}_n^k (x_{[n]}^k ) =
\displaystyle \sup_{p \in \mathcal{P}_n} \; \sum_{m \in C(n)} \; p_m \Phi_m  \mathfrak{Q}_n( x_{[n]}, \xi_{m}) - \theta_n^k \\
= \displaystyle \sup_{p \in \mathcal{P}_n} \; \sum_{m \in C(n)} \; p_m \Phi_m  \mathfrak{Q}_n( x_{[n]}, \xi_{m})-\displaystyle \sup_{p \in \mathcal{P}_{n}} \; \sum_{m \in C(n)} \; p_m \Phi_m  \mathfrak{Q}_n^{k-1}( x_{[n]}^k, \xi_{m})\\
\leq \displaystyle \sup_{p \in \mathcal{P}_n} \; \sum_{m \in C(n)} \; p_m \Phi_m  ( \mathfrak{Q}_n( x_{[n]}, \xi_{m})- \mathfrak{Q}_n^{k-1}( x_{[n]}^k, \xi_{m}) )\\
\end{array}
$$
Combining the above relation with \eqref{limitchildren} we have shown \eqref{induchypothesisSn}.

Next, following the end of the proof of Theorem \ref{convalg1}, we show by contradiction and using the Strong Law of Large Numbers
that 
$$
\displaystyle \lim_{k \rightarrow +\infty,\, k \notin \mathcal{S}_n  } \mathcal{Q}_{n}(x_{[n]}^{k})-\mathcal{Q}_{n}^{k-1}(x_{[n]}^{k} )=0,
$$
implying by monotonicity
\begin{equation}\label{inductionm5}
\displaystyle \lim_{k \rightarrow +\infty,\, k \notin {\tilde{\mathcal{S}}}_n  } \mathcal{Q}_{n}(x_{[n]}^{k})-\mathcal{Q}_{n}^{k}(x_{[n]}^{k} )=0,
\end{equation}
which achieves the proof of (ii).

Finally, the proof of (ii) is analogous to the proof of (ii) in Theorem \ref{convalg1}.\hfill
\end{proof}
We have an analogue of Remark \ref{importantremark} for Algorithm 3:
\begin{rem} \label{importantremark2}
Similarly to Algorithm 1, in Algorithm 3, decisions are computed at every iteration for all the nodes of the scenario tree.
However, in practice, decisions will only be computed for the nodes of the sampled scenarios and their children nodes.
This variant of Algorithm 3 will build the same cuts and compute the same decisions for the nodes of the
sampled scenarios as Algorithm 3. For this variant, for a node $n$, the decision variables $(x_n^k)_k$ are defined for
an infinite subset ${\tilde{\mathcal{S}}}_{n}$ of iterations where the sampled scenario passes through the parent node of node $n$, i.e., 
${\tilde{\mathcal{S}}}_{n}=\mathcal{S}_{\mathcal{P}(n)}$.
With this notation, applying Theorem \ref{convalg3}-(i), we get for $t=2,\ldots,T$,
$$
\lim_{k \rightarrow +\infty, k \in {\tilde{\mathcal{S}}}_{n}} \;\mathcal{Q}_n^k(x_{[n]}^k)- \mathcal{Q}_n( x_{[n]}^k )=0,
$$
almost surely, while Theorem \ref{convalg3}-(ii) still holds.
\end{rem}
We also have an analogue of Remark  \ref{remassumptions} for Algorithm 3.

\if{

Let us now fix $\varepsilon>0$ and $n \in {\tt{Nodes}}(t)$.
From the continuity of $\mathfrak{Q}_n(\cdot, \xi_m)$ and the fact that the sequence $(x_{[n]}^{k})_{k \in K_n}$ converges to $x_{[n]}^{*}$, 
there exists $k_{1}$ such that for $k \in K_n$ and $k \geq k_{1}$, we have for all $m \in C(n)$,
\begin{equation}\label{eqcontQb}
|\mathfrak{Q}_{n}(x_{[n]}^k, \xi_{m})-\mathfrak{Q}_{n}(x_{[n]}^{*}, \xi_{m})| \leq \frac{\varepsilon}{10}.
\end{equation}
Denoting by $0<M_1<+\infty$ an upper bound on $\|\pi_{k m}\|$ (valid for all $m$ and $k \geq T_0$),
since the sequence $(x_{[n]}^{k})_{k \in K_n}$ converges to $x_{[n]}^{*}$,
there exists $k_{2} \in K_n$ with $k_2 \geq T_0$ such that for $k \geq k_{2}$ and $k \in K_n$
\begin{equation}\label{eqfinb}
\|x_{[n]}^{k}-x_{[n]}^{*}\|  \leq \frac{\varepsilon}{10 M_1 }.
\end{equation}
Using \eqref{limitchildren}, for every $m \in C(n)$, there exists $k(3, m) \in K_{m}$
with $k(3, m) \geq \max(k_1, k_2)\geq T_0$
such that for every $k \in K_{m}$ with $k \geq k(3, m)$, we have
\begin{equation} \label{eqinductionb}
0 \leq \mathfrak{Q}_n\Big(x_{[n]}^k, \xi_{m} \Big)-\mathfrak{Q}_n^{k-1} \Big(x_{[n]}^k, \xi_{m} \Big) \leq \frac{\varepsilon}{10}.
\end{equation}
From the continuity of $\mathcal{Q}_n$ and the fact that the sequence $(x_{[n]}^{k})_{k \in K_n}$ converges to $x_{[n]}^{*}$,
there exists $k_{4}$ such that for $k \in K_n$ and $k \geq k_{4}$, we have
\begin{equation}\label{eqcontQbis}
|\mathcal{Q}_{n}(x_{[n]}^k)-\mathcal{Q}_{n}(x_{[n]}^{*})| \leq \frac{\varepsilon}{2}.
\end{equation}
Take now an arbitrary $k \in K_n$ with
$$
k \geq \max(T_0, k_1, k_2, \max(k(3,m),\;m \in C(n)), k_4)=\max(\max(k(3,m),\;m \in C(n)), k_4).
$$
Since $k \geq k(3, m)$ for every $m \in C(n)$, we have
\begin{equation} \label{eqfin0b}
\begin{array}{lll}
\theta_n^k &= &\mathcal{Q}_n^{k}( x_{[n]}^k ) =   \displaystyle \sup_{p \in \mathcal{P}_n} \; \sum_{m \in C(n)} \; p_m \Phi_m  \mathfrak{Q}_n^{k-1}( x_{[n]}^k, \xi_{m})\\
 & \geq  & \displaystyle \sup_{p \in \mathcal{P}_n} \;\sum_{m \in C(n)} \; p_m \Phi_m  \mathfrak{Q}_n^{k(3,m)-1}( x_{[n]}^k, \xi_{m})\\
& \geq  & \displaystyle \sup_{p \in \mathcal{P}_n} \; \sum_{m \in C(n)} \; p_m \Phi_m \left[ \mathfrak{Q}_{n}^{k(3,m)-1}(x_{[n]}^{k(3,m)}, \xi_{m}) +\langle  \pi_{k(3,m) \;m}, x_{[n]}^k - x_{[n]}^{k(3,m)} \rangle \right]
\end{array}
\end{equation}
using the convexity of $\mathfrak{Q}_{n}^{k(3,m)-1}(\cdot, \xi_m)$ and the fact that
$\pi_{k(3,m) \;m}$ is a subgradient of
$\mathfrak{Q}_{n}^{k(3,m)-1}(\cdot, \xi_{m})$ at $x_{[n]}^{k(3,m)}$ (recall that $k(3,m) \in K_{m}$).
We obtain
\begin{equation} \label{eqfin1b}
\begin{array}{l}
0 \leq \mathcal{Q}_n(x_{[n]}^k)-\mathcal{Q}_n^k(x_{[n]}^k) \leq \\
\displaystyle \sup_{p \in \mathcal{P}_n} \;\displaystyle \sum_{m \in C(n)} p_m\Phi_{m}\left[\mathfrak{Q}_n\Big(x_{[n]}^k, \xi_m \Big)-\mathfrak{Q}_{n}^{k(3,m)-1}(x_{[n]}^{k(3,m)}, \xi_{m})- \langle  \pi_{k(3,m)\,m}, x_{[n]}^k - x_{[n]}^{k(3,m)} \rangle \right]  \\
\leq \displaystyle \sup_{p \in \mathcal{P}_n} \;\displaystyle \sum_{m \in C(n)} p_m\Phi_{m}  \Big| \mathfrak{Q}_n\Big(x_{[n]}^k, \xi_m \Big)-\mathfrak{Q}_n\Big(x_{[n]}^*, \xi_m \Big)\Big|\\ 
\;\;\; + \displaystyle \sup_{p \in \mathcal{P}_n} \;\displaystyle \sum_{m \in C(n)} p_m\Phi_{m}  \Big|\mathfrak{Q}_{n}(x_{[n]}^{k(3,m)}, \xi_{m})-\mathfrak{Q}_{n}^{k(3,m)-1}(x_{[n]}^{k(3,m)}, \xi_{m})\Big|\\ 
 \;\;\; + \displaystyle \sup_{p \in \mathcal{P}_n} \;\displaystyle \sum_{m \in C(n)} p_m\Phi_{m}   \Big|\mathfrak{Q}_{n}(x_{[n]}^{k(3,m)}, \xi_{m})-\mathfrak{Q}_{n}(x_{[n]}^{*}, \xi_{m})\Big|\\
\;\; +\displaystyle \sup_{p \in \mathcal{P}_n} \;\displaystyle \sum_{m \in C(n)} p_m\Phi_{m} \|\pi_{k(3,m)\,m}\| \Big(  \|x_{[n]}^k - x_{[n]}^{*}\| + \|x_{[n]}^{k(3,m)}-x_{[n]}^{*}\| \Big) \leq \frac{\varepsilon}{2}.
\end{array}
\end{equation}
The last inequality was obtained using relations 
\eqref{eqcontQb}, \eqref{eqfinb}, \eqref{eqinductionb}, and the fact that $\sum_{m \in C(n)} p_m \Phi_{m}=1$ for any $p \in \mathcal{P}_n$.
Combining the above relation with \eqref{eqcontQbis}, we have shown that for every $\varepsilon>0$, for every
$k \geq \max(k_1, k_2, \max(k(3, m),\;m \in C(n)), k_4)$ with $k \in K_n$ we have
$|\mathcal{Q}_n^{k}(x_{[n]}^k)-\mathcal{Q}_n(x_{[n]}^{*})|\leq \varepsilon$. We have thus shown \eqref{induchypothesis}, which achieves the induction step
and the proof of (ii).

\begin{prop}\label{propconv1} 
Consider the sequence of random variables $x_t^k$ 
generated by Algorithm 1.
Let Assumptions (H1), (H2), and (H3) hold. Then with probability one, there exist 
$x_n^*, n \in \mathcal{N}$, and infinite subsets $K_n, n \in \mathcal{N}$, of integers, such that
for $t=1,\ldots,T$,
$$
H_1(t): \;\;\;\forall n \in {\tt{Nodes}}(t), \;\;\lim_{k \rightarrow +\infty,\, k \in K_n} (x_1^k, \ldots, x_{t}^{k}) = x_{[n]}^{*} \in  
\mathcal{X}_1 \small{\times} \ldots \small{\times} \mathcal{X}_t
$$
where the $i$-th component of $x_{[n]}^{*}$ is $x_{\mathcal{P}^{t-i}(n)}^*$ for $i=1,\ldots,t$. 
\end{prop}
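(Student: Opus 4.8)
The plan is to obtain $H_1(1),\ldots,H_1(T)$ all at once from two observations: the decisions produced by Algorithm~1 lie in fixed compact sets, and the scenario tree is finite. First, fix a non‑root node $m$, at stage $\tau_m$ say, and an iteration $k\geq T$. By Lemma~\ref{lipqtk}(b) the coefficients $\theta_{\tau_m+1}^{\ell},\beta_{\tau_m+1}^{\ell}$ are finite for every $\ell\geq T-\tau_m$, so $\mathcal{Q}_{\tau_m+1}^{k-1}$ is a finite‑valued polyhedral convex function; together with Assumption (H2) --- (H2)-4.2) for feasibility, (H2)-1) for compactness of $\mathcal{X}_{\tau_m}$, and the explicit constraint $x_{\tau_m}\in\mathcal{X}_{\tau_m}$ in problem~\eqref{defxtkj} --- this makes \eqref{defxtkj} a well‑posed convex program whose optimal value is attained at a point of $\mathcal{X}_{\tau_m}$. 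Hence $x_m^k$ is well defined for all $k\geq T$ and $x_m^k\in\mathcal{X}_{\tau_m}$; in particular $(x_m^k)_{k\geq T}$ is bounded. (Discarding the finitely many iterations $k<T$ does not affect any statement about $k\to+\infty$.)

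Next, by Assumption (H1) the process $(\xi_t)$ has $M$ realizations at each of the $T$ stages, so the node set $\mathcal{N}$ is finite; write $\mathcal{N}=\{m_1,\ldots,m_N\}$. Using boundedness and the Bolzano--Weierstrass theorem, extract a nested chain of infinite index sets $\{k\geq T\}\supseteq K^{(1)}\supseteq\cdots\supseteq K^{(N)}=:K$ along which $(x_{m_j}^k)_k$ converges, for $j=1,\ldots,N$ respectively; call the limit $x_{m_j}^*$, which lies in $\mathcal{X}_{\tau_{m_j}}$ because that set is closed (the root contributes only the constant sequence $x_{n_0}^k=x_0$). Set $K_n:=K$ for every $n\in\mathcal{N}$. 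Then for $t\in\{1,\ldots,T\}$ and $n\in{\tt{Nodes}}(t)$, since $x_{[n]}^k=(x_{\mathcal{P}^{t-1}(n)}^k,\ldots,x_{\mathcal{P}(n)}^k,x_n^k)$ has each of its $t$ blocks equal to the decision at one of the nodes $\mathcal{P}^{t-1}(n),\ldots,n\in\mathcal{N}$, letting $k\to+\infty$ along $K$ gives
$$\lim_{k\to+\infty,\,k\in K}x_{[n]}^k=(x_{\mathcal{P}^{t-1}(n)}^*,\ldots,x_{\mathcal{P}(n)}^*,x_n^*)=:x_{[n]}^*,$$
whose $i$‑th block is $x_{\mathcal{P}^{t-i}(n)}^*$; since $\mathcal{P}^{t-i}(n)$ is a stage‑$i$ node, $x_{\mathcal{P}^{t-i}(n)}^*\in\mathcal{X}_i$ and so $x_{[n]}^*\in\mathcal{X}_1\small{\times}\cdots\small{\times}\mathcal{X}_t$. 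This is exactly $H_1(t)$. As the whole construction is carried out on the probability‑one event on which Lemma~\ref{lipqtk} holds, the proposition follows.

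The only delicate point is the first step: one must rule out that the subproblems \eqref{defxtkj} be unbounded or infeasible at the iterations that matter, which is why I restrict to $k\geq T$ (so that at least one finite cut is already available for every recourse function) and lean on (H2)-1), (H2)-4.2) and Lemma~\ref{lipqtk}; everything afterwards is a finite iterated Bolzano--Weierstrass extraction plus bookkeeping on the parent map $\mathcal{P}$. I remark that for the variant of Algorithm~1 from Remark~\ref{importantremark}, where $x_n^k$ is defined only for $k$ in the random infinite set $\tilde{\mathcal{S}}_n=\mathcal{S}_{\mathcal{P}(n)}$, a single common subsequence $K$ is no longer available; one would instead build the sets $K_n$ recursively down the tree, at each step invoking Assumption (H3) (through a strong‑law / independence argument as in Lemma~A.3 of \cite{lecphilgirar12}) to guarantee that the already‑extracted subsequence still intersects $\mathcal{S}_n$ infinitely often --- which is precisely why the statement allows the index sets $K_n$ to depend on $n$.
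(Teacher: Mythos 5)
Your argument is sound and reaches the stated conclusion by a genuinely different route than the paper. The paper proves $H_1(1),\ldots,H_1(T)$ by induction on the stage: given the set $K_n$ attached to a node $n$ of stage $t$ (consisting of iterations whose \emph{sampled} scenario passes through $n$), it partitions $K_n$ according to which child node the sampled scenario visits, invokes Assumption (H3) together with a Borel--Cantelli argument to show that each cell of the partition is infinite, and then applies Bolzano--Weierstrass inside the cell of a child $m_0$ to extract $K_{m_0}\subset K_n$. You instead exploit the fact that Algorithm 1, as written, solves the subproblems at \emph{every} node at \emph{every} iteration, so that $x_n^k\in\mathcal{X}_{\tau_n}$ is available for all $n$ and all $k$ large enough (your restriction to $k\geq T$, leaning on Lemma \ref{lipqtk} for well-posedness, is fine); a single iterated extraction over the finite node set then yields one common index set $K$, and (H3) plays no role at all. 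What the paper's construction buys, and yours does not, is the additional property that $K_m\subset K_{\mathcal{P}(m)}$ and that every $k\in K_n$ satisfies $n_{\tau_n}^k=n$, so that for $k\in K_n$ the trial trajectory $(x_1^k,\ldots,x_t^k)$ along the sampled scenario literally equals $x_{[n]}^k$ and the limit attached to node $n$ is a limit of decisions computed at sampled visits to $n$; this is the form in which the proposition is used afterwards (e.g.\ to have $\tilde\xi_t^k=\xi_m$ for $k\in K_m$ and to connect the limits with the cuts, which are built only at sampled nodes). Under the paper's reading, in which $x_t^k$ denotes the stage-$t$ decision along the sampled scenario, your identification of $(x_1^k,\ldots,x_t^k)$ with $x_{[n]}^k$ is valid only for $k\in\mathcal{S}_n$, so your common-subsequence shortcut proves the statement in its ``all-nodes'' reading but not this stronger scenario-following version. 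Your closing remark about the variant of Remark \ref{importantremark} --- building the $K_n$ recursively down the tree and using (H3) to guarantee infinitely many visits --- is in fact essentially the paper's own proof, and is what one must do whenever the scenario-following property is needed.
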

\begin{proof}
We show $H_1(t)$ for $t=1,\ldots,T$, by induction on $t$.
Since the sequence $(x_{1}^{k})_{k \in \mathbb{N}^*}$ is a sequence of the compact set $\mathcal{X}_1$,
there exists an infinite set $K_1$ such that the sequence $(x_{1}^{k})_{k \in K_1}$ converges
to some $x_1^*$. This shows $H_1(1)$. Assume now that $H_1(t)$ holds for some $t \in \{1,\ldots,T-1\}$
and let us show that $H_1(t+1)$ holds. 
Let us take a node $m_0$ of stage $t+1$ and consider its parent node $n$.
Let us partition
$K_n$ as $K_n=\displaystyle{\cup_{m \in C(n)}} K'_m$ where
\begin{equation} \label{defkpm}
K'_m=\{k \in K_n : (\tilde \xi_1^k,\tilde \xi_2^k, \ldots,\tilde \xi_{t+1}^k)=\xi_{[m]}\}.
\end{equation}

Note that for every $m \in C(n)$, the set $K'_m$ has an infinite number of elements.
Indeed, since $K_n$ is infinite, the set of samples
$(\tilde \xi_1^k, \ldots, \tilde \xi_T^k)_{k \in K_n}$ constitute an infinite set of samples of $(\xi_1, \ldots, \xi_T)$
that all pass through node $n$. Due to Assumption (H3) and the Borel-Cantelli lemma, for every child
node $m$ of node $n$, the set of samples with indices in $K_n$
that pass through $m$ is infinite and this set is $K'_m$.
In particular, since the sequence $(x_{t+1}^k)_{k \in K'_{m_0}}$ is an infinite sequence from the compact set
$\mathcal{X}_{t+1}$, there exists $x_{m_0}^* \in \mathcal{X}_{t+1}$ and
an infinite subset $K_{m_0} \subset K'_{m_0} \subset K_n= K_{\mathcal{P}(m_0)}$ such
that 
\begin{equation} \label{firstconvoptsol}
\lim_{k \rightarrow +\infty,\, k \in K_{m_0}} x_{t+1}^k = x_{m_0}^*.
\end{equation}
Since node $n$ belongs to stage $t$, using the induction hypothesis, we have
$$
\lim_{k \rightarrow +\infty,\, k \in K_n} (x_1^k, \ldots, x_{t}^{k}) = x_{[n]}^{*} \in  
\mathcal{X}_1 \small{\times} \ldots \small{\times} \mathcal{X}_t.
$$
Since $K_{m_0}$ is an infinite set contained in $K_n$, we also have
$$
\lim_{k \rightarrow +\infty,\, k \in K_{m_0}} (x_1^k, \ldots, x_{t}^{k}) = x_{[n]}^{*} \in  
\mathcal{X}_1 \small{\times} \ldots \small{\times} \mathcal{X}_t,
$$
which, combined with \eqref{firstconvoptsol}, implies
$$
\lim_{k \rightarrow +\infty,\, k \in K_{m_0}} (x_1^k, \ldots, x_{t}^{k}, x_{t+1}^{k}) = (x_{[n]}^{*},x_{m_0}^*)=x_{[m_0]}^{*} \in  
\mathcal{X}_1 \small{\times} \ldots \small{\times} \mathcal{X}_{t+1}
$$
(since $n$ is the parent node of $m_0$) and achieves the induction step.\hfill
\end{proof}
For convenience, we shall denote by $(y(m,k))_{k \geq 0}$ the sequence of iterations, sorted in ascending order, belonging to $K_m$.
The following lemma will be useful in the sequel:
\begin{lemma}\label{convkm1}
Assume that for some $t=1,\ldots,T,$ and some $m \in {\tt{Nodes}}(t)$, we have
\begin{equation}\label{firstconvkm1}
\lim_{k \rightarrow +\infty,\,k \in K_m} \; \mathcal{Q}_{t+1}(x_{1:t}^{k})-\mathcal{Q}_{t+1}^{k}(x_{1:t}^{k})=0
\end{equation}
where sets $K_m$ are defined in the proof of Proposition \ref{propconv1}.
Then 
\begin{equation}\label{secconvkm1}
\lim_{k \rightarrow +\infty,\,k \in K_m} \; \mathcal{Q}_{t+1}^{k-1}(x_{1:t}^{k})=\mathcal{Q}_{t+1}(x_{[m]}^{*})
\end{equation}
where $x_{[m]}^{*}$ is defined in the proof of Proposition \ref{propconv1}.
\end{lemma}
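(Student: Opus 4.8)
The plan is to obtain \eqref{secconvkm1} from \eqref{firstconvkm1} by an index shift followed by a continuity argument, using only facts already available. The ingredients I would rely on are: by Proposition \ref{propconv1}, along $k\in K_m$ the decision history $x_{1:t}^k$ converges to $x_{[m]}^{*}\in\mathcal{X}_1{\small{\times}}\ldots{\small{\times}}\mathcal{X}_t$; by Proposition \ref{convexityrec}, $\mathcal{Q}_{t+1}$ is finite and continuous on $\mathcal{X}_1{\small{\times}}\ldots{\small{\times}}\mathcal{X}_t$; and by Lemmas \ref{lipqtk} and \ref{lipforqtk}, for all sufficiently large $k$ the cut approximations $\mathcal{Q}_{t+1}^{k}$ are convex, $L$-Lipschitz on $\mathcal{X}_1{\small{\times}}\ldots{\small{\times}}\mathcal{X}_t$, nondecreasing in $k$ (cuts are only appended, never removed, so $\mathcal{Q}_{t+1}^{k-1}\leq\mathcal{Q}_{t+1}^{k}$), and dominated by $\mathcal{Q}_{t+1}$ on that set. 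These are precisely the hypotheses under which Lemma A.1 in \cite{lecphilgirar12} applies.

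First I would apply Lemma A.1 in \cite{lecphilgirar12} along the infinite set $K_m$: from hypothesis \eqref{firstconvkm1}, namely $\mathcal{Q}_{t+1}(x_{1:t}^k)-\mathcal{Q}_{t+1}^{k}(x_{1:t}^k)\to 0$, it produces
\[
\lim_{k\to+\infty,\;k\in K_m}\mathcal{Q}_{t+1}(x_{1:t}^k)-\mathcal{Q}_{t+1}^{k-1}(x_{1:t}^k)=0.
\]
Since $x_{1:t}^k\to x_{[m]}^{*}$ along $k\in K_m$ and $\mathcal{Q}_{t+1}$ is continuous at $x_{[m]}^{*}$, one also has $\mathcal{Q}_{t+1}(x_{1:t}^k)\to\mathcal{Q}_{t+1}(x_{[m]}^{*})$; combining the two limits gives $\mathcal{Q}_{t+1}^{k-1}(x_{1:t}^k)\to\mathcal{Q}_{t+1}(x_{[m]}^{*})$, which is \eqref{secconvkm1}. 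An equivalent self-contained route avoids citing Lemma A.1: let $\mathcal{Q}_{t+1}^{\infty}$ be the pointwise limit of the bounded nondecreasing family $(\mathcal{Q}_{t+1}^{j})_{j}$; it satisfies $\mathcal{Q}_{t+1}^{\infty}\leq\mathcal{Q}_{t+1}$ and, being a pointwise limit of uniformly $L$-Lipschitz functions, is itself $L$-Lipschitz, hence continuous. Then \eqref{firstconvkm1} together with continuity of $\mathcal{Q}_{t+1}$ forces $\mathcal{Q}_{t+1}^{\infty}(x_{[m]}^{*})=\mathcal{Q}_{t+1}(x_{[m]}^{*})$, and $|\mathcal{Q}_{t+1}^{k-1}(x_{1:t}^k)-\mathcal{Q}_{t+1}^{\infty}(x_{[m]}^{*})|\leq L\|x_{1:t}^k-x_{[m]}^{*}\|+|\mathcal{Q}_{t+1}^{k-1}(x_{[m]}^{*})-\mathcal{Q}_{t+1}^{\infty}(x_{[m]}^{*})|\to 0$.

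The point that requires care is that for $k\in K_m$ the index $k-1$ need not belong to $K_m$, so \eqref{firstconvkm1} cannot simply be re-read with $k$ replaced by $k-1$; the index shift is legitimate only because the monotonicity $\mathcal{Q}_{t+1}^{k-1}\leq\mathcal{Q}_{t+1}^{k}$ and the uniform Lipschitz bound hold for the entire sequence of iterations, not merely along $K_m$ (the same mechanism is used when Lemma A.1 in \cite{lecphilgirar12} is invoked along $\mathcal{S}_n$ in the proof of Theorem \ref{convalg1}). A minor bookkeeping matter is that the Lipschitz bound of Lemma \ref{lipforqtk} holds only once $k$ is large, so the estimates are applied for $k\in K_m$ beyond some threshold, which discards finitely many indices and does not affect the limit. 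Apart from this I anticipate no real obstacle: the lemma is a short consequence of Propositions \ref{propconv1} and \ref{convexityrec} together with the Lipschitz and monotonicity properties established in Section \ref{framework}.
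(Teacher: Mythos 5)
Your proof is correct, and both of your routes rest on exactly the ingredients the paper's own argument uses: the full-sequence monotonicity $\mathcal{Q}_{t+1}^{k-1}\leq\mathcal{Q}_{t+1}^{k}\leq\mathcal{Q}_{t+1}$, the uniform Lipschitz constant $L$ of Lemma \ref{lipforqtk}, the continuity of $\mathcal{Q}_{t+1}$ (Proposition \ref{convexityrec}), and the convergence $x_{1:t}^{k}\to x_{[m]}^{*}$ along $K_m$ (Proposition \ref{propconv1}). The packaging differs: the paper gives a self-contained $\varepsilon$-argument which, writing $K_m=\{y(m,k)\}_{k}$, freezes a sufficiently large iterate $\ell=y(m,k(m,1))$ and sandwiches $\mathcal{Q}_{t+1}^{y(m,k)-1}(x_{1:t}^{y(m,k)})$ between $\mathcal{Q}_{t+1}^{\ell}(x_{1:t}^{y(m,k)})$ (monotonicity, since $y(m,k)-1\geq\ell$) and $\mathcal{Q}_{t+1}(x_{1:t}^{y(m,k)})$, then shows both bounds lie within $\varepsilon$ of $\mathcal{Q}_{t+1}(x_{[m]}^{*})$ via the $L$-Lipschitz property of $\mathcal{Q}_{t+1}^{\ell}$ and the continuity of $\mathcal{Q}_{t+1}$. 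Your first route instead delegates the index shift to Lemma A.1 of \cite{lecphilgirar12} (which the paper invokes in the proof of Theorem \ref{convalg1}, but not in this lemma), and your second route replaces the frozen index $\ell$ by the monotone pointwise limit $\mathcal{Q}_{t+1}^{\infty}$; this is legitimate because the nondecreasing, uniformly $L$-Lipschitz family is bounded above by the finite continuous $\mathcal{Q}_{t+1}$ on the compact set, so $\mathcal{Q}_{t+1}^{\infty}$ is $L$-Lipschitz and the identification $\mathcal{Q}_{t+1}^{\infty}(x_{[m]}^{*})=\mathcal{Q}_{t+1}(x_{[m]}^{*})$ does follow from \eqref{firstconvkm1} together with continuity. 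You also correctly isolate the one delicate point, namely that $k-1$ (or $y(m,k)-1$) need not belong to $K_m$, so the shift must be bridged by monotonicity over the whole iteration sequence rather than by re-indexing \eqref{firstconvkm1}; this is precisely what the paper's sandwich at the frozen index accomplishes. Your version buys modularity (the shift done in one stroke, or reusable through $\mathcal{Q}_{t+1}^{\infty}$), while the paper's buys explicit thresholds and self-containment; the only bookkeeping point, which you already flag, is to discard the finitely many small $k$ for which $\mathcal{Q}_{t+1}^{k}$ may still equal $-\infty$ so that Lemma \ref{lipforqtk} applies.
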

\begin{proof}
For some $t \in \{1,\ldots,T\}$ and $m \in {\tt{Nodes}}(t)$, assume that 
\eqref{firstconvkm1} holds. 
We want to show that \eqref{secconvkm1} holds. Fix $\varepsilon>0$.
Let $L$ be the Lipschitz constant for functions $\mathcal{Q}_{t+1}^k, k \geq T$, given by
Lemma \ref{lipforqtk}. From Proposition \ref{propconv1}, we have $\lim_{k \rightarrow +\infty} \;x_{1:t}^{y(m,k)}=x_{[m]}^{*}$.
It follows that there exists $k(m,0)$ such that for $k \geq k(m,0)$, we have
\begin{equation} \label{conxtmL}
\|x_{1:t}^{y(m,k)} - x_{[m]}^{*}  \| \leq \frac{\varepsilon}{4L}.
\end{equation}
Next, since $\mathcal{Q}_{t+1}$ is continuous on $\mathcal{X}_1 \small{\times} \ldots \small{\times} \mathcal{X}_t$ with 
$x_{1:t}^{y(m,k)}, x_{[m]}^{*}  \in \mathcal{X}_1 \small{\times} \ldots \small{\times} \mathcal{X}_t$,
\begin{equation}\label{induc0}
\lim_{k \rightarrow +\infty} \; \mathcal{Q}_{t+1}(x_{1:t}^{y(m,k)})=\mathcal{Q}_{t+1}(x_{[m]}^{*}).
\end{equation}
Using \eqref{firstconvkm1} and \eqref{induc0}, we have that
\begin{equation}\label{induc1}
\lim_{k \rightarrow +\infty} \; \mathcal{Q}_{t+1}^{y(m,k)}(x_{1:t}^{y(m,k)})=\mathcal{Q}_{t+1}(x_{[m]}^{*}).
\end{equation}
Using \eqref{induc0} and \eqref{induc1}, there exists $k(m,1)$ satisfying $k(m,1) \geq k(m,0)$ 
and $y(m, k(m,1)) \geq T$ such that for $k \geq k(m,1)$,
\begin{equation} \label{km1km0}
\begin{array}{l}
|\mathcal{Q}_{t+1}(x_{1:t}^{y(m,k)})-\mathcal{Q}_{t+1}(x_{[m]}^{*})| \leq \varepsilon,\;\;\;|\mathcal{Q}_{t+1}^{y(m,k)}(x_{1:t}^{y(m,k)})-\mathcal{Q}_{t+1}(x_{[m]}^{*})|\leq \frac{\varepsilon}{2}.
\end{array}
\end{equation}
Then for any $k \geq k(m,1)+1$, we deduce that
\begin{equation}\label{proofrhszero}
\begin{array}{l}
 \mathcal{Q}_{t+1}^{y(m, k(m,1))}(x_{1:t}^{y(m,k)}) - \mathcal{Q}_{t+1}(x_{[m]}^{*})
\leq \mathcal{Q}_{t+1}^{y(m,k)-1}(x_{1:t}^{y(m,k)}) - \mathcal{Q}_{t+1}(x_{[m]}^{*}) \\
\leq \mathcal{Q}_{t+1}(x_{1:t}^{y(m,k)}) - \mathcal{Q}_{t+1}(x_{[m]}^{*}) \leq \varepsilon.
\end{array}
\end{equation}
In \eqref{proofrhszero}, the first inequality is due to $\mathcal{Q}_{t+1}^{y(m, k(m, 1))} \leq \mathcal{Q}_{t+1}^{y(m, k)-1}$
because $y(m, k)-1 \geq y(m, k(m, 1))$ and the second inequality to $\mathcal{Q}_{t+1}^{y(m, k)-1} \leq \mathcal{Q}_{t+1}$.

We deduce that for $k \geq k(m, 1)+1$, we have
$$
\begin{array}{l}
|\mathcal{Q}_{t+1}^{y(m, k(m, 1))}(x_{1:t}^{y(m, k)})- \mathcal{Q}_{t+1}(x_{[m]}^{*})| \\
\leq  
|\mathcal{Q}_{t+1}^{y(m, k(m, 1))}(x_{1:t}^{y(m, k(m, 1))}) - \mathcal{Q}_{t+1}^{y(m, k(m, 1))}(x_{1:t}^{y(m, k)})| \\
\;\;\;+ |\mathcal{Q}_{t+1}^{y(m, k(m, 1))}(x_{1:t}^{y(m, k(m, 1))}) - \mathcal{Q}_{t+1}(x_{[m]}^{*}) | \\
\leq \frac{\varepsilon}{2} + L \|x_{1:t}^{y(m, k)}-x_{1:t}^{y(m, k(m, 1))}\| \leq \varepsilon
\end{array}
$$
using \eqref{km1km0}, the fact that $\mathcal{Q}_{t+1}^{y(m, k(m, 1))}$ is L-Lipschitz, and \eqref{conxtmL}.
From \eqref{proofrhszero} and the above relation, we get
$|\mathcal{Q}_{t+1}^{y(m, k)-1}(x_{1:t}^{y(m, k)}) - \mathcal{Q}_{t+1}(x_{[m]}^{*})| \leq \varepsilon$ for $k \geq k(m,1)+1$
which achieves the proof.\hfill
\end{proof}

Theorem \ref{convalg1}  shows the convergence of the sequence $\mathfrak{Q}_{1}^k(x_0, \xi_1)$ 
to $\mathcal{Q}_{1}(x_0)$ and that 
any accumulation point of the sequence $(x_1^k)_{k \in \mathbb{N}}$ is an optimal solution 
of the first stage problem \eqref{firststagepb}.
\begin{thm}[Convergence analysis of Algorithm 1 - Interstage independent process]\label{convalg1} 
Consider the sequences of random variables $x_t^k$ and random functions $\mathcal{Q}_ t^k$
generated by Algorithm 1.
Let Assumptions (H1), (H2), and (H3) hold. Then there exist 
$x_n^*, n \in \mathcal{N}$, and infinite subsets $K_n, n \in \mathcal{N}$, of integers,
defined in the proof of Proposition \ref{propconv1}, such that a.s.
\begin{itemize}
\item[(i)] for $t=2,\ldots,T$,
$$
H_1(t): \;\;\;\forall n \in {\tt{Nodes}}(t), \;\;\lim_{k \rightarrow +\infty,\, k \in K_n} (x_1^k, \ldots, x_{t}^{k}) = x_{[n]}^{*} \in  
\mathcal{X}_1 \small{\times} \ldots \small{\times} \mathcal{X}_t
 $$
where the $i$-th component of $x_{[n]}^{*}$ is $x_{\mathcal{P}^{t-i}(n)}^*$ for $i=1,\ldots,t$. 
\item[(ii)] For $t=2,\ldots,T$,
$$
H_2(t): \;\;\;\forall n \in {\tt{Nodes}}(t-1), \;\; \displaystyle \lim_{k \rightarrow +\infty,\, k \in K_n} \mathcal{Q}_{t}^{k}(x_{1:t-1}^{k})=\mathcal{Q}_{t}(x_{[n]}^{*} ) \mbox{ a.s. }
$$
\item[(iii)] $\displaystyle \lim_{k \rightarrow +\infty} \mathfrak{Q}_1^{k}(x_{0}, \xi_1)=\mathcal{Q}_{1}(x_0)$ and
if $f_1(\cdot, \Psi_1)$ is continuous on $\mathcal{X}_1$,
any accumulation point of the sequence $(x_1^k)_{k \in \mathbb{N}}$ is an optimal solution of the first stage problem \eqref{firststagepb}.
\end{itemize}
\end{thm}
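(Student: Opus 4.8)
The plan is to establish the three assertions in turn, the core being a backward induction on the stage index carried out essentially as in the interstage-independent analysis of the preceding sections. Assertion (i) is exactly Proposition~\ref{propconv1}, and I would simply recall its proof: one inducts on $t$, at each stage extracting a convergent subsequence of the stage-$t$ decisions from the compact set $\mathcal{X}_t$ (Assumption (H2)-1)), and using the independence of the sampling in (H3) together with the Borel--Cantelli lemma to ensure that, for every node $m$, the set of iterations already lying in $K_{\mathcal{P}(m)}$ whose sampled scenario passes through $m$ is still infinite; this yields the sets $K_n$ and the accumulation points $x_{[n]}^*\in\mathcal{X}_1\times\cdots\times\mathcal{X}_{\tau_n}$.

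For (ii) I would prove the auxiliary ``for all $k$'' statement $\mathcal{H}(t):\ \lim_{k\to+\infty}\bigl(\mathcal{Q}_t(x_{[n]}^k)-\mathcal{Q}_t^k(x_{[n]}^k)\bigr)=0$ for every $n\in{\tt{Nodes}}(t-1)$, by backward induction from $t=T+1$ (trivial, as $\mathcal{Q}_{T+1}\equiv\mathcal{Q}_{T+1}^k\equiv0$) down to $t=2$, and then deduce $H_2(t)$. Fix $n\in{\tt{Nodes}}(t-1)$. On an iteration $k$ whose sampled scenario passes through $n$ (so $n_{t-1}^k=n$), Lemma~\ref{eqqeqthetlemma} gives $\mathcal{Q}_t^k(x_{[n]}^k)=\theta_t^k$, and repeating the chain of inequalities \eqref{eqfin1conv}--\eqref{kinKn} --- via the dual representation \eqref{defqtcoherent} of $\rho_t$, the definitions of $\theta_t^k$ and of $x_m^k$, and the feasibility $x_m^k\in X_t(x_0,x_{[n]}^k,\xi_m)$ (which forces $F_t(x_{[m]}^k,\Psi_m)\geq\mathfrak{Q}_t(x_{[n]}^k,\xi_m)$) --- one gets $0\leq\mathcal{Q}_t(x_{[n]}^k)-\mathcal{Q}_t^k(x_{[n]}^k)\leq\sup_{p\in\mathcal{P}_t}\sum_{m\in C(n)}p_m\Phi_m\bigl[\mathcal{Q}_{t+1}(x_{[m]}^k)-\mathcal{Q}_{t+1}^{k-1}(x_{[m]}^k)\bigr]$. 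Each bracketed term tends to $0$ for all $k$ by the induction hypothesis $\mathcal{H}(t+1)$ at the (stage-$t$) children of $n$, together with the monotonicity $\mathcal{Q}_{t+1}^{k-1}\leq\mathcal{Q}_{t+1}^k\leq\mathcal{Q}_{t+1}$ of Lemma~\ref{lipqtk} and the uniform Lipschitz bound of Lemma~\ref{lipforqtk} (the step passing from $\mathcal{Q}_{t+1}^k$ to the lagged $\mathcal{Q}_{t+1}^{k-1}$ being Lemma~\ref{convkm1} and its full-sequence analogue, Lemma~A.1 of \cite{lecphilgirar12}); hence the gap tends to $0$ along the iterations passing through $n$. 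A Strong-Law-of-Large-Numbers / Borel--Cantelli argument, identical to the one in the interstage-independent case, then propagates this to all $k$, giving $\mathcal{H}(t)$; combining $\mathcal{H}(t)$ with $H_1(t-1)$ from (i) and the continuity of $\mathcal{Q}_t$ on the relevant compact set (Proposition~\ref{convexityrec}) yields $\lim_{k\to+\infty,\,k\in K_n}\mathcal{Q}_t^k(x_{[n]}^k)=\mathcal{Q}_t(x_{[n]}^*)$, that is, $H_2(t)$.

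Assertion (iii) is then short: since $\xi_1$ is deterministic, $n_1$ lies on every sampled scenario, so from \eqref{solfirststapp} and the feasibility of $x_{[n_1]}^k$ (which gives $F_1(x_{[n_1]}^k,\Psi_1)\geq\mathfrak{Q}_1(x_0,\xi_1)$) one has $0\leq\mathfrak{Q}_1(x_0,\xi_1)-\mathfrak{Q}_1^{k-1}(x_0,\xi_1)\leq\mathcal{Q}_2(x_{[n_1]}^k)-\mathcal{Q}_2^{k-1}(x_{[n_1]}^k)$, whose right-hand side tends to $0$ by $\mathcal{H}(2)$ and Lemma~\ref{convkm1}, so that $\mathfrak{Q}_1^{k}(x_0,\xi_1)\to\mathfrak{Q}_1(x_0,\xi_1)=\mathcal{Q}_1(x_0)$. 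For an accumulation point $x_{n_1}^*$ of $(x_{n_1}^k)$ along a subsequence $K$, I would pass to the limit in $f_1(x_{n_1}^k,\Psi_1)+\mathcal{Q}_2^{k-1}(x_{n_1}^k)=\mathfrak{Q}_1^{k-1}(x_0,\xi_1)$ using the assumed continuity of $f_1(\cdot,\Psi_1)$, the continuity of $\mathcal{Q}_2$, and $\mathcal{Q}_2^{k-1}(x_{n_1}^k)\to\mathcal{Q}_2(x_{n_1}^*)$ along $K$ (by the same ingredients as in (ii)), obtaining $f_1(x_{n_1}^*,\Psi_1)+\mathcal{Q}_2(x_{n_1}^*)=\mathcal{Q}_1(x_0)$; since $X_1(x_0,\xi_1)$ is closed, $x_{n_1}^*$ is feasible and therefore optimal for \eqref{firststagepb}.

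The step I expect to be the main obstacle is the Strong-Law-of-Large-Numbers passage inside (ii): one must argue from the independence in (H3) that the (possibly infinite) set of iterations on which $\mathcal{Q}_t(x_{[n]}^k)-\mathcal{Q}_t^{k-1}(x_{[n]}^k)$ stays $\geq\varepsilon$ cannot meet the a.s.\ infinite set of iterations whose scenario passes through $n$ infinitely often, since otherwise the empirical mean of the indicators $y_n^k$ over the former set would converge both to $\mathbb{P}(y_n^1>0)>0$ and to $0$. Making this rigorous needs the i.i.d./Borel--Cantelli bookkeeping of Lemmas~A.1 and~A.3 of \cite{lecphilgirar12}, combined with the monotonicity of the cuts and the uniform Lipschitz bounds of Lemmas~\ref{lipqtk} and~\ref{lipforqtk}, which are what make the cut built on an iteration through $n$ still a good lower approximation at the nearby trial points visited on later iterations.
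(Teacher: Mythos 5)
Your proposal is correct, but it reaches the statement by a genuinely different route than the paper's own proof of it. The paper argues entirely along the extracted subsequences $K_n$: after the compactness/Borel--Cantelli construction of $K_n$ and $x_{[n]}^{*}$ (your item (i), identical), it proves $H_2(t)$ directly by backward induction, fixing $\varepsilon>0$, choosing for each child $m\in C(n)$ an anchor iteration $k(3,m)\in K_m$ at which the cut generated at $x_{1:t-1}^{k(3,m)}$ is nearly tight, and then bounding $0\le \mathcal{Q}_t(x_{1:t-1}^{k})-\mathcal{Q}_t^{k}(x_{1:t-1}^{k})\le\varepsilon$ for all large $k\in K_n$ via the subgradient inequality for $\pi_{k(3,m),m}$, the uniform bound on these subgradients, the continuity of $\mathfrak{Q}_t(\cdot,\xi_m)$ and of $\mathcal{Q}_t$, and the lagging lemma (passage from $\mathcal{Q}_{t+1}^{k}$ to $\mathcal{Q}_{t+1}^{k-1}$) along $K_m$; no Strong-Law-of-Large-Numbers step occurs anywhere in that proof, and the full-sequence limit in (iii) is recovered at the end from the monotonicity of $k\mapsto\mathfrak{Q}_1^{k}(x_0,\xi_1)$, re-running the subsequence construction with $K_{n_1}=K$ for an arbitrary accumulation point. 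You instead first establish the stronger full-sequence property $\mathcal{H}(t)$ at every node — which is precisely the argument of the theorem of Section \ref{sec:relativelcr}, using Lemma \ref{eqqeqthetlemma}, the chain of inequalities, Lemmas \ref{lipqtk} and \ref{lipforqtk}, and the i.i.d./SLLN bookkeeping of Lemmas A.1 and A.3 of \cite{lecphilgirar12} — and then deduce $H_2(t)$ as an immediate corollary of $\mathcal{H}(t)$, $H_1(t-1)$ and the continuity of $\mathcal{Q}_t$ (Proposition \ref{convexityrec}); item (iii) then also follows for the whole sequence at once. Both routes are sound and rest on the same structural facts (validity and monotonicity of the cuts, uniform Lipschitz bounds, compactness). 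What your route buys is a stronger intermediate conclusion (the gap vanishes along all iterations, at every node), which turns (ii) and (iii) into short corollaries and avoids the slightly awkward re-extraction of a subsequence for an arbitrary accumulation point in (iii); what it costs is the extra probabilistic machinery (independence of the selection indicators, Lemma A.3, the SLLN contradiction argument), which the paper's subsequence-based proof of this particular theorem does not need, since it only uses that each set of iterations in $K_n$ passing through a given child $m$ is infinite.
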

\begin{proof}
Item (i) is Proposition \ref{propconv1}.

We show (ii) by induction backwards in time. 
Recalling that $\mathcal{Q}_{T+1}=\mathcal{Q}_{T+1}^{k}=0$, we have
$\mathfrak{Q}_T^{k-1}(x_{1:T-1}^k, \cdot)=\mathfrak{Q}_T(x_{1:T-1}^k, \cdot)$ for all $k \in \mathbb{N}$.
It follows that for all $k \in \mathbb{N}$,
\begin{equation}\label{h2T}
\begin{array}{ccl}
\mathcal{Q}_T^k(x_{1:T-1}^k)& \stackrel{Lemma \;  \ref{eqqeqthetlemma}}{=} &\theta_T^k \stackrel{\eqref{formulathetak}}{=} 
\rho_T(\mathfrak{Q}_T^{k-1}(x_{1:T-1}^k, \xi_T))\\
&= &\rho_T(\mathfrak{Q}_T(x_{1:T-1}^k, \xi_T)) \stackrel{\eqref{definitionQt}}{=} \mathcal{Q}_T(x_{1:T-1}^k).
\end{array}
\end{equation}
From (i), for every node $n\in {\tt{Nodes}}(T-1)$, we have $\lim_{k \rightarrow +\infty,\, k \in K_{n}} x_{1:T-1}^k = x_{[n]}^* \in \mathcal{X}_1 \small{\times} \ldots \small{\times} \mathcal{X}_{T-1}$.
From Proposition \ref{convexityrec}, $\mathcal{Q}_T$ is continuous on 
$\mathcal{X}_1 \small{\times} \ldots \small{\times} \mathcal{X}_{T-1}$. It follows that
for every node $n\in {\tt{Nodes}}(T-1)$, taking the limit when  $k \rightarrow +\infty,\, k \in K_{n}$ in \eqref{h2T}, we obtain
$$
\begin{array}{lll} 
\lim_{k \rightarrow +\infty,\, k \in K_{n}} \mathcal{Q}_T^{k}(x_{1:T-1}^k) = \lim_{k \rightarrow +\infty,\, k \in K_{n}} \mathcal{Q}_T(x_{1:T-1}^k) =  
\mathcal{Q}_T(x_{[n]}^*),
\end{array}
$$
which is $H_2(T)$.

Now assume that $H_2(t+1)$ holds for some $t \in \{2,\ldots,T-1\}$. We want to show that $H_2(t)$ holds.
Take a node $n \in {\tt{Nodes}}(t-1)$, consider an arbitrary children node $m$ of $n$,
and $k \in K_n$ with $K_n$ defined in the proof of Proposition \ref{propconv1}.
For all $k \in K_m$, since $K_m \subset K'_m \subset K_n$ with $K'_m$ given by 
\eqref{defkpm}, we have $(\tilde \xi_1^k, \ldots, \tilde \xi_2^k, \ldots, \tilde \xi_{t}^k)=\xi_{[m]}$.
In particular, $\tilde \xi_{t}^k=\xi_m$
(for the iterations $k \in K_m$, the sampled scenarios pass through node $m$).
We obtain
\begin{equation} \label{eqinit}
\begin{array}{llll}
\mathfrak{Q}_{t}^{k-1}(x_{1:t-1}^{k}, \xi_{m}) & = & \mathfrak{Q}_{t}^{k-1}(x_{1:t-1}^{k}, \tilde \xi_{t}^{k})  & \mbox{since }k \in K_m,\\
& = & f_t (x_{1:t}^{k}, \Psi_{m}) + \mathcal{Q}_{t+1}^{k-1}(x_{1:t}^k) & \mbox{by definition of }x_{t}^k,\\
& = & F_t( x_{1:t}^{k}, \Psi_{m}) - \mathcal{Q}_{t+1}(x_{1:t}^k) + \mathcal{Q}_{t+1}^{k-1}(x_{1:t}^k) & \mbox{by definition of }F_t.
\end{array}
\end{equation}
Next, since $x_t^k \in X_t(x_{0:t-1}^k, \tilde \xi_t^k)$, we have
\begin{equation} \label{ineqqt34}
\mathfrak{Q}_t(x_{1:t-1}^{k}, \xi_{m})=\mathfrak{Q}_t(x_{1:t-1}^{k}, \tilde \xi_{t}^{k})=
\left\{
\begin{array}{l}
\displaystyle \inf_{x_t}\;F_t(x_{1:t-1}^{k}, x_t, \Psi_{m})\\
x_t \in X_t(x_{0:t-1}^k, \tilde \xi_t^k) 
\end{array}
\right.
\leq F_t(x_{1:t}^k, \Psi_{m}).
\end{equation}
Plugging the above inequality into \eqref{eqinit}, we obtain for $k \in K_m$
\begin{equation} \label{firststep0}
\begin{array}{lll}
0 \leq \mathfrak{Q}_{t}(x_{1:t-1}^{k}, \xi_{m})-\mathfrak{Q}_{t}^{k-1}(x_{1:t-1}^{k}, \xi_{m}) &\leq & \mathcal{Q}_{t+1}(x_{1:t}^{k})-\mathcal{Q}_{t+1}^{k-1}(x_{1:t}^{k}).
\end{array}
\end{equation}
Since $m \in {\tt{Nodes}}(t)$, the induction hypothesis, (i), and the continuity of $\mathcal{Q}_{t+1}$ gives
$$
\lim_{k \rightarrow +\infty,\,k \in K_m} \; \mathcal{Q}_{t+1}(x_{1:t}^{k})-\mathcal{Q}_{t+1}^{k}(x_{1:t}^{k})=0.
$$
As a result, using Lemma \ref{convkm1}, (i), and the continuity of $\mathcal{Q}_{t+1}$, we get
$$
\lim_{k \rightarrow +\infty,\,k \in K_m} \; \mathcal{Q}_{t+1}(x_{1:t}^{k})-\mathcal{Q}_{t+1}^{k-1}(x_{1:t}^{k})=0.
$$
Combined with \eqref{firststep0}, we have shown that
\begin{equation}\label{limitchildren0}
\lim_{k \rightarrow +\infty,\,k \in K_m} \;\mathfrak{Q}_{t}(x_{1:t-1}^{k}, \xi_{m})-\mathfrak{Q}_{t}^{k-1}(x_{1:t-1}^{k}, \xi_{m})=0.
\end{equation}
Let us now fix $\varepsilon>0$.
We have shown in the proof of Proposition \ref{convexityrec} that for every
$m \in C(n)$ (with $|C(n)|=M$ finite), the function $\mathfrak{Q}_t(\cdot, \xi_m)$ is continuous
on $\mathcal{X}_1  {\small{\times}} \ldots {\small{\times}} \mathcal{X}_{t-1}$.
Since the sequence $(x_{1:t-1}^{k})_{k \in K_n}$ with $x_{1:t-1}^{k} \in \mathcal{X}_1  {\small{\times}} \ldots {\small{\times}} \mathcal{X}_{t-1}$ 
converges to $x_{[n]}^{*}$, 
there exists $k_{1}$ such that for all $k \in K_n$ and $k \geq k_{1}$, we have for all $m \in C(n)$,
\begin{equation}\label{eqcontQ}
|\mathfrak{Q}_{t}(x_{1:t-1}^k, \xi_{m})-\mathfrak{Q}_{t}(x_{[n]}^{*}, \xi_{m})| \leq \frac{\varepsilon}{10}.
\end{equation}
Now recall that in Lemma \ref{lipqtk}, we obtained the upper bound $M_0$
given by  \eqref{uppboundpitkj} for $\|\pi_{t, k , j}\|$, valid for every $t=2, \ldots,T, j=1,\ldots,M$, and
$k \geq T$. We can assume without loss of generality that $M_0>0$ (if $M_0=0$, it can be replaced by, say, $M_0+1>0$).
Since the sequence $(x_{1:t-1}^{k})_{k \in K_n}$ converges to $x_{[n]}^{*}$,
there exists $k_{2} \in K_n$ with $k_2 \geq T$ such that for $k \geq k_{2}$, we have for every $k \in K_n$
\begin{equation}\label{eqfin}
\|x_{1:t-1}^{k}-x_{[n]}^{*}\|  \leq \frac{\varepsilon}{10 M_0 }.
\end{equation}
Using \eqref{limitchildren0}, for every $m \in C(n)$, there exists $k(3, m) \in K_m$
with $k(3, m) \geq \max(k_1, k_2, T)$
such that for every $k \in K_m$ satisfying $k \geq k(3, m)$, we have
\begin{equation} \label{eqinduction}
0 \leq \mathfrak{Q}_{t}(x_{1:t-1}^{k}, \xi_{m})-\mathfrak{Q}_{t}^{k-1}(x_{1:t-1}^{k}, \xi_{m}) \leq \frac{\varepsilon}{10}.
\end{equation}
From the continuity of $\mathcal{Q}_t$ and the fact that the sequence $(x_{1:t-1}^{k})_{k \in K_n}$ converges to $x_{[n]}^{*}$,
there exists $k_{4}$ such that for $k \in K_n$ and $k \geq k_{4}$, we have
\begin{equation}\label{eqcontQ2}
|\mathcal{Q}_{t}(x_{1:t-1}^k)-\mathcal{Q}_{t}(x_{[n]}^{*})| \leq \frac{\varepsilon}{2}.
\end{equation}
Take now an arbitrary 
$$k \geq \max(k_1, k_2, \max(k(3, m), m \in C(n)), k_4)=\max(\max(k(3, m),m \in C(n)), k_4)$$ with $k \in K_n$.
Let $\tt{Index}(m)$ be such that $\xi_{t \tt{Index}(m)}=\xi_m$. 
Since $k \geq k(3, m)$ for every $m \in C(n)$, 
using the convexity of $\mathfrak{Q}_{t}^{k(3, m)-1}(\cdot, \xi_m)$ and the fact that
$\pi_{t k(3, m) \tt{Index}(m)}$ is a subgradient of convex function
$\mathfrak{Q}_{t}^{k(3, m)-1}(\cdot,\xi_{m})$ at $x_{1:t-1}^{k(3, m)}$,
we have 
\begin{equation} \label{eqfin0}
\begin{array}{l}
\theta_t^k = \mathcal{Q}_t^k(x_{1:t-1}^k)  =  \displaystyle \sup_{p \in \mathcal{P}_t} \;\displaystyle \sum_{m \in C(n)} p_m \Phi_{t {\tt{Index}(m)}} \mathfrak{Q}_{t}^{k-1}(x_{1:t-1}^k, \xi_{m})\\
 \geq   \displaystyle \sup_{p \in \mathcal{P}_t} \;\displaystyle \sum_{m \in C(n)} p_m \Phi_{t {\tt{Index}(m)}} \mathfrak{Q}_{t}^{k(3, m)-1}(x_{1:t-1}^k, \xi_{m})\\
\geq   \displaystyle \sup_{p \in \mathcal{P}_t} \;\displaystyle \sum_{m \in C(n)} p_m \Phi_{t {\tt{Index}(m)}} \left[\mathfrak{Q}_{t}^{k(3, m)-1}(x_{1:t-1}^{k(3, m)}, \xi_{m})  +  \langle \pi_{t k(3, m) \tt{Index}(m)}, x_{1:t-1}^k - x_{1:t-1}^{k(3, m)} \rangle\right].
\end{array}
\end{equation}
This gives
\begin{equation} \label{eqfin1}
\begin{array}{l}
0 \leq \mathcal{Q}_t(x_{1:t-1}^k)-\mathcal{Q}_t^k(x_{1:t-1}^k) \leq \\
\displaystyle \sup_{p \in \mathcal{P}_t} \;\displaystyle \sum_{m \in C(n)} p_m \Phi_{t {\tt{Index}(m)}} \left[ \mathfrak{Q}_t(x_{1:t-1}^k, \xi_{m})-\mathfrak{Q}_t^{k(3, m)-1}(x_{1:t-1}^{k(3,m)}, \xi_{m}) \right] \\
+\displaystyle \sup_{p \in \mathcal{P}_t} \;\displaystyle \sum_{m \in C(n)} p_m \Phi_{t {\tt{Index}(m)}}  \langle \pi_{t k(3, m) \tt{Index}(m)}, x_{1:t-1}^{k(3,m)} - x_{1:t-1}^k \rangle \\
\leq \displaystyle \sup_{p \in \mathcal{P}_t} \;\displaystyle \sum_{m \in C(n)} p_m \Phi_{t {\tt{Index}(m)}} \Big|\mathfrak{Q}_t(x_{1:t-1}^{k}, \xi_{m})-\mathfrak{Q}_t(x_{[n]}^{*}, \xi_{m})\Big|\\
\;\;\; + \displaystyle \sup_{p \in \mathcal{P}_t} \;\displaystyle \sum_{m \in C(n)} p_m \Phi_{t {\tt{Index}(m)}} \Big|\mathfrak{Q}_t(x_{1:t-1}^{k(3, m)}, \xi_{m})-\mathfrak{Q}_t^{k(3,m)-1}(x_{1:t-1}^{k(3, m)}, \xi_{m})\Big|\\
 \;\;\; + \displaystyle \sup_{p \in \mathcal{P}_t} \;\displaystyle \sum_{m \in C(n)} p_m \Phi_{t {\tt{Index}(m)}} \Big|\mathfrak{Q}_t(x_{1:t-1}^{k(3,m)}, \xi_{m})-\mathfrak{Q}_t(x_{[n]}^{*}, \xi_{m})\Big|\\
\;\; +\displaystyle \sup_{p \in \mathcal{P}_t} \;\displaystyle \sum_{m \in C(n)} p_m \Phi_{t {\tt{Index}(m)}} \left[ \|\pi_{t k(3,m) \tt{Index}(m)}\| \Big( \|x_{1:t-1}^k - x_{[n]}^{*}\| + \|x_{1:t-1}^{k(3,m)}-x_{[n]}^*{}\| \Big)\right]\\
\leq \frac{\varepsilon}{2}.
\end{array}
\end{equation}
The last inequality was obtained using relations 
\eqref{eqcontQ}, \eqref{eqfin}, \eqref{eqinduction}, 
 and the fact that $\sum_{m \in C(n)} p_m \Phi_{t {\tt{Index}(m)}}=1$ for any $p \in \mathcal{P}_t$.
In particular, \eqref{eqcontQ} (resp. \eqref{eqfin}) was used to bound from above 
 the third (resp fourth) of the four terms of the right-hand side of the penultimate
 inequality. This is possible since
 \eqref{eqcontQ} (resp. \eqref{eqfin}) which holds for $k \in K_n$ with $k \geq k_1$ (resp. $k \in K_n$ with $k \geq k_2$) was used with
 $k=k(3,m) \geq \max(k_1, k_2)\geq k_1$ satisfying $k(3,m) \in K_m \subset K_n$ 
 (resp. $k=k(3,m) \geq \max(k_1, k_2)\geq k_2$ satisfying $k(3,m) \in K_m \subset K_n$).
  
Combining the above relation \eqref{eqfin1} with \eqref{eqcontQ2}, we have shown that for every $\varepsilon>0$, for every
$k \geq \max(\max(k(3, m), m \in C(n)), k_4)$ with $k \in K_n$ we have
$|\mathcal{Q}_t^{k}(x_{1:t-1}^k)-\mathcal{Q}_t(x_{[n]}^{*})|\leq \varepsilon$. 
Since the node  $n$ was arbitrarily chosen in ${\tt{Nodes}}(t-1)$, we have shown $H(t)$, which achieves the induction step
and the proof of (ii).

(iii) By definition of $\mathfrak{Q}_1^{k-1}$ (see Algorithm 1), we have
$$
\mathfrak{Q}_1^{k-1}(x_{0}, \xi_1)=F_1(x_1^k, \Psi_1)-\mathcal{Q}_2(x_1^k)+\mathcal{Q}_2^{k-1}(x_1^k)
$$
which implies 
\begin{equation}\label{finaliii}
0 \leq \mathfrak{Q}_1(x_{0}, \xi_1)-\mathfrak{Q}_1^{k-1}(x_{0}, \xi_1)\leq \mathcal{Q}_2(x_1^k)-\mathcal{Q}_2^{k-1}(x_1^k).
\end{equation}
From (ii), $H_2(2)$ holds, which, combined with the continuity
of $\mathcal{Q}_2$, gives 
$$\lim_{k \rightarrow+\infty,\,k\in K_{n_1}}\; \mathcal{Q}_{2}^{k}(x_1^k)-\mathcal{Q}_{2}(x_1^k)=0
$$
where $n_1$ is the unique child of the root node (remember that $\xi_1$ is deterministic), i.e., the node associated to the first stage.
Using Lemma \ref{convkm1} with $t=1$ and the continuity of $\mathcal{Q}_{2}$, we obtain that
$\lim_{k \rightarrow+\infty,\,k\in K_{n_1}}\; \mathcal{Q}_{2}(x_1^k)-\mathcal{Q}_{2}^{k-1}(x_1^k)=0$.
Combined with \eqref{finaliii}, we have shown that
$\lim_{k \rightarrow+\infty,\,k\in K_{n_1}}\; \mathfrak{Q}_{1}^{k-1}( x_0, \xi_1  )=\mathfrak{Q}_{1}(x_0, \xi_1)=\mathcal{Q}_1(x_0)$.

It follows that for every $\varepsilon>0$, there exists $k_0 \in K_{n_1}$ such that for every $k \in K_{n_1}$ with
$k \geq k_0$, we have $0 \leq \mathcal{Q}_1(x_0)-\mathfrak{Q}_1^{k-1}(x_0, \xi_1)\leq\varepsilon$.
As a result, for every $k \in \mathbb{N}$ with $k \geq k_0-1$, we have
$$
0 \leq \mathcal{Q}_1(x_0)-\mathfrak{Q}_1^{k}(x_0, \xi_1) \leq \mathcal{Q}_1(x_0)-\mathfrak{Q}_1^{k_0-1}(x_0, \xi_1)\leq \varepsilon,
$$
which shows the convergence of the whole sequence $(\mathfrak{Q}_1^{k}(x_0, \xi_1))_{k \in \mathbb{N}}$ to $\mathcal{Q}_1(x_0)$, i.e.,
the optimal value of the approximate optimization problem solved at the first stage converges
with probability one to the optimal value $\mathcal{Q}_1(x_0)$ of the optimization problem.

Next, consider an accumulation point $x^*$ of the sequence $(x_1^k)_{k \in \mathbb{N}}$.
There exists a set $K$ such that the sequence $(x_1^k)_{k \in K}$ converges to $x^*$.
Recalling that $n_1$ is the node associated to the first stage, we can take 
$K_{n_1}=K$ in the proof of Proposition \ref{propconv1}  and by definition of $x_{[n_{1}]}^*$ we have $x_{[n_{1}]}^*=x^*$.
Also, by definition
of $x_1^k$,
\begin{equation}\label{convfirststage}
\begin{array}{lll}
 f_1(x_1^k, \Psi_1^k) + \mathcal{Q}_2^{k-1}(x_1^k)&=& \mathfrak{Q}_1^{k-1}(x_0, \xi_1^k)=\mathfrak{Q}_1^{k-1}(x_0, \xi_1).
\end{array} 
\end{equation}
Using $H_2(2)$, Lemma \ref{convkm1}, and the continuity of $\mathcal{Q}_2$ on $\mathcal{X}_1$, we have
$$
\lim_{k \rightarrow +\infty,\, k \in K_{n_1}}\,\mathcal{Q}_2^{k-1}(x_1^k)=\mathcal{Q}_2( x_{[n_1]}^*).
$$
Taking the limit in \eqref{convfirststage} when $k \rightarrow +\infty$ with $k \in K_{n_1}$, we obtain
$$
\mathcal{Q}_1(x_0)= f_1(x^*, \Psi_1) + \mathcal{Q}_2(x^* )=F_{1}(x^{*}, \Psi_1).
$$
Since for every $k \in K_{n_1}$,  $x_{1}^k$ is feasible for the first stage problem, so is $x^*$ (due to the lower semicontinuity of $g_1(x_0,\cdot,\Psi_1)$, $X_1(x_0, \xi_1)$ is closed) and 
$x^*$ is an optimal solution to the first stage problem.\hfill
\end{proof}

}\fi

\if{

\section{Complexity analysis}\label{comp}

We start our analysis considering two-stage stochastic programs, i.e., $T=2$.
In this case, following the proof of convergence of Kelley's cutting plane 
algorithm \cite{ruznooknl}[Theorem 7.7], fixing $\varepsilon>0$ and introducing
$$
\mathcal{K}_{\varepsilon}=\{k \in \mathbb{N}\;:\;F_1(x_{0:1}^k, \Psi_1) \geq \mathcal{Q}_1(x_0)+\varepsilon \},
$$
we can show that if $k_1<k_2 \in \mathcal{K}_{\varepsilon}$ then
\begin{equation}\label{complexity1}
\|x_1^{k_2}-x_1^{k_1}\|_2 \geq \frac{\varepsilon}{2 M_0}
\end{equation}
where $M_0$ is an upper bound on the norm of subgradients $\pi_{1 k j}$.
For a bounded set $X$ and $r>0$, let 
$\mathcal{N}_1 \Big( X, r\Big)$ be the maximal number of points
in the set $X$ such that the distance between any two of these points is at least $r$.
From \eqref{complexity1}, it follows that 
$1+\mathcal{N}_1 \Big( X_1(x_0, \xi_1), \frac{\varepsilon}{2 M_0}\Big)$ 
gives an upper bound on the number of iterations required to obtain an $\varepsilon$-optimal
first stage solution. Since $X_1(x_0, \xi_1) \subset \mathbb{R}^{n}$ is bounded, 
let $+\infty>R>0$ be such that 
$B_n(0,R)$ is the ball in $\mathbb{R}^n$
centered at the origin of smallest radius containing $X_1(x_0, \xi_1)$.
Then $1+\mathcal{N}_1 \Big( B_n(0,R), \frac{\varepsilon}{2 M_0}\Big)$ is also 
an upper bound on the number of iterations required to obtain an $\varepsilon$-optimal
first stage solution. To see how this upper bound depends on the dimension $n$ of $x_1$, we
introduce for $0<r \leq R$  the maximal number $\mathcal{N}_1 \Big(X, r\Big)$ of 
non-overlapping balls of radius $r$ than can be placed
in $X$. Recalling definition \eqref{epsfatten} of $\Big[ X\Big]^{\varepsilon}$ for 
$\varepsilon>0$, we have
\begin{equation}\label{complexity2}
\mathcal{N}_1 \Big(X, r\Big)=\mathcal{N}_2 \Big( \Big[ X\Big]^{\frac{r}{2}}, \frac{r}{2}\Big).
\end{equation}
Indeed, since we can find $\mathcal{N}_1 \Big(X, r\Big)$ points in
$X$ such that the distance between any two of these points is at least $r$
and since the balls centered at these points with radius $r/2$ are non-overlapping
balls contained in $\Big[ X\Big]^{\frac{r}{2}}$, we have 
$\mathcal{N}_2 \Big(\Big[ X\Big]^{\frac{r}{2}}, \frac{r}{2}\Big) \geq \mathcal{N}_1 \Big(X, r\Big)$.
Conversly, the centers of any set of non-overlapping
balls of radius $r/2$ contained in $\Big[ X\Big]^{\frac{r}{2}}$ form a set of points
in $X$ such that the distance between any two of these points is at least
$r$ which implies $\mathcal{N}_1 \Big(X, r\Big) \geq \mathcal{N}_2 \Big(\Big[ X\Big]^{\frac{r}{2}}, \frac{r}{2}\Big)$.
We have thus justified that \eqref{complexity2} holds. Now take a set of non-overlapping balls 
of radius $0<\frac{r}{2}\leq R$ contained in the ball $B_n\Big(0,R+\frac{r}{2}\Big)$.
The volume of these non-overlapping balls is bounded from above
by the volume of the ball $B_n\Big(0,R+\frac{r}{2}\Big)$. 
Taking $X=B_n(0,R)$ in \eqref{complexity2} gives
$\mathcal{N}_1 \Big(B_n(0,R), r\Big)=\mathcal{N}_2 \Big( B_n(0, R+\frac{r}{2}), \frac{r}{2}\Big)$
and denoting by
$\omega_n$ the volume of the unit ball $B_n(0,1)$, we obtain for
$\mathcal{N}_1 \Big(B_n(0,R), r\Big)$ the upper bound
\begin{equation}\label{upperboundcomplexity}
E\left[\frac{\omega_n \left( R+\frac{r}{2} \right)^{n}} {\omega_n \left( \frac{r}{2} \right)^n}\right]
=E\left[ \left( 1 +\frac{2R}{r} \right)^{n} \right].
\end{equation}
It follows that
$$
1+E\left[\left(1+\frac{4 R M_0}{\varepsilon}\right)^n  \right]
$$
is an upper bound on the number of iterations required to obtain an $\varepsilon$-optimal
first stage solution. We see that this upper bound increases exponentially with the dimension $n$
of the first stage decision $x_1$.

When $n=1$, we easily see that $\mathcal{N}_2\Big(B_1(0,R+\frac{r}{2}), \frac{r}{2}\Big)=1+E[\frac{2R}{r}]$
which corresponds to upper bound \eqref{upperboundcomplexity} for $n=1$: the upper bound
\eqref{upperboundcomplexity}
is thus tight for $n=1$. For $n=2$, upper bound \eqref{upperboundcomplexity} is not tight anymore. Indeed, for $r=R$
and $n=2$,
we have $\mathcal{N}_1 \Big(B_2(0,R), R\Big)=7$ obtained with the configuration
given in Figure \ref{fig1} whereas upper bound \eqref{upperboundcomplexity} is 9 in this case. 
\begin{figure}
\begin{tabular}{l}
\input{Complexity1.pstex_t}
\end{tabular}
\caption{Maximal number of points with mutual distance at least $R$ in a circle of radius $R$
in $\mathbb{R}^2$.}
\label{fig1}
\end{figure}
More generally, when $n=2$, our conjecture is that 
$\mathcal{N}_2 \Big(B_2(0, R+\frac{r}{2}), \frac{r}{2}\Big)$
can be obtained
taking extremities of non-overlapping equilateral
triangles as shown in Figure \ref{fig2}.
\begin{figure}
\begin{tabular}{l}
\input{Complexity2.pstex_t}
\end{tabular}
\caption{Set of points with mutual distance at least $0<r<R$ in a circle of radius 
$R$ in $\mathbb{R}^2$ ($3r<R<4r$ in the figure above).}
\label{fig2}
\end{figure}
Assuming $kr \leq R <(k+1)r$, i.e., setting $k=E[\frac{R}{r}] \geq 1$,
we place one point at the origin, 
$6 \ell$ new points on the $\ell$-th layer of triangles
for $1 \leq \ell \leq k$ and a number of points from the last $(k+1)$-th layer
that depends on $R$. Let us compute the number of points from this last
layer. Let $\Delta OAE$ be the equilateral triangle with side lengths
$(k+1)r$ and with vertices the origin $O=[0;0]$,
$A=[(k+1)r;0]$, and $E=[\frac{(k+1)r}{2}; \frac{\sqrt{3}}{2}(k+1)r]$ (see Figure \ref{fig2}).
Denoting by $C$ the midpoint of line segment $\overline{AE}$, there is no point from
the $(k+1)$-th layer inside the disk $B_2(0,R)$ if $R<OC=\frac{\sqrt{3}}{2} (k+1) r$.
As a result, the total number of points is
\begin{equation} \label{circleform1}
1+6 \displaystyle \sum_{\ell=1}^{E[\frac{R}{r}]} \ell = 1+3 E[\frac{R}{r}](1+E[\frac{R}{r}]) \mbox{ if }kr \leq R<\frac{\sqrt{3}}{2} (k+1) r
\end{equation}
in this case. We obtain 1+$3k(1+k)$ points and observe that
upper bound \eqref{upperboundcomplexity} is strictly larger:
$E[\Big(1+\frac{2 R}{r}\Big)^2 ] \geq E[\Big(1+2k\Big)^2 ]=1+4k(1+k)>1+3k(1+k)$.

If, on the contrary, $\frac{\sqrt{3}}{2} (k+1) r \leq R < (k+1)r$,
let $B$ and $D$ be the  intersection of $\overline{AE}$ and the circle
of center the origin $O$ and of radius $R$. Note that $B$ and $D$ belong to
the interior of $\overline{AE}$ and that the number of points to add
from the $(k+1)$-th layer is, by symmetry, 6 times the number $P$ of points of
this layer that belong to line segment $\overline{BD}$.
Alternatively, $P$ is $k+2$ minus twice the number $\lceil \frac{AB}{r}\rceil$ of points of this layer 
that are
in the interior of line segment $\overline{AB}$ but different from B. 
It remains to compute $AB$. We have $OA=AE=(k+1)r$, $AC=(k+1)r/2$
and since $\Delta OCA$ is a right triangle with $\angle AOC=\pi/6$, we get
$OC=OA \cos(\pi/6)=\frac{\sqrt{3}}{2}(k+1)r$
and 
using Pythagoras' theorem in $\Delta OBC$,  
$$
\begin{array}{lll}
BC&=&\sqrt{OB^2-OC^2}=\sqrt{R^2 - \frac{3}{4}(k+1)^2 r^2}, \mbox{ and}\\
AB&=&AC-BC=\frac{(k+1)r}{2}-\sqrt{R^2 - \frac{3}{4}(k+1)^2 r^2}.
\end{array}
$$
The total number of points is then
\begin{equation}\label{nbmax}
\begin{array}{lll}
1+6 \displaystyle \sum_{\ell=1}^{E[\frac{R}{r}]} \ell +6\left(k+2-2 \lceil \frac{AB}{r}\rceil  \right)   \\
=1+3 E[\frac{R}{r}](1+E[\frac{R}{r}]) +  6\left(k+2-2\left\lceil \frac{k+1}{2}-\sqrt{\frac{R^2}{r^2} - \frac{3}{4}(k+1)^2} \right\rceil \right)
\end{array}
\end{equation}
which is strictly smaller than upper bound \eqref{upperboundcomplexity} for
$\frac{\sqrt{3}}{2} (k+1) r \leq R < (k+1)r$: 
indeed, for $\frac{\sqrt{3}}{2} (k+1) r \leq R < (k+1)r$ we have
$\frac{AB}{r} = \frac{k+1}{2}-\sqrt{\frac{R^2}{r^2} - \frac{3}{4}(k+1)^2}>0$
(it is clear geometrically, see Figure \ref{fig2})
which implies 
$\left\lceil \frac{k+1}{2}-\sqrt{\frac{R^2}{r^2} - \frac{3}{4}(k+1)^2} \right\rceil \geq 1$ and
\eqref{nbmax} is strictly bounded from above by upper bound \eqref{upperboundcomplexity}:
$$
\begin{array}{lll}
1+3 k (k+1)+6 k  & \leq & 1 + 3(k+1)^2 + 3(k+1)\\
& \leq & 1 + 3(k+1)^2 + E[2 \sqrt{3} (k+1)] =E[(1+\sqrt{3}(k+1))^2] \\
& \leq & E[\Big(1+\frac{2 R}{r}\Big)^2 ] \;\;\;\mbox{since }\frac{\sqrt{3}}{2} (k+1) r \leq R.
\end{array}
$$
Now let $B=\{x \in \mathbb{R}^{n}\;:\; {\underline x} \leq x \leq {\bar x}\}$ be the smallest box
containing $X_1(x_0, \xi_1)$.
Then $1+\mathcal{N}_1 \Big( B, \frac{\varepsilon}{2 M_0}\Big)$ is also 
an upper bound on the number of iterations required to obtain an $\varepsilon$-optimal
first stage so
lution.
Setting $r=\frac{\varepsilon}{2 M_0}$, recalling that
$\mathcal{N}_1 \Big(B, r\Big)=\mathcal{N}_2 \Big( \Big[ B\Big]^{\frac{r}{2}}, \frac{r}{2}\Big)$
and observing that $\Big[ B\Big]^{\frac{r}{2}}\subseteq \{x \in \mathbb{R}^{n}\;: \;{\underline x}-\frac{r}{2} \leq x \leq {\bar x}+\frac{r}{2}\}$, 
we have for $\mathcal{N}_1 \Big(B, r\Big)$ the upper bound
$
E\Big[\frac{ \Pi_{i=1}^{n} ({\bar x}_i - {\underline x}_{i} +r)}  {\omega_n \left( \frac{r}{2}\right)^n}\Big]
$
and in the case when ${\bar x}_i=-{\underline x}_i=R>0$, 
$$
E\Big[\frac{1}{\omega_n}\left[2\left(1+\frac{4 R M_0}{\varepsilon}\right)\right]^n \Big]
$$
is an upper bound-still increasing exponentially with $n$-on the number of
iterations required to obtain an $\varepsilon$-optimal first stage solution.
For $n=2$, the upper bound
$E\Big[\frac{ \Pi_{i=1}^{n} ({\bar x}_i - {\underline x}_{i} +r)}  {\omega_n \left( \frac{r}{2}\right)^n}\Big] =E\Big[\frac{  4 ({\bar x}_2 -{\underline x}_2)({\bar x}_1 -{\underline x}_1 +r)}{\pi r^2}\Big]$
for $\mathcal{N}_1 \Big(B, r\Big)$ can be improved placing 
$$
\left(1+E\left[\frac{{\bar x}_1 -{\underline x}_1}{r}\right]\right)\left(1+E\left[\frac{2({\bar x}_2 -{\underline x}_2)}{\sqrt{3} r}\right]\right)
$$
balls in the box as shown in Figure \ref{fig3}.

\begin{figure}[H]
\begin{center}
\begin{tabular}{l}
\input{Box2.pstex_t}
\end{tabular}
\caption{Set of points with mutual distance at least $r>0$ in a box of  
$\mathbb{R}^2$ of surface $\ell L$.}
\label{fig3}
\end{center}
\end{figure}

}\fi

\section*{Acknowledgments} The author's research was 
partially supported by an FGV grant, CNPq grant 307287/2013-0, 
FAPERJ grants E-26/110.313/2014, and E-26/201.599/2014.
We would like to thank the two anonymous reviewers for their suggestions and comments.

\addcontentsline{toc}{section}{References}
\bibliographystyle{plain}
\bibliography{Risk_Averse_SDDP}

\end{document}